\newtheorem{theorem}{Theorem}[section]
\newtheorem{proposition}[theorem]{Proposition}
\theoremstyle{definition}
\newtheorem{definition}[theorem]{Definition}
\theoremstyle{plain}
\newtheorem{lemma}[theorem]{Lemma}
\newtheorem{corollary}{Corollary}[theorem]
\theoremstyle{remark}
\newtheorem{remark}[theorem]{Remark}
\def\namedlabel#1#2{\begingroup
    #2%
    \def\@currentlabel{#2}%
    \phantomsection\label{#1}\endgroup
}
\newcommand{\vphi}{\varphi}
\DeclareMathOperator*{\supp}{supp}
\newcommand{\intdm}[3]{\displaystyle \int\limits_{#1} #2 \, \mathrm{d}#3}
\newcommand{\iintdm}[4]{\iint\limits_{#1}  #2 \, \mathrm{d}#3 \, \mathrm{d}#4}
\newcommand{\diffqu}{\left( ({\bf u}(\bdx)-{\bf u}(\bdy)) \cdot 
\frac{\bdx-\bdy}{|\bdx-\bdy|}\right)}
\newcommand{\diffqbu}{\left( (\bu(\bx)-\bu(\by)) \cdot \frac{\bx-\by}{|\bx-\by|}\right)}
\newcommand{\diffqbv}{\left( (\bv(\bx)-\bv(\by)) \cdot \frac{\bx-\by}{|\bx-\by|}\right)}
\newcommand{\diffqbunorm}{\left| (\bu(\bx)-\bu(\by)) \cdot \frac{\bx-\by}{|\bx-\by|}\right|}
\newcommand{\diffbux}{\left( \bu(\bx) \cdot \frac{\bx-\by}{|\bx-\by|}\right)}
\newcommand{\diffbuy}{\left( \bu(\by) \cdot \frac{\bx-\by}{|\bx-\by|}\right)}
\newcommand{\diffbvx}{\left( \bv(\bx) \cdot \frac{\bx-\by}{|\bx-\by|}\right)}
\newcommand{\shapetensorbxy}{\left( \frac{\bx-\by}{|\bx-\by|} \otimes \frac{\bx-\by}{|\bx-\by|} \right)}
\newcommand{\HRd}{S(\bbR^d;k)}
\newcommand{\HOmegaRd}{S_{\Omega}(\bbR^d;k)}
\newcommand{\bfs}[1]{\boldsymbol{#1}}
\newcommand{\bdx}{\mathbf{x}}
\newcommand{\bdu}{{\bf u}}
\newcommand{\bdy}{{\bf y}}
\title[Nonlocal systems related to peridynamics]{Solvability of 
nonlocal systems related to peridynamics}
\author{Moritz Kassmann}
\address{
Universit\"at Bielefeld, Fakult\"at für Mathematik, Postfach 100131, D-33501 
Bielefeld, Germany}
\email{moritz.kassmann@uni-bielefeld.de}
\author{Tadele Mengesha}
\address{
Department of Mathematics,
The University of Tennessee, Knoxville, Tennessee 37996, U.S.}
\thanks{T. Mengesha and J. Scott acknowledge the support of the U.S. NSF under grant DMS-1615726. \\
\indent M. Kassmann acknowledges the support of the German Science Foundation through CRC 1283.
}
\email{mengesha@utk.edu}
\author{James Scott}
\address{
Department of Mathematics,
The University of Tennessee, Knoxville, Tennessee 37996, 
U.S.}
\email{jscott66@vols.utk.edu}
\begin{document}
\begin{abstract}
In this work, we study the Dirichlet problem associated with a strongly coupled 
system of nonlocal equations.  The system of equations comes from a 
linearization of a model of peridynamics, a nonlocal model of elasticity. It is 
a nonlocal analogue of the Navier-Lam\'e system of classical elasticity.  The 
leading operator is an integro-differential operator characterized by a 
distinctive matrix kernel which is used to couple differences of components of a 
vector field. The paper's main contributions are proving well-posedness of the 
system of equations and demonstrating optimal local Sobolev regularity of 
solutions.   We apply Hilbert space techniques for well-posedness. The 
result holds for systems associated with kernels that give rise to 
non-symmetric bilinear forms. The regularity result holds for systems with 
symmetric kernels that may be supported only on a cone. For some specific 
kernels associated energy spaces  are shown to coincide with standard  
fractional Sobolev spaces. 
\end{abstract}

\maketitle

\numberwithin{equation}{section}

\section{Introduction}
\noindent We study the Dirichlet problem associated with a nonlocal system of 
equations  

\begin{equation}\label{intro-main-eqn}
\mathbb{ L} {\bf u} = {\bf f} \quad \text{in } \Omega; \quad 
{\bf u} = {\bfs 0} \quad \text{in } \complement  \Omega.
\end{equation}
where the matrix-valued nonlocal operator $\mathbb{L}$ is of the form  

\begin{align}\label{intro-operator}
\mathbb{L} {\bf u}(\bdx) = \lim\limits_{\veps \to 0^+} 
\intdm{|\bdx-\bdy|>\veps}{k(\bdx,\bdy) \shapetensorbxy 
(\bdu(\bdx)-\bdu(\bdy))}{\bdy},
\end{align} 
\noindent when the limit exists. In the above, $\Omega \subset \bbR^d$ denotes 
an open, bounded set with a sufficiently regular boundary, and $\complement 
\Omega$ denotes its 
complement. The functions ${\bf u}$ and ${\bf f}$  are vector fields defined in 
their respective domain. The kernel $k:\bbR^d \times \bbR^d \to [0,\infty]$ is 
measurable.  For given vectors $\ba = (a_1, a_2,\cdots, a_d), $ and $\bb = (b_1, 
b_2,\cdots, b_d)$, the tensor $\ba \otimes \bb$ is the rank-one matrix with 
$a_ib_j$ as its $ij^{th}$ entry.   From the very definition of the nonlocal 
operator $\mathbb{L}$, it is clear that \eqref{intro-main-eqn} is a strongly 
coupled system of equations.  

The goal of this paper is twofold. First, we formulate a variational 
problem  for \eqref{intro-main-eqn}, the resolution of which provides solutions 
to \eqref{intro-main-eqn}. We treat more general kernels than those covered 
in the literature. For  given data ${\bf f}$ in an appropriate 
class, we describe a notion of solution and demonstrate existence of 
vector-valued solutions $\bdu:\bbR^d \to \bbR^d$ to the nonlocal coupled system 
\eqref{intro-main-eqn}. The second goal is to prove some results related to the 
optimal regularity of solutions. This will be carried out for a specific 
class of 
kernels.  

The motivation to study the above system of equations comes from applications. 
Indeed, the system \eqref{intro-main-eqn} is the equilibrium  
equation  in linearized bond-based peridynamics, a nonlocal continuum model that 
has received a lot of attention in recent years 
\cite{Silling,Silling2010,Silling2007}. 
To describe the model,   a body 
occupying $\Omega\subset \mathbb{R}^{d}$ has undergone the deformation that maps 
a material point $\bdx \in \Omega$ to $\bdx + {\bf u}(\bdx)$ in a deformed 
domain. In this case, the vector field ${\bf u}$ represents the displacement 
field. The peridynamic model treats the body as a complex mass-spring system. 
Any two material points $\bdy$ and $\bdx$ are assumed to be interacting through 
a  bond vector ${\bfs \xi} = \bdy-\bdx$. Under the uniform small strain theory 
\cite{Silling2010}, the strain of the bond $\bdy - \bdx$ is given by the 
nonlocal linearized strain 
\[
\mathcal{D}({\bf u})(\bdx, \bdy) = (\bu(\bx)-\bu(\by)) \cdot \frac{\bx-\by}{|\bx-\by|}. 
\]
A portion of this strain contributes to the volume changing component of the deformation and the
remaining is the shape changing component.  According to the linearized 
bond-based peridynamic model  \cite{Silling2010} the balance of forces is given 
by a system of equations that has the same form as \eqref{intro-main-eqn} for 
some appropriate kernel $k$.  The kernel $k$ contains properties of the modeled 
material and represents the strength and extent of interactions between material 
points $\bdx$ and $\bdy$. The kernel $k$ may depend on $\bdx, \bdy$, their 
relative position $\bdy-\bdx$ or, in the case of homogeneous materials, only on 
their relative distance $|\bdy-\bdx|$. For general $k$, the equation may model 
heterogeneous and anisotropic materials.  The operator $\mathbb{L}{\bf u}$ is 
then the linearized internal force density function due to the deformation 
$\bdx\mapsto\bdx + {\bf u}(\bdx)$ and  is  a weighted average of the linearized 
strain function associated with the displacement ${\bf u}$ \cite{Mengesha-Du, 
Silling2010}. Indeed, rewriting \eqref{intro-operator} in terms of the nonlocal 
strain $\mathcal{D}({\bf u})$ we get 
$\mathbb{L} {\bf u}(\bdx) = \lim\limits_{\veps \to 0^+} 
\intdm{|\bdx-\bdy|>\veps}{k(\bdx,\bdy)\mathcal{D}({\bf u})(\bdx, \bdy) \frac{\bx 
- \by}{|\bx-\by|} }{\bdy}$, whenever it exists. 

The usage of the ``projected" difference of ${\bf u}$, $\mathcal{D}({\bf 
u})(\bdx, \bdy)$,  in   $\mathbb{L}$ makes the operator distinct from other 
nonlocal operators that use the full difference ${\bf u}(\bdy) - {\bf u}(\bdx)$. 
 To see this distinction, it suffices to note that for smooth vector fields  
\[
{\mathcal{D}({\bf u})(\bdx, \bdy) \over |\bdy-\bdx|} =  {(\bdy - 
\bdx)^{\intercal} \big( \varepsilon({\bf u})(\bdx)\big)(\bdy-\bdx) \over |\bdy - 
\bdx|^2}   + o(|\by-\bdx|) 
\]
whereas ${ {\bf u}(\bdy) - {\bf u}(\bdx)\over |\by-\bx|}  = \nabla{\bf u}(\bdx) 
{(\by - \bdx) \over |\by-\bx|}  + o(|\bdy-\bdx|)$, where we have used the 
notation $\varepsilon({\bf u})(\bdx)$ to represent the symmetric part of the 
gradient matrix $\frac{1}{2} ( \nabla{\bf u}(\bdx) +  \nabla{\bf 
u}(\bdx)^{\intercal})$, commonly called the strain tensor. The action $\{ 
\}^\intercal$ denotes the transpose. A consequence of this is that the nonlocal 
system \eqref{intro-main-eqn} can be seen as a nonlocal analogue of the 
Dirichlet problem corresponding to the strongly coupled system of 
partial differential equations  
\[
\text{{\bf div}}\, \mathfrak{C}(\bdx) \varepsilon ({\bf u})(\bdx) = {\bf f} \quad \text{in } \Omega; \quad\quad 
{\bf u} = {\bfs 0} \quad \text{in } \partial \Omega\,, 
\] 
where $\mathfrak{C}(\bdx) $ is a fourth--order tensor of bounded coefficients, 
which is not necessarily uniformly elliptic but rather satisfies the weaker 
Legendre-Hadamard condition. Systems of partial differential equations of the 
above type that are commonly used in the theory of linearized elasticity are 
well studied, see \cite{Giaquinta}.  

Our study of the nonlocal system \eqref{intro-main-eqn} begins with a 
mathematically rigorous understanding of the operator $\mathbb{L}$.  The focus 
is to find a large class of kernels $k$ that may not be symmetric ($k(\bdx, 
\bdy) \neq k(\bdy,\bdx)$), may  have singularity along the diagonal $\{(\bdx, 
\bdy)\in \mathbb{R}^{d}\times \mathbb{R}^{d}:  \bdx = \bdy\}$ or degeneracy on 
some directions such that both the operator $\mathbb{L}$ and associated system 
of equations \eqref{intro-main-eqn} make sense. Notice that even for smooth 
functions the limit in \eqref{intro-operator} does not exist in general 
unless we put a condition on $k$. As with partial differential equations in 
divergence form with measurable coefficients, we study variational 
solutions based on quadratic forms. We use use Hilbert space techniques to study 
the Dirichlet problem 
\eqref{intro-main-eqn}. Applicability of harmonic analysis tools is also 
possible when the system of equations is posed over the entire domain 
$\mathbb{R}^{d}$.

To describe some of our results, following \cite{Kassmann, Schilling} let us 
introduce a decomposition of $k(\bdx, \by)$ in terms of its symmetric 
part $k_{s}$ and its anti-symmetric part $k_{a}$. They are given by   
$$
k_s(\bdx,\bdy) = \frac{1}{2}(k(\bdx,\bdy)+k(\bdy,\bdx)), \quad k_a(\bdx,\bdy) = \frac{1}{2}(k(\bdx,\bdy)-k(\bdy,\bdx)).
$$
Throughout the paper we consider kernels whose symmetric part has locally 
integral second moment, i.e., we assume   
\begin{equation}\label{SM}
\bx \mapsto \intdm{\bbR^d}{\min\{1,  |\bx-\by|^2\}k_s(\bx,\by)}{\by} \in L^1_{loc}(\bbR^d).
\end{equation}
We also define the function space of vector fields  
\[
S(\mathbb{R}^{d};k) = \left\{ {\bf v} \in L^2(\mathbb{R}^{d};\mathbb{R}^{d})):  
\mathcal{D}({\bf v})(\bdx, \bdy)k_s^{1/2}(\bdx,\bdy) \in L^2(\mathbb{R}^{d} 
\times \bbR^d) \right\}.   
\]
The mapping 
$
[ \bu,\bv ]_{H(\mathbb{R}^{d};k)} := \iintdm{\mathbb{R}^{d} \, \bbR^d}{k_s(\bx,\by) \mathcal{D}({\bf u})(\bdx, \bdy) \mathcal{D}({\bf v})(\bdx, \bdy)}{\by}{\bx}
$
defines a bilinear form on $S(\mathbb{R}^{d};k)$.  
One can easily show that the function $\|\cdot\|_{\HRd}$, defined via the relation 
\[
\Vnorm{\bv}^2_{\HRd} = \Vnorm{\bv}^2_{L^2(\bbR^d)} + \iintdm{\bbR^d \, \bbR^d}{k_s(\bx,\by) (\mathcal{D}({\bf v})(\bdx, \bdy))^2 }{\by}{\bx},
\]
serves as a norm for $\HRd$. Moreover, adapting the argument used in the proof of  \cite[Lemma 2.3]{Kassmann}, we can actually show that $S(\mathbb{R}^{d};k)$ is a separable Hilbert space  with inner product $(\cdot, \cdot)_{L^2}  +  [ \cdot,\cdot ]_{S(\mathbb{R}^{d};k)}$ . See also similar results in \cite{Du-Zhou2011, Mengesha-Du, Mengesha-Du-Royal}.  We denote the dual space of $S(\mathbb{R}^{d};k)$ by $S^{*}(\mathbb{R}^{d};k)$.

Roughly speaking, we show the following results: for those kernels $k$ whose 
antisymmetric part is small relative to the symmetric part (e.g. the function 
$\bdx \mapsto \int_\mathbb{R}^{d} \frac{(k_a (\bdx, \bdy))^{2}}{k_{s}(\bdx, 
\bdy)} \, \mathrm{d}\bdy $ is uniformly bounded for any ${\bf u}\in S(\mathbb{R}^{d};k)$),  
the limit in \eqref{intro-operator}  exists  in the weak-* topology of the dual 
space $S^{*}(\mathbb{R}^{d};k)$, and therefore $\mathbb{L}{\bf u} \in 
S^{*}(\mathbb{R}^{d};k)$. This interpretation of the operator allows us to 
define a generalized or weak notion of solution to the system of equations in 
\eqref{intro-main-eqn}. The well-posedness of the problem is demonstrated via 
the application of the Lax-Milgram theorem. To this end, we 
introduce a bilinear form on the space $S(\mathbb{R}^{d};k)\times 
S(\mathbb{R}^{d};k)$ that is compatible with the system \eqref{intro-main-eqn}, 
and by imposing additional conditions on $k$ we show that this form is  
continuous and coercive on appropriate subspaces.  Systems of the type 
\eqref{intro-main-eqn} have been studied extensively in the literature, cf. in 
\cite{Max-Rich, Du-NonlocalCalculus, Du-Navier1, Etienne, Mengesha-Du-Royal}. 
Our results complement the well-posedness result in the above 
cited papers.  Indeed, our work deals with kernels that give rise to 
non-symmetric bilinear forms while earlier works are based  on kernels 
associated to symmetric bilinear forms. As we will see in the next section 
clearly, the non-symmetric bilinear forms we study account for the the presence 
of lower order terms that may involve ``lower order fractional" derivatives, 
while the results in \cite{Mengesha-Du-Royal} deal with linear problems with 
lower order terms that involve the unknown function without any derivatives. 


Let us comment on the case where the vector fields are scalar. In this case, 
the quadratic form under consideration becomes a regular Dirichlet form in the 
sense of \cite{FOT11}. For this reason there is an associated strong Markov 
jump process, which can be used to study the Dirichlet problem. In the 
particular case of translation invariant operators, i.e., when $k(\bdx,\bdy)$ 
depends only on $(\bdx-\bdy)$, the process has stationary independent increments 
and is called L\'{e}vy process. The potential theory of Markov jump processes 
including fine properties of heat kernels has been developed in great detail in 
recent years. It can be shown that our notion of a variational solution 
coincides with the probabilistic notion of harmonicity \cite{Che09, MZZ10} if 
the source term ${\bf f}$ vanishes. For the theory of nonlocal non-symmetric 
Dirichlet forms we refer to \cite{Ma, Fukushima, Schilling}. 
In the case of scalar fields, the variational approach to the 
Dirichlet problem has been used by several authors \cite{SeVa13, Kassmann, 
Rut18}. Note that we only comment on nonlocal operators in bounded 
domains which are related to quadratic forms. For a survey of results on 
nonlocal Dirichlet problem in the non-variational context, see \cite{Ros16}. 

Our study of the nonlocal system \eqref{intro-main-eqn} for general kernels follows the variational 
approach taken in \cite{Kassmann} adapting it to the system of equations. This 
adaptation is not trivial because of the structure of the operator. For 
instance, one can easily check that the seminorm $[\bu, \bu]_{S(\mathbb{R}^{d}; 
k)}$ vanishes over a class of affine maps  of the type $\bu (\bdx) = 
\mathbb{B}\bx + {\bf c}$, where $\mathbb{B}$ is skew-symmetric matrix.   When 
proving coercivity of the form over a subspace, one has to find a mechanism to 
remove this large class of maps, as opposed to constants in the case of 
equations. We will see that we need to use fractional Poincar\'e-Korn-type 
inequalities for the system in contrast to the standard fractional Poincar\'e 
inequality for problems involving scalar fields.    

Let us mention that the system arising in \eqref{intro-main-eqn} is related to 
the Euler-Lagrange system generated by fractional harmonic maps. Those systems 
were studied first in \cite{DaRi11} for the half-Laplacian and then extended to 
more general situations \cite{Sch12, DaL13, Sch15, MiSi15, DaSc17}. In these 
works, the systems arise as Euler-Lagrange equations for critical points of 
functionals like $\| (-\Delta)^{\frac{s}{2}} {\bf u} \|_{L^p}$ for ${\bf u} \in 
{\dot{H}}^{s,p}(\mathbb{R}^d; \mathcal{M})$ where $M \subset \mathbb{R}^N$ is a 
smooth closed manifold. Obviously, these systems are nonlinear in general, 
which makes the regularity theory very challenging. However, the systems 
generated by harmonic maps do not possess a strong coupling in the main part of 
the operator as in \eqref{intro-main-eqn}.

In this paper, we also obtain local regularity results for variational solutions 
of the system \eqref{intro-main-eqn} corresponding to a special class of 
kernels. For this aspect of our study, we concentrate on translation invariant operators with kernels of the form  
 $k(\bx, \by) = k(\bx-\by)$, that is even and comparable with the standard kernel of 
fractional order. We allow this comparability to hold true in any double 
cone $\Lambda$ with apex at the origin, i.e. 
\[
k(\bx-\by) \asymp \frac{1}{|\by-\bdx|^{d + 2s}},\quad s\in (0, 1), \quad \bx-\bdy\in \Lambda. 
\]
For these types of kernels we show that the Hilbert space 
$S(\mathbb{R}^{d}; k)$ is equivalent to the standard fractional Sobolev space 
$H^{s}(\mathbb{R}^{d};\mathbb{R}^{d})$.  Such an equivalence will be proved 
using the Fourier transform. See \cite{Fractional-Hardy, Du-Zhou2011} for 
related results.  For such kernels we show that actually the operator 
$\mathbb{L} : H^{2s}(\mathbb{R}^{d};\mathbb{R}^{d}) \to L^{2}(\mathbb{R}^{d}; 
\mathbb{R}^{d})$ is continuous.  More generally, for any $p\in (1,\infty)$,  if 
we define the non-homogeneous potential space 
\[
\mathcal{L}^{2s, p}(\mathbb{R}^{d}; \mathbb{R}^{d}) = \{{\bf u}\in L^{p}(\mathbb{R}^{d}; \mathbb{R}^{d}) : (-\Delta)^{s}{\bf u}\in L^{p}(\mathbb{R}^{d};\mathbb{R}^{d})\} 
\]
where the fractional Laplacian $(-\Delta)^{s}$ is acting on each component, then it can 
be shown that the nonlocal matrix-valued operator 
$\mathbb{L}$ is continuous from $ 
\mathcal{L}^{2s,p}(\mathbb{R}^{d};\mathbb{R}^{d}) \to L^{p}(\mathbb{R}^{d}; 
\mathbb{R}^{d})$.   Most importantly,  we show in this paper that for any $2 
\leq p \leq \frac{2d}{d-2s}$ and ${\bf f}\in L^{p}(\Omega;\mathbb{R}^{d})$, the 
unique variational solution ${\bf u}\in H^{s}(\mathbb{R}^{d};\mathbb{R}^{d})$ to 
the zero Dirichlet problem 
\begin{equation}\label{intro-main-eqn-ZeroD}
\begin{split}
\mathbb{ L} {\bf u} = {\bf f} \quad \text{in } \Omega, \quad 
{\bf u} = 0 \quad \text{in } \complement  \Omega.
\end{split}
\end{equation}
belongs to $\mathcal{L}^{2s, p}_{loc} (\mathbb{R}^{d}; \mathbb{R}^{d})$.  We say 
${\bf u}\in \mathcal{L}^{2s, p}_{loc}(\mathbb{R}^{d}; \mathbb{R}^{d})$ if ${\bf 
u} \eta \in \mathcal{L}^{2s, p} (\mathbb{R}^{d}; \mathbb{R}^{d})$ for any 
$\eta\in C_{c}^{\infty}(\mathbb{R}^{d})$.  For nonlocal equations,  results of 
the above type have been proved in \cite{Biccari-Warma, Peral, Grubb}.   
We follow an approach that is used in \cite{Biccari-Warma, Biccari-Warma-add}, 
where a similar but more general result is proved for the Dirichlet problem for 
fractional Laplacian equation when the right hand side comes from $L^p$ for any 
$1<p<\infty$.  In the case of vector fields, we could not  cover  all ranges of 
$p$ but only with the additional assumption that the weak solution ${\bf u}\in 
L^{p}$. In the scalar case such an assumption is not necessary since it can 
be proven that a solution to the Dirichlet problem of the fractional Laplacian 
with right hand side in $L^{p}$ must also be in $L^{p}$, see \cite[Lemma 
2.5]{Biccari-Warma}. A similar Calderon-Zygmund type estimate for solutions is 
also proved in \cite[Theorem 16]{Peral}. Unfortunately we are unable to extend 
their  proof to the vector-valued case because the argument in  
\cite{Biccari-Warma} relies on a monotonicity property of an associated 
semigroup and in the case of \cite{Peral} it uses a Moser-type argument where a 
nonlinear function of the solution is used as a test function. Neither of these 
arguments can be applied for systems.

The organization of the paper is as follows: In \autoref{sec:var-sol} we 
introduce additional notation, provide some auxiliary results, and show  
well-posedness of the 
Dirichlet problem \eqref{intro-main-eqn} using Hilbert space methods. We 
present sufficient conditions that imply the validity of fractional Poincar\'e-Korn type estimates for a larger class of kernels. 
We also provide examples of kernels 
for which the theorem is applicable. For a smaller class of kernels we also 
link the energy space $S(\mathbb{R}^{d};k)$ with classical Sobolev spaces. In 
\autoref{sec:regularity} we prove higher-order interior 
regularity of solutions to the 
Dirichlet problem corresponding to a particular class of kernels. 

\section{Variational Formulation}\label{sec:var-sol}

\noindent In this section we set up the variational approach to solve 
the system \eqref{intro-main-eqn} 

\subsection{Notations and Definitions}

Through out the paper we will be using the following functions spaces and their 
associated norm. We assume that $D\subset \mathbb{R}^{d}$ an open subset, and 
$\complement D$ denotes its complement.  
We begin with the function spaces
\[
L^p_{D}(\bbR^d) = \{ \bu \in L^p(\bbR^d;\mathbb{R}^d) \, | \, \bu = {\bfs 0} \text{ a.e. on } \complement D \} \\
\]
which collects $L^{p}$ functions defined over $\mathbb{R}^{d}$ that vanish outside of $D$. 
We also use the notation $S_{D}(\mathbb{R}^{d};k)$ to denote the space of 
functions in $\HRd$ that vanish outside of $D$:  
\[
S_{D}(\mathbb{R}^{d};k) = \{ \bu \in \HRd : \bu = {\bfs 0} \text{ a.e. on } \complement D \}. 
\]
It is not difficult to show that $S_{D}(\mathbb{R}^{d};k)$ is a closed subset of $S(\mathbb{R}^{d};k)$
and that from the definition,
$\left( S_{D}(\mathbb{R}^{d};k), \Vnorm{\cdot}_{\HRd} \right) \hookrightarrow \left( \HRd, \Vnorm{\cdot}_{\HRd} \right)$. 
We denote the dual space of $S_{D}(\mathbb{R}^{d};k)$ by $S^{*}_{D}(\mathbb{R}^{d};k)$. 

To set up a variational problem, we will make necessary preparations. To begin 
with, we introduce a bilinear form that will be used to define a generalized 
notion of a solution to the nonlocal systems of equations.  
\begin{definition}\label{Defn-BF}
Given two measurable functions $\bu$ and $\bv$, we define 
\begin{align*}
\cF^k (\bu,\bv)&= \frac{1}{2} \iintdm{\mathbb{R}^{d}\mathbb{R}^{d}}{k_s(\bx,\by) 
\mathcal{D}({\bf u})(\bdx, \bdy) \mathcal{D}({\bf v})(\bdx, \bdy)}{\by}{\bx} \\ 
&\quad +  \iintdm{\mathbb{R}^{d}\mathbb{R}^{d}}{k_a(\bx,\by) \mathcal{D}({\bf 
u})(\bdx, \bdy) \diffbvx}{\by}{\bx},
\end{align*}
whenever the integrals exist. 
\end{definition}
We notice that the form is not necessarily symmetric. 
We aim to find conditions on $k$ that allow us to have good control on the 
quadratic functional $\cF^k (\bu,\bu)$ for $\bu$ in the function space 
$S(\mathbb{R}^{d};k)$. 
To that end, following \cite{Kassmann,Schilling} let us assume that there exists a symmetric kernel $\tilde{k}$ and constants $A_1 \geq 1$, $A_2 \geq 1$ such that for all $\bx\in \mathbb{R}^{d},$ the measure $|\{\by\in \mathbb{R}^{d}:  k_a^2(\bx,\by) \neq 0 \,\text{and}\, \tilde{k}(\bx,\by) = 0\}|=0$,  and 
\begin{equation}\label{k1}
\iintdm{\bbR^d \, \bbR^d}{\tilde{k}(\bx,\by) (\mathcal{D}({\bf u})(\bdx, \bdy))^2}{\by}{\bx} \leq A_1 \|{\bf u}\|^{2}_{\HRd}
\end{equation}
for all $\bu \in S(\bbR^d;k)$, and that
\begin{equation}\label{k2}
\sup_{\bx \in \bbR^d} \intdm{\bbR^d}{\frac{k_a^2(\bx,\by)}{\tilde{k}(\bx,\by)}}{\by} \leq A_2.
\end{equation}
Note that we can choose $\tilde{k}=k_s$, see \cite{Schilling} where it is used for scalar equations.  
The next lemma describes the proper definition of $\cF^k (\bu,\bv)$ and its continuity on $\HRd$. It also clarifies in what sense the operator \eqref{intro-operator} is defined. 

\begin{proposition}\label{prop:WellDefinedLandF}
Let $\Omega \subset \bbR^d$ be open and assume that $k$ satisfies \eqref{SM} and 
\eqref{k1}-\eqref{k2}. For $n\in\mathbb{N}$, define the subset 
$D_n = \{ (\bx,\by) \in \bbR^d \times \bbR^d \, : \, |\bx-\by| > 1/n \}$ and let
\[
{\mathbb L}_n \bu(\bx) = \intdm{|\bx-\by|>1/n}{k(\bx,\by) \mathcal{D}({\bf 
u})(\bdx, \bdy){\by-\bx \over{|\by-\bx|}}}{\by},
\]
\[
\cF^k_n(\bu,\bv)= \iintdm{D_n}{k(\bx,\by) \mathcal{D}({\bf u})(\bdx, \bdy) 
\diffbvx }{\by}{\bx}. 
\]
Then we have that 
\begin{itemize}
\item[i)] $({\mathbb L}_n \bu,\bv)_{L^2(\bbR^d)} = \cF^k_n(\bu,\bv)$ and 
$\lim_{n \to \infty} \cF^k_n(\bu,\bv)= \cF^{k}(\bu,\bv)$ for all $\bu,\bv \in 
C^{\infty}_c(\Omega)$. 
\item[ii)] Moreover, $\cF^k:S(\bbR^d;k) \times S(\bbR^d;k) \to \bbR$ is 
continuous, and thus on $S_{\Omega}(\bbR^d;k)$. 
\end{itemize}
\end{proposition}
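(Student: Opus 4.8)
The plan is to prove (i) and (ii) by routine bookkeeping with Fubini's theorem, Cauchy--Schwarz, and dominated convergence; the only substantive inputs are \eqref{SM} and \eqref{k1}--\eqref{k2}. Two preliminary remarks would come first. Since $k\geq 0$, we have $|k_a(\bx,\by)|\leq k_s(\bx,\by)$ a.e., hence $0\leq k\leq 2k_s$. And $C^{\infty}_c(\Omega)\subset S(\bbR^d;k)$: for $\bdu\in C^{\infty}_c(\Omega)$ one has $|\mathcal{D}(\bdu)(\bdx,\bdy)|\leq\min\{2\|\bdu\|_{\infty},\,\|\nabla\bdu\|_{\infty}|\bdx-\bdy|\}$ and $\mathcal{D}(\bdu)(\bdx,\bdy)=0$ unless $\bdx\in\supp\bdu$ or $\bdy\in\supp\bdu$, so that, using the symmetry $k_s(\bx,\by)=k_s(\by,\bx)$ and boundedness of $\supp\bdu$,
\[
\iintdm{\bbR^d\,\bbR^d}{k_s(\bx,\by)\,\mathcal{D}(\bdu)(\bdx,\bdy)^2}{\by}{\bx}\;\leq\;C\,\int_{\supp\bdu}\!\int_{\bbR^d}\min\{1,|\bx-\by|^2\}\,k_s(\bx,\by)\,\mathrm{d}\by\,\mathrm{d}\bx\;<\;\infty
\]
by \eqref{SM}.

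For (i), fix $\bdu,\bdv\in C^{\infty}_c(\Omega)$. First I would note that $\mathbb{L}_n\bdu$ is well defined a.e.\ and that the identity $(\mathbb{L}_n\bdu,\bdv)_{L^2(\bbR^d)}=\cF^k_n(\bdu,\bdv)$ follows from Fubini's theorem applied to the defining double integral; this is justified because, using $0\leq k\leq 2k_s$, $|\mathcal{D}(\bdu)|\leq 2\|\bdu\|_{\infty}$, $|\diffbvx|\leq\|\bdv\|_{\infty}\mathbf{1}_{\supp\bdv}(\bx)$, and $\mathbf{1}_{\{|\bx-\by|>1/n\}}\leq\max\{1,n^2\}\min\{1,|\bx-\by|^2\}$, the integrand on $D_n$ is dominated by a constant multiple of $\mathbf{1}_{\supp\bdv}(\bx)\min\{1,|\bx-\by|^2\}k_s(\bx,\by)$, which lies in $L^1(\bbR^d\times\bbR^d)$ by \eqref{SM} and boundedness of $\supp\bdv$. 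Next I would symmetrize: writing $k=k_s+k_a$ and applying the change of variables $\bx\leftrightarrow\by$ to the $k_s$-part (permissible by the same domination, and using that $D_n$, $k_s$ and $\mathcal{D}(\bdu)$ are symmetric while $\diffbvx\mapsto-\diffbvy$), then averaging the two resulting expressions, replaces $\diffbvx$ by $\tfrac12\mathcal{D}(\bdv)(\bdx,\bdy)$; the $k_a$-part is already in the form of Definition~\ref{Defn-BF}. This yields
\[
\cF^k_n(\bdu,\bdv)=\tfrac12\iintdm{D_n}{k_s(\bx,\by)\,\mathcal{D}(\bdu)(\bdx,\bdy)\,\mathcal{D}(\bdv)(\bdx,\bdy)}{\by}{\bx}+\iintdm{D_n}{k_a(\bx,\by)\,\mathcal{D}(\bdu)(\bdx,\bdy)\,\diffbvx}{\by}{\bx}.
\]
Finally I pass to the limit $n\to\infty$ by dominated convergence on $\bbR^d\times\bbR^d$ (note $\mathbf{1}_{D_n}\to 1$ a.e.): the first integrand is dominated by $\tfrac12 k_s\big(\mathcal{D}(\bdu)^2+\mathcal{D}(\bdv)^2\big)\in L^1$ (finite since $\bdu,\bdv\in S(\bbR^d;k)$), and the second by $|k_a\,\mathcal{D}(\bdu)\,\diffbvx|$, which is integrable by the estimate in (ii) below. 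Hence $\cF^k_n(\bdu,\bdv)\to\cF^k(\bdu,\bdv)$.

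For (ii), let $\bdu,\bdv\in S(\bbR^d;k)$ be arbitrary. Cauchy--Schwarz controls the symmetric part:
\[
\iintdm{\bbR^d\,\bbR^d}{k_s\,|\mathcal{D}(\bdu)\,\mathcal{D}(\bdv)|}{\by}{\bx}\leq\Big(\iintdm{\bbR^d\,\bbR^d}{k_s\,\mathcal{D}(\bdu)^2}{\by}{\bx}\Big)^{1/2}\Big(\iintdm{\bbR^d\,\bbR^d}{k_s\,\mathcal{D}(\bdv)^2}{\by}{\bx}\Big)^{1/2}\leq\|\bdu\|_{\HRd}\|\bdv\|_{\HRd}.
\]
For the antisymmetric part I use $|\diffbvx|\leq|\bdv(\bx)|$, the a.e.\ factorization $|k_a\,\mathcal{D}(\bdu)\,\diffbvx|=\big(\tfrac{|k_a|}{\tilde k^{1/2}}|\diffbvx|\big)\big(\tilde k^{1/2}|\mathcal{D}(\bdu)|\big)$ (legitimate since $k_a=0$ a.e.\ on $\{\tilde k=0\}$ by hypothesis), Cauchy--Schwarz, and then \eqref{k2} and \eqref{k1} respectively:
\[
\iintdm{\bbR^d\,\bbR^d}{|k_a\,\mathcal{D}(\bdu)\,\diffbvx|}{\by}{\bx}\leq\Big(\iintdm{\bbR^d\,\bbR^d}{\tfrac{k_a^2(\bx,\by)}{\tilde k(\bx,\by)}\,|\bdv(\bx)|^2}{\by}{\bx}\Big)^{1/2}\Big(\iintdm{\bbR^d\,\bbR^d}{\tilde k\,\mathcal{D}(\bdu)^2}{\by}{\bx}\Big)^{1/2}\leq (A_1A_2)^{1/2}\,\|\bdv\|_{L^2(\bbR^d)}\,\|\bdu\|_{\HRd}.
\]
In particular both integrals in Definition~\ref{Defn-BF} converge absolutely, so $\cF^k$ is well defined on $S(\bbR^d;k)\times S(\bbR^d;k)$, and, using $\|\bdv\|_{L^2(\bbR^d)}\leq\|\bdv\|_{\HRd}$,
\[
|\cF^k(\bdu,\bdv)|\;\leq\;\big(\tfrac12+(A_1A_2)^{1/2}\big)\,\|\bdu\|_{\HRd}\,\|\bdv\|_{\HRd},
\]
which is the asserted continuity on $S(\bbR^d;k)$. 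Since $\big(S_\Omega(\bbR^d;k),\|\cdot\|_{\HRd}\big)$ embeds isometrically into $\big(S(\bbR^d;k),\|\cdot\|_{\HRd}\big)$, continuity on $S_\Omega(\bbR^d;k)$ is immediate.

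The symmetrization step and the two applications of dominated convergence are routine; the part I expect to require the most care is the rigorous justification of Fubini and of the absolute convergence of the truncated and limiting integrals involving the antisymmetric kernel. No usable pointwise bound on $k_a$ is available near the diagonal, so every estimate involving $k_a$ must be routed through the splitting $k_a=(k_a/\tilde k^{1/2})\,\tilde k^{1/2}$ together with \eqref{k1}--\eqref{k2}; and since \eqref{SM} is only a \emph{local} second-moment condition, every bound on the $k_s$-terms has to exploit the compact support of the test functions (or, for the limiting form $\cF^k$, membership in $S(\bbR^d;k)$).
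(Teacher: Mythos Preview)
Your proof is correct and follows essentially the same route as the paper: Fubini to identify $(\mathbb{L}_n\bdu,\bdv)_{L^2}$ with $\cF^k_n(\bdu,\bdv)$, symmetrization of the $k_s$-part, dominated convergence with majorants built from Cauchy--Schwarz and the $\tilde{k}$-splitting \eqref{k1}--\eqref{k2}, and then Cauchy--Schwarz again for continuity. Your bookkeeping is in fact slightly more careful than the paper's (you spell out why $C^\infty_c(\Omega)\subset S(\bbR^d;k)$, give an explicit integrable majorant on $D_n$ via $\mathbf{1}_{\{|\bx-\by|>1/n\}}\leq\max\{1,n^2\}\min\{1,|\bx-\by|^2\}$, and correctly observe that the $k_a$-term cannot be controlled via $|k_a|\leq k_s$ and \eqref{SM} alone), and your continuity constant $(A_1A_2)^{1/2}$ is a bit sharper than the paper's $\max\{A_1,A_2\}$.
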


\begin{proof}
We begin by noticing that  if $\bu \in C^{\infty}_c(\bbR^d)$, the expression 
${\mathbb L}_{n}{\bu} (\bx)$ is finite for almost all $\bx\in \mathbb{R}^{d}$. 
This follows from the fact that  for almost all  $(\bx, \by) \in 
\mathbb{R}^{d\times d}$, $k(\bx,\by) \leq k_s(\bx,\by) $, assumption \eqref{SM}, 
and that the integration is over $D_n$. Similarly, for $\bu,\bv \in 
C^{\infty}_c(\bbR^d)$, $\cF^k_n(\bu,\bv)$ is finite as well. 

Now, for $\bu,\bv \in C^{\infty}_c(\bbR^d)$ we have by Fubini's theorem that 
\begin{align*}
({\mathbb L}_n \bu,\bv)_{L^2(\bbR^d)} &= \intdm{\bbR^d}{ \left[ \intdm{\complement 
B(\bx,1/n)}{k(\bx,\by) \shapetensorbxy (\bu(\bx)-\bu(\by))}{\by} \right] \, 
\bv(\bx)}{\bx} \\
&= \iintdm{D_n}{k(\bx,\by) \diffqbu \diffbvx}{\by}{\bx}.
\end{align*}
Split the last integral using the decomposition of $k$ into $k_s$ and $k_a$, and 
interchange $\bx$ and $\by$ to obtain that 
\begin{equation}\label{PV-for-energy}
\begin{split}
({\mathbb L}_n \bu,\bv)_{L^2(\bbR^d)} &= \frac{1}{2} \iintdm{D_n}{k_s(\bx,\by) 
\mathcal{D}({\bf u})(\bdx, \bdy) \mathcal{D}({\bf v})(\bdx, \bdy)}{\by}{\bx} \\
&+ \iintdm{D_n}{k_a(\bx,\by) \mathcal{D}({\bf u})(\bdx, \bdy) 
\diffbvx}{\by}{\bx}.
\end{split}
\end{equation}
We will be using Lebesgue dominated convergence theorem to pass to the limit in 
both term in \eqref{PV-for-energy}. To  pass to the limit in the first term we 
use the function $(\bx,\by) \mapsto {k_s(\bx,\by) \mathcal{D}({\bf u})(\bdx, 
\bdy) \mathcal{D}({\bf v})(\bdx, \bdy)}$ as a majorant. It  is integrable and by 
Cauchy-Schwarz inequality,
\begin{equation}\label{sym-bil}
\begin{split}
 \iintdm{\mathbb{R}^{d}\mathbb{R}^{d}}{k_s(\bx,\by) \mathcal{D}({\bf u})(\bdx, 
\bdy) \, \mathcal{D}({\bf v})(\bdx, \bdy)}{\by}{\bx} \leq 
[\bu,\bu]_{H(\mathbb{R}^{d};k)}[\bv,\bv]_{H(\mathbb{R}^{d};k)} < \infty 
\end{split}
\end{equation}
 due to \eqref{SM}, since $\bu,\bv \in S(\mathbb{R}^{d}, k)$. We next 
bound the integrand in the second term in \eqref{PV-for-energy} as follows. For 
$\bx, \by\in \mathbb{R}^{d}$, using Young's inequality we have that 
\[
\begin{split}
k_a(\bx,\by) \mathcal{D}({\bf u})(\bdx, \bdy) &\diffbvx \leq |k_a(\bx,\by)| 
\tilde{k}^{-1/2}(\bx,\by)|\mathcal{D}({\bf u})(\bdx, \bdy)| \,  |\bv(\bx)| 
\tilde{k}^{1/2}(\bx,\by)\\
&\leq\frac{1}{2}\left(\bv(\bx)^2\frac{k_a^2(\bx,\by)}{\tilde{k}(\bx,\by)} + 
\tilde{k}(\bx,\by) |\mathcal{D}({\bf u})(\bdx, \bdy)|^2 \right), 
\end{split}
\]
where assumption \eqref{k1}-\eqref{k2} guarantees that both functions in the 
right hand side are integrable in the product space 
$\mathbb{R}^{d}\times\mathbb{R}^{d}$.  It is now clear that
\[
\lim_{n\to \infty} ({\mathbb L}_n \bu,\bv)_{L^2(\bbR^d)} = \cF^k(\bu, \bv).
\]
To prove the continuity of the bilinear form $\cF^k : S(\mathbb{R}^{d};k)\times 
S(\mathbb{R}^{d};k) \to \mathbb{R}^{d}$ we estimate the two terms of $\cF^k$ 
separately. As has been shown in \eqref{sym-bil}, the first term of $\cF^k(\bu, 
\bv) $ cannot exceed ${1\over 2} 
[\bu,\bu]_{S(\mathbb{R}^{d};k)}[\bv,\bv]_{S(\mathbb{R}^{d};k)}$. To estimate the 
second term, we use  
 \eqref{k1}-\eqref{k2} with $A = \max\{A_1,A_2\}$ and the Cauchy-Schwarz 
inequality to obtain that 
\begin{align*}
\iint\limits_{\mathbb{R}^{d} \, \mathbb{R}^{d}}  k_a (\bx,\by) \mathcal{D}({\bf 
u})(\bdx, \bdy) &\diffbvx \, \mathrm{d}\by \, \mathrm{d}\bx \\
& \leq \iintdm{\mathbb{R}^{d}\, \mathbb{R}^{d}}{|k_a(\bx,\by)| 
\tilde{k}^{-1/2}(\bx,\by) |\mathcal{D}({\bf u})(\bdx, \bdy)| |\bv(\bx)| 
\tilde{k}^{1/2}(\bx,\by)}{\by}{\bx} \\
& \small{\leq \left( \intdm{\bbR^d}{\bv(\bx)^2 
\intdm{\mathbb{R}^{d}}{\frac{k_a^2(\bx,\by)}{\tilde{k}(\bx,\by)}}{\by}}{\bx} 
\right)^{1/2} \left( \iintdm{\mathbb{R}^{d}\mathbb{R}^{d}}{\tilde{k}(\bx,\by) 
(\mathcal{D}({\bf u})(\bdx, \bdy))^2}{\by}{\bx} \right)^{1/2}} \\
& \leq A \Vnorm{\bv}_{L^2(\bbR^d)} \|{\bf u}\|_{\HRd}.
\end{align*}
Combining the above estimates we have that  
\begin{equation}\label{ContinuityofEEstimates}
|\cF^k(\bu, \bv)|  \leq  {1\over 2}[\bu,\bu]_{S(\mathbb{R}^{d};k)} 
[\bv,\bv]_{S(\mathbb{R}^{d};k)}  + A  \Vnorm{\bu}_{\HRd} 
\Vnorm{\bv}_{L^2(\bbR^d)}  \leq C \Vnorm{\bu}_{\HRd} \Vnorm{\bv}_{\HRd},
\end{equation}proving that $\cF^k$ is indeed a continuous bilinear form on the 
space $S(\mathbb{R}^{d};k)$. 
\end{proof}

\begin{remark}
A discussion on the nature of the ``limiting operator" ${\mathbb L} = \lim_{n\to 
\infty} {\mathbb L}_{n}$ is in order.  First, in the event that the {\em kernel 
$k(\bx, \by)$ is integrable in the sense that if for every $\bx \in \bbR^d$, 
$\intdm{\bbR^d}{k_s(\bx,\by)}{\by} < \infty$ and   the function $\bx \mapsto 
\intdm{\bbR^d}{k_s(\bx,\by)}{\by} \in L^1_{loc} (\bbR^d)$}, then for any $\bu\in 
S(\mathbb{R}^{d};k)$ and for each $n \in \mathbb{N}$, the value ${\mathbb 
L}_{n}\bu(\bx)$ is finite for almost all $\bx\in \mathbb{R}^{d}$ and for almost 
all $\bx\in \mathbb{R}^{d}$ we have
\[
\begin{split}
\lim_{n\to \infty} {\mathbb L}_{n}\bu(\bx) = {\mathbb L}\bu(\bx)&= 
\intdm{\mathbb{R}^{d}}{k(\bx,\by) \mathcal{D}({\bf u})(\bdx, \bdy){\bx-\by 
\over{|\bx-\by|}}}{\by} \\
&=\intdm{\mathbb{R}^{d}}{k_{s}(\bx,\by) \mathcal{D}({\bf u})(\bdx, \bdy){\bx-\by 
\over{|\bx-\by|}}}{\by} +\intdm{\mathbb{R}^{d}}{k_{a}(\bx,\by) \mathcal{D}({\bf 
u})(\bdx, \bdy){\bx-\by \over{|\bx-\by|}}}{\by}. 
\end{split}
\]
Moreover, the above proposition implies that the sequence $\{{\mathbb 
L}_{n}\bu\}$ is bounded in the dual space of $S(\mathbb{R}^{d};k)$, and 
converges in the weak-* topology to $\cF^{k}(\bu, \cdot)$. In this case, since 
for any $\bv\in C_c^{\infty}(\mathbb{R}^{d})$ one can verify using Fubini's 
theorem that 
\[
({\mathbb L}\bu, \bv)_{L^{2}} = \cF^{k}(\bu, \bv) 
\]
we can identify $\cF^{k}(\bu, \cdot)$ with the measurable vector field ${\mathbb 
L}\bu$.  

More generally, for any kernel satisfying \eqref{k1}-\eqref{k2} and \eqref{SM}, 
and for any  $\bu\in S(\mathbb{R}^{d};k)$ one may replace the $L^2$ inner 
product by the duality pairing to define the sequence of functionals $ 
\mathbb{L}_{n} u $ defined by 
\[
\begin{split}
\langle {\mathbb L}_{n} \bu, \bv \rangle &:= \frac{1}{2} 
\iintdm{D_n}{k_s(\bx,\by) \mathcal{D}({\bf u})(\bdx, \bdy) \mathcal{D}({\bf 
v})(\bdx, \bdy)}{\by}{\bx} \\
&\quad + \iintdm{D_n}{k_a(\bx,\by) \mathcal{D}({\bf 
u})(\bdx, \bdy) \diffbvx}{\by}{\bx},
\end{split}
\] 
for $ \bv\in S(\mathbb{R}^{d}, k).$ 
The proposition proved above shows that $\{{\mathbb L}_{n}\bu\}$ is bounded in 
the dual space of $S(\mathbb{R}^{d}, k)$ and converges in the weak-* topology to 
$\cF^{k}(\bu, \cdot)$.  For $\bu\in C_c^{\infty}(\mathbb{R}^{n})$, the limiting 
functional $\cF^{k}(\bu, \cdot)$ can be identified with the function  
\[
\begin{split}
{\mathbb L}\bu(\bx)&= P.V. \intdm{\mathbb{R}^{d}}{ k_{s}(\bx, \by) 
\mathcal{D}({\bf u})(\bdx, \bdy){\bx-\by \over{|\bx-\by|}}}{\by} 
+\intdm{\mathbb{R}^{d}}{ k_{a}(\bx,\by)\mathcal{D}({\bf u})(\bdx, \bdy) {\bx-\by 
\over{|\bx-\by|}}}{\by}.
\end{split}
\]
In the event that $k$ is not integrable and not necessarily symmetric, the 
second term in the above expression corresponds to a term with ``lower order 
derivatives"; see \cite{Kassmann} for a detailed discussion.
\end{remark}

\subsection{The Dirichlet problem of system of nonlocal equations}
In this subsection we use the bilinear form we introduced earlier to define 
what we mean by a variational solution to the Dirichlet problem of the nonlocal 
system of equations.   
\subsubsection{Zero Dirichlet data}
\begin{definition}
Assume that $k$ satisfies both \eqref{SM} and \eqref{k1}-\eqref{k2}. Let $\Omega 
\subset \bbR^d$ be open, bounded. Let ${\bf f} \in S^*_{\Omega}(\bbR^d;k)$.
We say  that $\bu \in \HOmegaRd$ is a solution of
\begin{equation}
\mathbb{ L}\bu = {\bf f}  \quad \text{in } \Omega, \quad\quad
\bu = {\bfs 0}  \quad \text{on } \complement \Omega,
\tag{$D_0$}
\end{equation}
if 
\begin{equation} \label{weak-pairing}
\cF^k(\bu,{\bfs \vphi}) = ({\bf f} ,{\bfs \vphi})_{L^2(\bbR^d)} \quad \text{for 
all } {\bfs \vphi} \in \HOmegaRd.
\end{equation}
\end{definition}
The main result of this section is the following well-posedness of the Dirichlet 
problem $(D_0)$.  
\begin{theorem}\label{thm:SolutionofD0}
Let $\Omega \subset \bbR^d$ be open, bounded. Let $k$ satisfy \eqref{SM} and 
\eqref{k1}-\eqref{k2}. Assume further that 
\begin{itemize}
\item[i)] there exists $C_P \geq 1$ such that for all $\bu \in 
L^2_{\Omega}(\bbR^d)$,
\begin{equation}
\Vnorm{\bu}^2_{L^2(\Omega)} \leq C_P \iintdm{\bbR^d \, \bbR^d}{k_s(\bx,\by) 
(\mathcal{D}({\bf u})(\bdx, \bdy))^2}{\by}{\bx},\, \text{ and } \tag{$PK$}
\end{equation}
\item[ii)] for every $\epsilon>0$, there exists $C_{\epsilon} \geq 0$ such that
\begin{equation}\label{C1}
C_{\epsilon } = \sup_{\bx \in \Omega} \intdm{\complement 
B(\bx,\veps)}{|k_{a}(\bdx, \bdy)|}{\by}  < \infty, \, \text{ and }
\end{equation}
\item [iii)]
\begin{equation}\label{C2}
\inf_{\bx \in \bbR^d} \liminf_{\veps \to 0^+} \intdm{\complement 
B(\bx,\veps)}{k_a(\bx,\by) \shapetensorbxy }{\by} \geq 0
\end{equation}
in the sense of quadratic forms.
\end{itemize}
Then corresponding to any ${\bf f} \in S^{\ast}_{\Omega}(\bbR^d;k)$ 
 there exists a unique solution $\bu \in \HOmegaRd$ to $(D_0)$. Moreover, there 
exists a constant $c>0$ such that 
 \[
 [{\bf u}, {\bf u}]_{\HOmegaRd} \leq c \|{\bf 
f}\|_{S^{*}_{\Omega}(\mathbb{R}^{d}; k)}.
 \]
\end{theorem}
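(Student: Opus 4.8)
The plan is to produce the solution via the Lax--Milgram theorem, applied to the bilinear form $\cF^k$ on the space $\HOmegaRd$ equipped with the inner product $[\cdot,\cdot]_{\HRd}$. First I would note that this makes $\HOmegaRd$ a Hilbert space: trivially $[\bu,\bu]_{\HRd}\le\Vnorm{\bu}^2_{\HRd}$, and conversely the fractional Poincar\'e--Korn inequality $(PK)$ gives, for $\bu\in\HOmegaRd\subset L^2_\Omega(\bbR^d)$, that $\Vnorm{\bu}^2_{L^2(\bbR^d)}=\Vnorm{\bu}^2_{L^2(\Omega)}\le C_P\,[\bu,\bu]_{\HRd}$, so $\Vnorm{\cdot}_{\HRd}$ and $[\cdot,\cdot]_{\HRd}^{1/2}$ are equivalent norms on $\HOmegaRd$; since $\HOmegaRd$ is closed in $\HRd$, it is complete. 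Continuity of $\cF^k$ for this norm is then immediate from Proposition~\ref{prop:WellDefinedLandF}(ii) together with \eqref{ContinuityofEEstimates}, and by definition every ${\bf f}\in S^{*}_{\Omega}(\bbR^d;k)$ is a bounded linear functional ${\bfs\vphi}\mapsto({\bf f},{\bfs\vphi})_{L^2(\bbR^d)}$ on $\HOmegaRd$. Thus everything reduces to establishing coercivity.

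For coercivity, using Definition~\ref{Defn-BF} I would write, for $\bu\in\HOmegaRd$,
\[
\cF^k(\bu,\bu)=\tfrac12[\bu,\bu]_{\HRd}+T(\bu),\qquad T(\bu):=\iintdm{\bbR^d\,\bbR^d}{k_a(\bx,\by)\,\mathcal{D}(\bu)(\bx, \by)\,\diffbux}{\by}{\bx},
\]
the integral for $T(\bu)$ being absolutely convergent for $\bu\in\HOmegaRd$ by the estimates in the proof of Proposition~\ref{prop:WellDefinedLandF}. The goal is to show $T(\bu)\ge0$; this gives $\cF^k(\bu,\bu)\ge\tfrac12[\bu,\bu]_{\HRd}$, i.e.\ coercivity with constant $\tfrac12$. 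I would prove $T(\bu)\ge0$ by symmetrization. Under the interchange $\bx\leftrightarrow\by$, $\mathcal{D}(\bu)(\bx,\by)$ is unchanged and $\bu(\bx)\cdot\tfrac{\bx-\by}{|\bx-\by|}$ becomes $-\,\bu(\by)\cdot\tfrac{\bx-\by}{|\bx-\by|}$, while $k_a(\by,\bx)=-k_a(\bx,\by)$; averaging $T(\bu)$ with its relabelling gives
\[
T(\bu)=\tfrac12\iintdm{\bbR^d\,\bbR^d}{k_a(\bx,\by)\,\mathcal{D}(\bu)(\bx, \by)\left((\bu(\bx)+\bu(\by))\cdot\tfrac{\bx-\by}{|\bx-\by|}\right)}{\by}{\bx}=\tfrac12\iintdm{\bbR^d\,\bbR^d}{k_a(\bx,\by)\left[\diffbux^2-\diffbuy^2\right]}{\by}{\bx}.
\]
I would then truncate to $D_n=\{|\bx-\by|>1/n\}$ and pass $n\to\infty$ by dominated convergence (the integrand is majorized, uniformly in $n$, by an integrable function built from Young's inequality and \eqref{k1}--\eqref{k2}, as in the proof of Proposition~\ref{prop:WellDefinedLandF}); on each $D_n$ the two squares may be separated, since \eqref{C1} yields $\iintdm{D_n}{|k_a(\bx,\by)|\,|\bu(\bx)|^2}{\by}{\bx}\le C_{1/n}\Vnorm{\bu}^2_{L^2(\Omega)}<\infty$, and a further relabelling $\bx\leftrightarrow\by$ shows the $\diffbuy^2$ piece equals the negative of the $\diffbux^2$ piece. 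This produces
\[
T(\bu)=\lim_{n\to\infty}\intdm{\Omega}{\bu(\bx)^{\intercal}\left(\,\intdm{\complement B(\bx,1/n)}{k_a(\bx,\by)\shapetensorbxy}{\by}\right)\bu(\bx)}{\bx},
\]
and \eqref{C2} says exactly that the inner matrix is nonnegative as a quadratic form in the limit $n\to\infty$ at every $\bx$, so the integrands have nonnegative lower limit; a Fatou-type argument (using \eqref{C1} to supply the needed lower bound and the fact that the limit already exists) gives $T(\bu)\ge0$.

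Once continuity and coercivity are in hand, the Lax--Milgram theorem produces a unique $\bu\in\HOmegaRd$ with $\cF^k(\bu,{\bfs\vphi})=({\bf f},{\bfs\vphi})_{L^2(\bbR^d)}$ for all ${\bfs\vphi}\in\HOmegaRd$, that is, a unique solution of $(D_0)$. Taking ${\bfs\vphi}=\bu$ and using coercivity, $\tfrac12[\bu,\bu]_{\HRd}\le\cF^k(\bu,\bu)=({\bf f},\bu)_{L^2(\bbR^d)}\le c_0\,\Vnorm{{\bf f}}_{S^{*}_{\Omega}(\bbR^d;k)}[\bu,\bu]_{\HRd}^{1/2}$, hence $[\bu,\bu]_{\HOmegaRd}^{1/2}\le c\,\Vnorm{{\bf f}}_{S^{*}_{\Omega}(\bbR^d;k)}$, which is the asserted a priori estimate.

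The step I expect to be the main obstacle is the coercivity argument, and within it the final passage to the limit with the correct sign. The symmetrized integrand is globally integrable, but its two pieces need not be separately integrable — an antisymmetric kernel can be non-integrable near the diagonal — so the splitting and the reduction to a pointwise quadratic form only go through on each truncation $D_n$; one is then left to justify, with the correct sign, that
\[
T(\bu)=\lim_{n\to\infty}\intdm{\Omega}{\bu(\bx)^{\intercal}M_{1/n}(\bx)\bu(\bx)}{\bx},\qquad M_\veps(\bx)=\intdm{\complement B(\bx,\veps)}{k_a(\bx,\by)\shapetensorbxy}{\by},
\]
knowing only that the integrand has nonnegative lower limit by \eqref{C2} and without an obvious uniform integrable minorant for a naive application of Fatou's lemma. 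A clean way to handle this is to establish the coercivity inequality first for $\bu\in C_c^\infty(\Omega)$, where the relevant integrals are more tractable, and then extend it to all of $\HOmegaRd$ by density — this is where the regularity assumed on $\partial\Omega$ enters — using the continuity of $\cF^k$ and of $[\cdot,\cdot]_{\HRd}$.
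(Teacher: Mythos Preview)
Your proposal is correct and follows essentially the same route as the paper: Lax--Milgram on $\HOmegaRd$, continuity from Proposition~\ref{prop:WellDefinedLandF}, and coercivity via the symmetrization of the $k_a$-term on the truncations $D_n$, using \eqref{C1} to justify Fubini/splitting and \eqref{C2} for the sign. The paper carries out exactly these steps and passes to the limit without further comment; your extra scrutiny of the Fatou-type passage (and the density workaround you propose) is a reasonable way to make rigorous a step the paper treats somewhat loosely.
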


\begin{remark}
Condition $(PK)$ in the theorem is called a Poincar\'e-Korn type inequality. In 
the theorem it appears as an assumption that restricts the choice of the kernel 
$k$. Later, we provide sufficient conditions that guarantee the validity of 
$(PK)$ for  a class of kernels. Conditions \eqref{C1}-\eqref{C2} should be 
treated as cancellation conditions on the antisymmetric part of the kernel. 
Indeed, condition \eqref{C1} is an integrability requirement on $k_a$ away from 
the diagonal which allows us to apply Fubini's theorem and use other properties 
of the integral.    
Condition \eqref{C2} on the other hand says that the term in the energy 
$\cF^{k}[{\bf u},{\bf u}]$ involving the antisymmetric part $k_{a}(\bdx, \bdy)$ 
should  not be ``too negative." This condition can be relaxed a little bit, but 
verifying it may be a challenge. A relaxed condition is given in \cite[Remark 
3.3]{Kassmann}. See also nonlocal variational problems that involve sign 
changing kernels in a different sense in \cite{Mengesha-Du-Royal}.
\end{remark}

\begin{proof}[Proof of \autoref{thm:SolutionofD0}] 
We use the Lax-Milgram theorem to prove the result.  Conditions 
\eqref{C1}-\eqref{C2} will be used to show that $\cF^{k}[{\bf u},{\bf u}]$ is 
positive semidefinite, while $(PK)$ implies positive definiteness of the 
energy. 
We show step by step that all the assumptions in the Lax-Milgram theorem are 
satisfied. We begin by noting that as in \autoref{prop:WellDefinedLandF} the 
conditions \eqref{SM}, \eqref{k1}-\eqref{k2} imply that the bilinear form 
$\mathcal{F}^{k}$ is a continuous form on $S(\mathbb{R}^{d};k)$.  Next, we will 
show that $\mathcal{F}^{k}$ is coercive on the closed subspace 
$S_{\Omega}(\mathbb{R}^{d}; k)$ of $S(\mathbb{R}^{d}; k)$. 
We begin by showing that   
\begin{equation}\label{eq:PositiveDefinitenessofE}
\cF^k(\bu,\bu) \geq {1\over 2}[\bu,\bu]_{\HRd} \quad \text{for all } \bu \in 
\HOmegaRd.
\end{equation}
For any  $\veps > 0$, and for any $\bu \in \HOmegaRd$ we have that 
\begin{align*}
&\iintdm{|\bx-\by|>\veps}{\mathcal{D}({\bf u})(\bdx, \bdy) \diffbuy 
k_a(\bx,\by)}{\by}{\bx}\\&= 
\frac{1}{2}\iintdm{|\bx-\by|>\veps}{\mathcal{D}({\bf u})(\bdx, \bdy) \left( 
(\bu(\bx)+\bu(\by)) \cdot \frac{\bx-\by}{|\bx-\by|} \right) 
k_a(\bx,\by)}{\by}{\bx} \\
&= \frac{1}{2}\iintdm{|\bx-\by|>\veps}{\left( \diffbux^2 - \diffbuy^2 
\right)k_a(\bx,\by)}{\by}{\bx},  
\end{align*}
where we have used the anti-symmetry of $k_{a}$ in the first equality. 
We use the integrability assumption \eqref{C1} in the last integral to  apply 
Fubini's theorem to obtain that 
\begin{align*}
\iintdm{|\bx-\by|>\veps}{\mathcal{D}({\bf u})(\bdx, \bdy) &\diffbuy 
k_a(\bx,\by)}{\by}{\bx}= \iintdm{|\bx-y|>\veps}{\diffbux^2 
k_a(\bx,\by)}{\by}{\bx}\\
&= \intdm{\bbR^d}{\bu(\bx)^{\intercal} \left[ \intdm{\complement 
B(\bx,\veps)}{k_a(\bx,\by) \shapetensorbxy}{\by} \right] \, \bu(\bx) }{\bx}.
\end{align*}
We then conclude from \eqref{C2} that   
\begin{align*}
\iintdm{\bbR^d \, \bbR^d}{\mathcal{D}({\bf u})(\bdx, \bdy) & \diffbuy 
k_a(\bx,\by)}{\by}{\bx} \\
&= \lim_{\veps \to 0} 
\iintdm{|\bx-\by|>\veps}{\mathcal{D}({\bf u})(\bdx, \bdy) \diffbuy 
k_a(\bx,\by)}{\by}{\bx} \\
&= \intdm{\bbR^d}{\bu(\bx)^{\intercal} \left[ \intdm{\complement 
B(\bx,\veps)}{k_a(\bx,\by) \shapetensorbxy}{\by} \right] \, \bu(\bx)}{\bx} \geq 0.
\end{align*}
Hence, from the definition of the bilinear form we have that 
\begin{equation}
\cF^k(\bu,\bu) \geq \frac{1}{2}\iintdm{\bbR^d \, \bbR^d}{(\mathcal{D}({\bf 
u})(\bdx, \bdy))^2 k_s(\bx,\by)}{\by}{\bx}
\end{equation}
and \eqref{eq:PositiveDefinitenessofE} is proved.
Therefore, by the Poincar\'e-Korn inequality $(PK)$ 
and \eqref{eq:PositiveDefinitenessofE},
\begin{align*}
\cF^k(\bu,\bu) \geq \frac{1}{4C_P}\Vert \bu \Vert^2_{L^2(\Omega)} + 
\frac{1}{4}[\bu,\bu]_{\HRd} \geq \frac{1}{4 C_P}\Vert \bu \Vert^2_{\HRd}.
\end{align*}
Finally, the Lax-Milgram Lemma implies that there exists a unique $\bu \in 
\HOmegaRd$ such that 
$$
\cF^k (\bu,{\bfs \vphi}) = \Vint{{\bf f}, \vphi} \qquad \text{for all } {\bfs 
\vphi} \in \HOmegaRd.
$$
\end{proof}
\subsubsection*{A Sufficient Condition for the Poincar\'e-Korn Inequality}
We emphasize that the Poincar\'e-Korn inequality $(PK)$ is an assumption in 
\autoref{thm:SolutionofD0}. Here we present a theorem that gives sufficient 
conditions on the kernel $k$ for the validity of thee Poincar\'e-Korn 
inequality. 
Given $\mathcal{I}$ an open subset of the unit sphere $\mathbb{S}^{d-1}$ such 
that the Hausdorff measure $\mathcal{H}^{d-1}(\mathcal{I}) > 0$, we call the set 
$\Lambda = \left\lbrace {\bf h}\in \mathbb{R}^{d}\setminus\{{\bfs 0}\} \, : \, 
\frac{{\bfs h}}{{|{\bf h}|} } \in \mathcal{I} \cup (- \mathcal{I}) 
\right\rbrace$ a double cone  with apex at the origin. Note that for any such 
cone $\Lambda = -\Lambda$. Denote $\Lambda_{B_r}:= \Lambda \cap B_{r}({\bfs 
0})$, a part of a double cone with apex at the origin in $B_{r}({\bfs 0})$.

\begin{proposition}\label{lem:SufficientPoincareLemma}
Let $\Omega \subset \bbR^d$ be open, bounded. Assume that there is an even, 
nonnegative function $\rho \in L^1(\bbR^d)$ satisfying the following 
conditions: 
\begin{enumerate}
\item[i)] There exists $\delta_{0} > 0$ and a cone $\Lambda$ with apex at the 
origin such that $\Lambda_{B_{\delta_{0} }} \subset \{\rho>0\}$.
\item[ii)] There exists $c_0 > 0$ such that for all ${\bf u} \in \HOmegaRd$
\begin{align}
\begin{split}
\iintdm{\bbR^d  \, \bbR^d}{& k_s(\bx,\by) \diffqbu^2}{\by}{\bx} \\
& \geq c_0 \iintdm{\bbR^d \, \bbR^d}{\rho(\bx-\by) \diffqbu^2}{\by}{\bx} \,.
\end{split}
\end{align}
\end{enumerate}
Then, there exists $C_P = C_P(\Omega,c_0,\rho,\delta_{0}, \Lambda) > 0$ such 
that for all ${\bf u} \in \HOmegaRd$
\begin{equation}
\Vnorm{\bu}_{L^2(\bbR^d)} \leq C_P \iintdm{\bbR^d \, \bbR^d}{k_s(\bx,\by) 
\diffqbu^2}{\by}{\bx}.
\end{equation}
\end{proposition}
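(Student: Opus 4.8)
The plan is to first prove the estimate with the model kernel $\rho$ in place of $k_s$, namely $\Vnorm{\bu}^2_{L^2(\bbR^d)} \le C \iintdm{\bbR^d\,\bbR^d}{\rho(\bx-\by)\,(\mathcal{D}(\bu)(\bx,\by))^2}{\by}{\bx}$ for every $\bu \in \HOmegaRd$, and then to invoke hypothesis (ii) to replace $\rho$ by $k_s$ at the cost of a factor $c_0^{-1}$; this is exactly condition $(PK)$, since $\bu$ vanishes outside $\Omega$. The point is that $\Lambda$ may be arbitrarily thin, so one cannot simply dominate a fractional Poincaré inequality on a ball; instead one must propagate information along lines whose directions lie in $\Lambda$. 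The elementary fact that makes this possible is that when $\bx-\by$ is parallel to a \emph{fixed} unit vector $e$, the projected difference collapses to a scalar difference: for $t>0$, $\mathcal{D}(\bu)(\bx,\bx-te) = (\bu(\bx)-\bu(\bx-te))\cdot e = (\bu\cdot e)(\bx) - (\bu\cdot e)(\bx-te)$, so along each line $\bbR e$ the energy controls the ordinary finite differences of the scalar function $v_e := \bu\cdot e$.

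The genuinely delicate step — and the one I expect to be the main obstacle — is extracting a \emph{quantitative} lower bound out of $\rho$: we only know $\Lambda_{B_{\delta_0}}\subset\{\rho>0\}$, not that $\rho$ is bounded away from $0$ there. Here I would fix $m\in\mathbb{N}$ with $|\{\rho\ge 1/m\}\cap\Lambda_{B_{\delta_0}}| > \tfrac12|\Lambda_{B_{\delta_0}}|$, pass to polar coordinates, and apply the Lebesgue density theorem followed by Chebyshev's inequality to produce an open set $\mathcal{I}'\subset\mathcal{I}$ with $\mathcal{H}^{d-1}(\mathcal{I}')>0$ and radii $0<a<a+\eta<\delta_0$ such that, for every $e\in\mathcal{I}'$, the slice $E_e := \{t\in(a,a+\eta) : \rho(te)\ge 1/m\}$ has measure $|E_e|\ge\eta/4$. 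This yields, inside $\{\rho\ge 1/m\}$, a ``solid box'' of directions and radii that stays a fixed distance $a>0$ from the origin (so the polar Jacobian $t^{d-1}$ is bounded above and below) and whose direction set $\mathcal{I}'$ still spans $\bbR^d$.

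Next comes the one-dimensional chaining. Since $\Omega$ is bounded, fix $R$ with $\Omega\subset B_R$ and $N\in\mathbb{N}$ with $Na>2R$. For any $e\in\mathbb{S}^{d-1}$, any $t\in(a,a+\eta)$, and a.e.\ $\bx$, telescoping gives $v_e(\bx) = \sum_{j=0}^{N-1}\bigl(v_e(\bx-jte)-v_e(\bx-(j+1)te)\bigr)$, because $v_e(\bx-Nte)=0$ (as $\bu$ vanishes off $\Omega$ and $|\bx-Nte|>R$ once $\bx\in B_R$); Cauchy--Schwarz, integration in $\bx$, and translation invariance then yield $\Vnorm{v_e}^2_{L^2(\bbR^d)} \le N^2\intdm{\bbR^d}{|\mathcal{D}(\bu)(\bx,\bx-te)|^2}{\bx}$. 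Integrating this over $t\in E_e$ (which costs the factor $|E_e|^{-1}\le 4/\eta$), then over $e\in\mathcal{I}'$, converting the $(e,t)$-integral back to a Cartesian integral over $\bh=te$, bounding $t^{-(d-1)}\le a^{-(d-1)}$ and $\mathbf{1}_{\{\rho\ge 1/m\}}\le m\rho$, and finally substituting $\by=\bx-\bh$, I obtain $\intdm{\mathcal{I}'}{\Vnorm{v_e}^2_{L^2(\bbR^d)}}{\sigma(e)} \le \tfrac{4N^2m}{\eta\,a^{d-1}}\iintdm{\bbR^d\,\bbR^d}{\rho(\bx-\by)\,(\mathcal{D}(\bu)(\bx,\by))^2}{\by}{\bx}$.

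It remains to recognize the left-hand side as a multiple of $\Vnorm{\bu}^2_{L^2(\bbR^d)}$: by Fubini, $\intdm{\mathcal{I}'}{\Vnorm{v_e}^2_{L^2(\bbR^d)}}{\sigma(e)} = \intdm{\bbR^d}{\bu(\bx)^{\intercal}M\,\bu(\bx)}{\bx}$ with $M = \intdm{\mathcal{I}'}{e\otimes e}{\sigma(e)}$, and $M$ is positive definite because a subset of $\mathbb{S}^{d-1}$ of positive $\mathcal{H}^{d-1}$-measure cannot be contained in a hyperplane; hence $\lambda_{\min}(M)\Vnorm{\bu}^2_{L^2(\bbR^d)}$ is controlled by the $\rho$-energy, and one further application of hypothesis (ii) gives the desired inequality with $C_P$ depending only on $\Omega$ (through $R,N$), on $c_0$, and on $\rho,\delta_0,\Lambda$ (through $m,a,\eta,\mathcal{I}'$, hence $\lambda_{\min}(M)$), as claimed. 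Everything downstream of the box-extraction — the telescoping, the polar change of variables, and the non-degeneracy of $M$ — is routine; the only place where real work is done is the measure-theoretic extraction of the box on which $\rho$ is uniformly positive.
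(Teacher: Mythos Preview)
Your argument is correct and takes a genuinely different route from the paper's proof. The paper proceeds by contradiction and compactness: assuming a normalized sequence $\bu_n \in S_\Omega(\bbR^d;\rho)$ with vanishing $\rho$-energy, it extracts a weak $L^2$ limit $\bu$, shows $\bu=\bfs{0}$ via a separate characterization of measurable infinitesimal rigid motions (the content of \autoref{lem:char-RM} and its corollary), and then upgrades weak to strong convergence by writing the integrable-kernel operator $\mathbb{L}_\rho$ as a compact convolution perturbation of a positive-definite constant-matrix multiplication, using that $\Omega$ is bounded. No explicit constant is produced.

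Your approach is direct and constructive: after extracting a sub-level set of $\rho$ with a uniform lower bound on a product-like region of directions and radii, you telescope along each ray in the cone---reducing the projected difference to a scalar finite difference of $v_e=\bu\cdot e$, which is exactly where the vector structure disappears---and then aggregate over directions via the matrix $M=\int_{\mathcal{I}'} e\otimes e\,\mathrm{d}\sigma(e)$, whose positive definiteness follows because a hyperplane meets $\mathbb{S}^{d-1}$ in a null set. This yields an explicit $C_P$ in terms of $m$, $a$, $\eta$, $N$, and $\lambda_{\min}(M)$, at the cost of the measure-theoretic extraction step that the compactness argument avoids. The paper, in exchange, gets the rigid-motion lemma as a standalone result of independent interest.

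One cosmetic point: there is no reason the set $\mathcal{I}'$ you obtain from Fubini and a Chebyshev-type argument should be \emph{open}; it is merely measurable with positive $\mathcal{H}^{d-1}$-measure. This is harmless, since openness is never used downstream---positive measure alone forces $M$ to be positive definite and makes the polar-to-Cartesian conversion go through.
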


In the next subsection we give a number of examples of kernels that satisfy the 
hypothesis of the proposition. We need the following lemma which generalizes 
\cite[Proposition 1.2]{Temam} and \cite[Lemma 2.2]{Du-Zhou2011} that give a 
characterization of infinitesimal rigid motions. 
\begin{lemma}\label{lem:char-RM}
Suppose that ${\bf u}:\mathbb{R}^{d}\to \mathbb{R}^{d}$ is a measurable 
vector-valued function such that for some fixed $\delta_0 >0$, and 
$\mathcal{J}\subset \mathcal{S}^{d-1}$, an open subset of the unit sphere 
$\mathbb{S}^{d-1}$ with  $\mathcal{H}^{d-1}(\mathcal{J}) > 0$, it holds that 
for almost every $\bx \in \bbR^d$,
$$
(\bu(\bx)-\bu(\by)) \cdot (\bx-\by) = 0 \quad \text{for almost every ${\bf y} 
\in \left\lbrace {\by }\in B_{\delta_0}(\bx): {{\bx-\by}\over|\bx-\by|} \in 
\mathcal{J} \right\rbrace $.}
$$
Then ${\bf u}$ is an affine map of the form ${\bf u}(\bx) = \mathbb{A}\bx + 
{\bf b}$, almost everywhere,  where $\mathbb{A}$ is a constant skew symmetric 
matrix ($\mathbb{A}^{\intercal} = -\mathbb{A})$, and ${\bf b}\in 
\mathbb{R}^{d}$. 
\end{lemma}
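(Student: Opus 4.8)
The plan is to reduce the problem to the classical (local) characterization of infinitesimal rigid motions via a mollification argument, exploiting the fact that the vanishing of the projected difference on a cone of directions, for all small scales, is strong enough to pin down the full symmetric gradient. First I would fix a standard mollifier $\psi_\epsilon \in C_c^\infty(B_\epsilon(\mathbf 0))$ and set $\mathbf u_\epsilon = \mathbf u * \psi_\epsilon$; since $\mathbf u$ is only measurable I first need to know it is locally integrable, but this is not assumed, so the cleaner route is to first show $\mathbf u$ is locally integrable as a consequence of the hypothesis, or alternatively to work with the difference-quotient identity directly in the sense of distributions. Assuming $\mathbf u \in L^1_{loc}$, for $\bx$ in a fixed ball and $\epsilon < \delta_0/2$ one computes, for every unit vector $\mathbf e \in \mathcal J$ and every $0 < t < \delta_0/2$,
\[
(\mathbf u_\epsilon(\bx) - \mathbf u_\epsilon(\bx - t\mathbf e)) \cdot \mathbf e = 0,
\]
because this is an average over $\by, \by'$ of the quantity $(\mathbf u(\by) - \mathbf u(\by')) \cdot (\by - \by')/|\by - \by'|$ with $(\by - \by')/|\by-\by'|$ ranging in a small neighborhood of $\mathbf e$, hence in $\mathcal J$ for $\epsilon$ small (after possibly shrinking $\mathcal J$ to a slightly smaller open subset $\mathcal J'$ with positive measure — note the hypothesis is stated for a.e.\ $\bx$ and a.e.\ direction, so I must first pass to a positive-measure subset where it holds cleanly, which a Fubini argument provides). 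This forces $\partial_{\mathbf e}(\mathbf u_\epsilon \cdot \mathbf e) = \mathbf e^\intercal (\nabla \mathbf u_\epsilon) \mathbf e = 0$, i.e.\ $\mathbf e^\intercal \varepsilon(\mathbf u_\epsilon) \mathbf e = 0$ for all $\mathbf e$ in the open set $\mathcal J'$.

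Next I would invoke the elementary linear-algebra fact that a symmetric $d\times d$ matrix $S$ with $\mathbf e^\intercal S \mathbf e = 0$ for all $\mathbf e$ in an open subset of $\mathbb S^{d-1}$ must vanish: the quadratic form $\mathbf e \mapsto \mathbf e^\intercal S \mathbf e$ is the restriction to the sphere of a real-analytic (indeed polynomial) function, so if it vanishes on an open subset of $\mathbb S^{d-1}$ it vanishes identically, and then $S = 0$. Applying this pointwise in $\bx$ to $S = \varepsilon(\mathbf u_\epsilon)(\bx)$ gives $\varepsilon(\mathbf u_\epsilon) \equiv 0$ on the ball. By the classical Temam-type result (\cite[Proposition 1.2]{Temam}), $\mathbf u_\epsilon(\bx) = \mathbb A_\epsilon \bx + \mathbf b_\epsilon$ with $\mathbb A_\epsilon$ skew-symmetric, on each ball; a standard covering/consistency argument upgrades this to a single affine representation on all of $\mathbb R^d$. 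Finally I let $\epsilon \to 0$: since $\mathbf u_\epsilon \to \mathbf u$ in $L^1_{loc}$ and each $\mathbf u_\epsilon$ is affine with skew-symmetric linear part, the coefficients $(\mathbb A_\epsilon, \mathbf b_\epsilon)$ converge (affine functions form a finite-dimensional closed subspace of $L^1_{loc}$), and the limit $\mathbf u(\bx) = \mathbb A \bx + \mathbf b$ has $\mathbb A$ skew-symmetric, as claimed.

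The main obstacle is the regularity bootstrapping at the start: the hypothesis only asserts $\mathbf u$ is measurable, and the vanishing condition is quantified over "almost every $\bx$'' and "almost every $\by$ in a cone,'' so before mollifying one must (i) establish $\mathbf u \in L^1_{loc}$ and (ii) extract from the a.e.-a.e.\ statement a version that survives convolution — concretely, a co-null set of $\bx$ and a fixed positive-measure open cone of directions on which $(\mathbf u(\bx) - \mathbf u(\by))\cdot(\bx - \by) = 0$ robustly. For (ii) one applies Fubini to the set $N \subset \mathbb R^d \times \mathcal J$ of "bad'' pairs, which has measure zero, to find a smaller open cone $\mathcal J' \Subset \mathcal J$ and a co-null set of base points for which the identity holds for a.e.\ direction in $\mathcal J'$; averaging against the mollifier then only involves a null set and is harmless. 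For (i), one can observe that if $\mathbf u$ were not locally integrable the problem is slightly delicate, but in fact the difference identity combined with a single translation already controls $\mathbf u(\bx)$ by $\mathbf u$ on a nearby set in a weak sense; alternatively, since this lemma will only be applied to $\mathbf u \in S_\Omega(\mathbb R^d;k) \subset L^2(\mathbb R^d;\mathbb R^d)$, one may simply add the hypothesis $\mathbf u \in L^1_{loc}$ (or $L^2_{loc}$) without loss for the intended application. Everything after the mollification is routine.
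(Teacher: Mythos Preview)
Your argument is essentially correct, modulo the local integrability caveat you already flag, but it follows a genuinely different route from the paper's proof. The paper proceeds purely algebraically and pointwise: it selects a basis $\{\mathbf{e}_i\}_{i=1}^d \subset \mathcal{J}$, picks $\delta_1>0$ small enough that the translated cones $\Gamma(\mathbf{x}_0+\delta_1\mathbf{e}_i)$ have a common intersection $\tilde{\Gamma}(\mathbf{x}_0)$ of positive measure with $\Gamma(\mathbf{x}_0)$, and then combines the vanishing identity at the base points $\mathbf{x}_0$ and $\mathbf{x}_0+\delta_1\mathbf{e}_i$ to obtain directly that $\mathbf{u}(\mathbf{x})\cdot\mathbf{e}_i$ is an explicit affine function of $\mathbf{x}$ on $\tilde{\Gamma}(\mathbf{x}_0)$. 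Inverting the basis and using a finite chain of overlapping sets $\tilde{\Gamma}(\cdot)$ propagates this to all of $\mathbb{R}^d$; skew--symmetry of $\mathbb{A}$ then falls out of the original identity. No mollification, no differentiation, no appeal to the classical rigidity result for $\varepsilon(\mathbf{u})=0$ is needed, and in particular the argument works for merely measurable $\mathbf{u}$ as stated.

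Your approach trades this elementary finite--difference manipulation for an analytic reduction: smoothness via mollification, then the observation that $\mathbf{e}^{\intercal}\varepsilon(\mathbf{u}_\epsilon)\mathbf{e}=0$ on an open set of directions forces $\varepsilon(\mathbf{u}_\epsilon)\equiv 0$ by polynomial identity, and finally the classical Temam--type characterization. This is conceptually clean and places the lemma squarely as a nonlocal analogue of the local result, which is pedagogically valuable. The price is the $L^1_{loc}$ hypothesis you note; since in the intended application $\mathbf{u}\in L^2(\mathbb{R}^d;\mathbb{R}^d)$ this is harmless, but the paper's proof achieves the sharper statement. One small simplification of your write--up: in the convolution step the difference $\mathbf{u}_\epsilon(\mathbf{x})-\mathbf{u}_\epsilon(\mathbf{x}-t\mathbf{e})$ is an average of $\mathbf{u}(\mathbf{w})-\mathbf{u}(\mathbf{w}-t\mathbf{e})$ over $\mathbf{w}$, so the direction is \emph{exactly} $\mathbf{e}$, not merely close to it; the neighborhood--of--$\mathbf{e}$ picture and the shrinking of $\mathcal{J}$ are unnecessary, and a single Fubini argument (over $(\mathbf{x},t,\mathbf{e})$) together with continuity of the mollified map suffices.
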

\begin{proof}
For $\bx \in \bbR^d$, define $\Gamma(\bx) =  \left\lbrace {\by }\in 
B_{\delta_0}(\bx): {{\bx-\by}\over|\bx-\by|} \in \mathcal{J} \right\rbrace$.  
For each $\bx$, the set $\Gamma(\bx)$ is an open set and is in fact the 
intersection of the ball $B_{\delta_0}(\bx)$ with the cone whose directions lie 
in $\mathcal{J}$ with apex at $\bx$.
Let $\{{\bf e}_i\}_{i=1}^d$ denote a basis for $\bbR^d$ contained in  
$\mathcal{J}$; such a basis exists because $\mathcal{J}$ is nontrivial.  
Then since the Lebesgue integral is continuous with respect to translations, 
there exists a $\delta_1 > 0$ such that the function
$$
\delta \mapsto \intdm{\bbR^d}{\left( \prod_{i=1}^d \chi_{\Gamma(\delta {\bf 
e}_i)} \right) \chi_{\Gamma(0)}}{\bx}
$$
is positive. For $\bx \in \bbR^d$, set $\tilde{\Gamma}(\bx) := \left( 
\bigcap_{i=1}^d \Gamma(x+\delta_1 {\bf e}_i) \right) \cap \Gamma(\bx)$. By the 
discussion above, $\tilde{\Gamma}(\bx)$ is an open set of positive measure. 

Now fix $\bx_0 \in \bbR^d$ (up to a set of measure zero). Then by the main 
assumption in the lemma, for almost every $\bx \in \tilde{\Gamma}(\bx_0)$ we 
have
\begin{equation}\label{eq-PoincareTechnicalLemma2-ProofEq1}
\left( (\bu(\bx)-\bu(\bx_0)) \cdot (\bx-\bx_0) \right) = 0
\end{equation}
and
\begin{equation}\label{eq-PoincareTechnicalLemma2-ProofEq2}
\left( (\bu(\bx)-\bu(\bx_0 + \delta_1 {\bf e}_i)) \cdot (\bx-\bx_0 - \delta_1 
{\bf e}_i) \right) = 0\,.
\end{equation}
Therefore, adding and subtracting $\bu(\bx_0)$ in the first argument of 
\eqref{eq-PoincareTechnicalLemma2-ProofEq2} and $\bx_0$ in the second and using 
\eqref{eq-PoincareTechnicalLemma2-ProofEq1} we see that
$$
(\bu(\bx)-\bu(\bx_0)) \cdot \delta_1 {\bf e}_i = - (\bu(\bx_0+\delta_1 {\bf 
e}_i)-\bu(\bx_0)) \cdot (\bx-\bx_0).
$$
So,
$$
\bu(\bx) \cdot {\bf e}_i = \frac{1}{\delta_1} \left( ({\bf u}(\bx_0+\delta_1 
{\bf e}_i)-{\bf u}(\bx_0)) \cdot (\bx-\bx_0) \right) + \bu(\bx_0) \cdot {\bf 
e}_i
$$
for every $\bx \in \tilde{\Gamma}(\bx_0)$ and every $i$, which is clearly a 
linear map.
Then, letting $\mathbb{E} = [{\bf e}_i]$ be the matrix of basis vectors, and 
${\bf u} = (u_1, u_2,\cdots,u_d)$, we have that 
\begin{align*}
u_i(\bx) = \left( \mathbb{E}^{-1} (\mathbb{E}\bu) \right)_i = \sum_j 
e^{-1}_{ij} ({\bf e}_{j}\cdot \bu(\bdx))
\end{align*}
which, being a sum of of linear maps, is still linear. We conclude that for 
almost all $\bx\in \tilde{\Gamma}(\bx_0)$ the vector field $\bu$ is of the form 
$\mathbb{A}(\bx_0)\bx + {\bf b}(\bx_0)$, where $\mathbb{A}$ is matrix with 
constant entries (depending possibly on $\bx_0$) and ${\bf b}$ is a constant 
vector (also depending on $\bx_0$) in $\bbR^d$. 

Next given any two points in $\bbR^d$, outside of a set of measure zero, we  
connect them by  finitely many sets of the form $\tilde{\Gamma}(\bx)$, i.e. for 
any two points $\bx_0$ and $\bx_1$ in $\bbR^d$ there exists a finite subcover 
of $(\tilde{\Gamma}(\bx))_{\bx \in \bbR^d}$, denoted 
$(\tilde{\Gamma}(\bx_k))_{k=1}^N$, such that $\tilde{\Gamma}(\bx_k) \cap 
\tilde{\Gamma}(\bx_{k+1}) \neq \emptyset$ and $\bx_0 \in 
\tilde{\Gamma}(\bx_0)$, $x_1 \in \tilde{\Gamma}(\bx_N)$. This is possible, 
since the line segment connecting $\bx_0$ and $\bx_1$ is compact. Therefore the 
$\bu$ given above is the same in neighboring intersecting open sets  and so $\bu 
= \mathbb{A}\bx + {\bf b}$ on $\bbR^d$ where $\mathbb{A}$, ${\bf b}$ are now 
constants.  Again from the main assumption, the matrix $\mathbb{A}$ must be 
skew symmetric.  
\end{proof}
\begin{corollary}\label{cor:PoincareTechnicalLemma2}
Let $\Omega \subset \bbR^d$ be open, bounded. Assume that there is a 
nonnegative even function $\rho \in L^1(\bbR^d)$ satisfying the following:
\begin{center}
There exist $\delta_0 > 0$ and a symmetric cone $\Lambda$ with vertex at the 
origin such that $\Lambda \cap B_{\delta_0}(0) \subset \supp \rho$.
\end{center}
Suppose that $\bu \in L^2(\bbR^d;\mathbb{R}^{d})$ satisfies
$$
\iintdm{\bbR^d \, \bbR^d}{\rho(\bx-\by) \diffqbu^2}{\by}{\bx} = 0.
$$
Then $\bu = {\bf 0}$ almost everywhere.
\end{corollary}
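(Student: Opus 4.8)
The plan is to show that the hypothesis forces $\bu$ to be an infinitesimal rigid motion via \autoref{lem:char-RM}, and then to observe that an affine map lying in $L^2(\bbR^d)$ must vanish identically.

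First I would note that, since $\rho \geq 0$, the integrand $\rho(\bx-\by)\diffqbu^2$ is nonnegative with vanishing integral, hence equals zero for a.e.\ $(\bx,\by)$; equivalently, $(\bu(\bx)-\bu(\by)) \cdot (\bx-\by) = 0$ for a.e.\ $(\bx,\by)$ with $\rho(\bx-\by) > 0$. Substituting $\by = \bx - {\bf h}$ and applying Tonelli's theorem, this yields $\intrd{\rho({\bf h})\,\psi({\bf h})}{{\bf h}} = 0$, where $\psi({\bf h}) := \intrd{\bigl((\bu(\bx)-\bu(\bx-{\bf h})) \cdot \frac{{\bf h}}{|{\bf h}|}\bigr)^2}{\bx}$, and therefore $\psi({\bf h}) = 0$ for a.e.\ ${\bf h}$ in $\{\rho > 0\}$.

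The hard part will be upgrading this to $\psi \equiv 0$ on all of $(\Lambda \cap B_{\delta_0}(0)) \setminus \{{\bfs 0}\}$; this is the only point at which weakening condition~(i) of \autoref{lem:SufficientPoincareLemma} from pointwise positivity of $\rho$ to the support condition costs anything. To carry it out I would use that translation is continuous in $L^2(\bbR^d)$ and that ${\bf h} \mapsto {\bf h}/|{\bf h}|$ is continuous on $\bbR^d \setminus \{{\bfs 0}\}$, so that ${\bf h} \mapsto (\bu(\cdot)-\bu(\cdot-{\bf h})) \cdot \frac{{\bf h}}{|{\bf h}|}$ is continuous from $\bbR^d \setminus \{{\bfs 0}\}$ into $L^2(\bbR^d)$; hence $\psi$ is finite (bounded by $4\,\|\bu\|_{L^2(\bbR^d)}^2$) and continuous on $\bbR^d \setminus \{{\bfs 0}\}$. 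Since every point of $\supp\rho$ is an accumulation point of $\{\rho > 0\}$ and $\psi$ vanishes on $\{\rho > 0\}$ off a set of measure zero, $\psi$ vanishes on a dense subset of $(\supp\rho) \setminus \{{\bfs 0}\}$, hence on all of it by continuity; the inclusion $\Lambda \cap B_{\delta_0}(0) \subset \supp\rho$ then gives $\psi \equiv 0$ on $(\Lambda \cap B_{\delta_0}(0)) \setminus \{{\bfs 0}\}$.

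To conclude, I would read off that for every ${\bf h} \in \Lambda \cap B_{\delta_0}(0)$ one has $(\bu(\bx)-\bu(\bx-{\bf h})) \cdot {\bf h} = 0$ for a.e.\ $\bx$, so that by Fubini, for a.e.\ $\bx$, $(\bu(\bx)-\bu(\by)) \cdot (\bx-\by) = 0$ for a.e.\ $\by$ with $\bx - \by \in \Lambda \cap B_{\delta_0}(0)$. Writing $\Lambda$ in terms of the open spherical cap $\mathcal{I} \subset \mathbb{S}^{d-1}$ that generates it, this is exactly the hypothesis of \autoref{lem:char-RM} with $\mathcal{J} = \mathcal{I}$ and the same $\delta_0$, so $\bu(\bx) = \mathbb{A}\bx + {\bf b}$ a.e.\ for a constant skew-symmetric matrix $\mathbb{A}$ and some ${\bf b} \in \bbR^d$. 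Finally, $\intrd{|\mathbb{A}\bx + {\bf b}|^2}{\bx} < \infty$ forces $\mathbb{A} = 0$ and ${\bf b} = {\bfs 0}$, so $\bu = {\bfs 0}$ almost everywhere. Apart from the density/continuity step, the argument is a direct application of \autoref{lem:char-RM} together with elementary measure theory.
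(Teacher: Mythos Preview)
Your proof is correct and follows the same skeleton as the paper's: nonnegativity of the integrand forces the projected difference to vanish almost everywhere on the relevant set, \autoref{lem:char-RM} then identifies $\bu$ as an infinitesimal rigid motion, and membership in $L^2(\bbR^d)$ forces the affine map to be zero.

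The one substantive difference is your continuity step for $\psi$. The paper passes directly from ``integrand vanishes a.e.'' to ``$(\bu(\bx)-\bu(\by))\cdot(\bx-\by)=0$ for a.e.\ $\by \in \supp\rho + \bx$,'' but the vanishing of the integral only gives this for a.e.\ $\by$ with $\rho(\bx-\by)>0$, and $\{\rho>0\}$ can differ from $\supp\rho$ by a set of positive measure. Under the hypothesis of \autoref{lem:SufficientPoincareLemma} (namely $\Lambda_{B_{\delta_0}}\subset\{\rho>0\}$) this distinction is harmless, but the corollary as stated assumes only $\Lambda\cap B_{\delta_0}(0)\subset\supp\rho$. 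Your argument---continuity of ${\bf h}\mapsto\psi({\bf h})$ on $\bbR^d\setminus\{{\bfs 0}\}$ via $L^2$-continuity of translations, together with density of $\{\rho>0\}$ in $\supp\rho$---closes this gap cleanly and is a genuine improvement in rigor over the paper's treatment.
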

\begin{proof}
Since the integrand is nonnegative, we see that for almost every $\bx \in 
\bbR^d$,
$$
(\bu(\bx)-\bu(\by)) \cdot (\bx-\by) = 0
$$
for almost every $\by \in \supp \rho + \bx := \{ \bz \, : \, \bz-\bx \in \supp 
\rho \}$. By assumption and \autoref{lem:char-RM}, ${\bf u}$ is an affine map. 
But since ${\bf u} \in L^2(\bbR^d)$, it follows that $\bu$ must be the zero 
vector field. 
\end{proof}

Now, we are ready to prove the sufficiency for Poincar\'e-Korn inequality.  The 
proof follows the argument presented in the proof of  \cite[Proposition 
2]{Mengesha-Du}, that applies the case when $\rho$ is radial. 

\begin{proof}[Proof of \autoref{lem:SufficientPoincareLemma}]
Without loss of generality we assume that $\rho$ has compact support of 
positive measure. (else replace $\rho$ by $\rho \chi_{B(0,r)}$). Then $\rho$ 
satisfies \eqref{SM}.  
To prove the lemma, it suffices to show that there exists a constant $C>0$ such 
that for all ${\bf u}\in L^{2}_{\Omega}(\mathbb{R}^{d})$, 
\[
\|{\bf u}\|_{L^{2}} \leq C [{\bf u}, {\bf u}]_{H(\mathbb{R}^{d};\rho)}. 
\]
Suppose to the contrary; that there exists $\{ \bu_n \} \subset 
S_{\Omega}(\bbR^d;\rho)$ such that $\forall \, n \in \bbN$ 
$\Vnorm{\bu_n}_{L^2(\bbR^d)} = 1$ and $[\bu_n,\bu_n]_{S(\mathbb{R}^{d};\rho)} 
\to 0$ as $n \to \infty$. 
Let $\bu$ be the weak $L^2(\bbR^d)$ limit of $\{ \bu_n \}$. We first show that 
$\bu = {\bfs 0}$. Note that because of the properties of $\rho$ the operator
$$
\mathbb{L}_{\rho} \bu(\bx) := \intdm{\bbR^d}{\rho(\bx-\by) \shapetensorbxy 
(\bu(\bx)-\bu(\by))}{\by}
$$
is a bounded linear map from $L^2(\mathbb{R}^{d};\mathbb{R}^{d})$ to 
$L^2(\mathbb{R}^{d};\mathbb{R}^{d})$.
Let ${\bfs \vphi} \in C^{\infty}_c(\bbR^d)$. Then, by the Cauchy-Schwartz 
inequality,
\begin{align*}
\cF^{\rho}(\bu_n,{\bfs \vphi}) &= \iintdm{\bbR^d \, \bbR^d}{\rho(\bx-\by) 
\left( (\bu_n(\bx)-\bu_n(\by)) \cdot {\bx - \by \over |\by-\bx|} \right) \left( 
{\bfs \vphi}(\bx) \cdot {\bx - \by \over |\by-\bx|} \right)}{\by}{\bx} \\
&\leq \left( \iintdm{\bbR^d \, \bbR^d}{\rho(\bx-\by) \left( 
(\bu_n(\bx)-\bu_n(\by))\cdot {\bx - \by \over |\by-\bx|} \right)^2}{\by}{\bx} 
\right)^{1/2} \left( \iintdm{\bbR^d \, \bbR^d}{\rho(\bx-\by)|{\bfs 
\vphi}(\bx)|}{\by}{\bx}\right)^{1/2} \\
&=\Vnorm{\rho}_{L^1(\bbR^d)} [\bu_n,\bu_n]^{1/2}_{S(\bbR^d;\rho)} \Vnorm{{\bfs 
\vphi}}_{L^2(\bbR^d)} \\
&\to 0 \text{ as } n \to \infty.
\end{align*}
Now, since $\rho$ is symmetric, $\cF^{\rho}$ is symmetric. Thus,
$$
(\mathbb{L}_{\rho} \bu_n,{\bfs \vphi})_{L^2} = \cF^{\rho}(\bu_n,{\bfs \vphi}) = 
\cF^{\rho}({\bfs \vphi},\bu_n) = (\bu_n, \mathbb{L}_{\rho} {\bfs \vphi})_{L^2} 
\quad \forall \, n \in \bbN.
$$
Since $\mathbb{L}_{\rho} {\bfs \vphi} \in L^2(\mathbb{R}^{d};\mathbb{R}^{d})$, 
it follows that $\cF^{\rho}({\bfs \vphi},\bu_n) \to \cF^{\rho}({\bfs 
\vphi},\bu)$ as $n \to \infty$.
Therefore, for all ${\bfs \vphi} \in C^{\infty}_c(\bbR^d;\mathbb{R}^{d})$, 
$(\mathbb{L}_{\rho} \vphi, \bu)_{L^2} = (\mathbb{L}_{\rho} \bu, {\bfs 
\vphi})_{L^2} = 0$. Thus $\mathbb{L}_{\rho} \bu = {\bfs 0}$ a.e. 
As a consequence, since $\rho$ is even and assumption $ii)$
$$
(\mathbb{L} \bu,\bu)_{L^2} = \cF^{\rho}(\bu,\bu) = 
\frac{1}{2}[\bu,\bu]_{S(\bbR^d;\rho)} = \frac{1}{2} \iintdm{\bbR^d \, 
\bbR^d}{\rho(\bx-\by) \diffqbu^2}{\by}{\bx}= 0.
$$
We can now apply \autoref{cor:PoincareTechnicalLemma2} to conclude that 
$\bu \equiv {\bfs 0}$ on $\bbR^d$.

Next we show that in fact, up to a subsequence, $\bu_n \to 0$ strongly in 
$L^2$, and arrive at our contradiction. To show this it suffices to demonstrate 
that $\|{\bf u}_{n}\|_{L^{2}(\Omega)} \to 0$ as $n\to \infty$.  Define 
$\bbK({\bfs \xi}) = \rho({\bfs \xi}) {{\bfs \xi}\otimes {\bfs \xi}\over |{\bfs 
\xi}|^{2}} $. Note that $\bbK \in L^1(\bbR^d; \bbR^d\times \bbR^d)$ since $\rho 
\in L^1 \cap L^{\infty}(\bbR^d)$. Then, define $\bbK \ast \bu(\bx)$ and 
$\mathbb{B}$ as
$$
(\bbK \ast \bu(\bx))_i = \sum_{j=1}^d 
\intdm{\bbR^d}{(\bbK(\bx-\by))_{ij}\bu(\by)_j}{\by}, \qquad {\mathbb{B} =  
\intdm{\bbR^d}{\bbK({\bfs \xi})}{{\bfs \xi}}}.
$$
Both quantities converge absolutely and are well-defined. Further, 
$\mathbb{L}_{\rho} \bu(\bx) = \bbK \ast \bu(\bx) - \bbB \bu(\bx)$. Note that 
$\bbB$ is a positive definite constant matrix, which follows from the fact that 
 $
\Phi(v) = {\bv}^{\intercal} \mathbb{B} \bv = \intdm{\bbR^d}{\rho({\bfs \xi}) 
\left| \frac{{\bfs \xi}}{|{\bfs \xi}|} \cdot {\bv} \right|^2}{{\bfs \xi}}
$
is a continuous and positive function on the units sphere $\mathbb{S}^{d-1}$.   
From an above estimate, we have that
$$
(\bbL_{\rho} \bu_n , \bu_n)_{L^2} \leq \Vnorm{\rho}_{L^1(\bbR^d)} 
[\bu_n,\bu_n]_{S(\bbR^d;\rho)} \Vnorm{\bu_n}_{L^2(\bbR^d)} \to 0 \quad \text{ 
as } n \to \infty.
$$
Since $\bu_n \rightharpoonup {\bfs 0}$ weakly in $L^2(\bbR^d)$, by compactness 
of the convolution operator \cite[Corollary 4.28]{Brezis-Book} we have that
$$
\bbK \ast \bu_n(\bx) \to {\bfs 0} \text{ strongly in } L^2(\Omega; \bbR^d).
$$
Therefore, if $\bbB \geq \gamma \bbI$ in the sense of quadratic forms, we have 
that
\begin{align*}
\gamma \lim\limits_{n \to \infty} \intdm{\bbR^d}{|\bu_n|^2}{\bx} &\leq 
\lim\limits_{n \to \infty} (\bbB \bu_n,\bu_n)_{L^2(\mathbb{R}^{d})}
=\lim\limits_{n \to \infty} (\bbB \bu_n,\bu_n)_{L^2(\mathbb{R}^{d})} + 
\lim\limits_{n \to \infty} (\bbK \ast \bu_n, \bu_n)_{L^{2}(\Omega)} \\
&= \lim\limits_{n \to \infty} (\bbL_{\rho} \bu_n,\bu_n)_{L^{2}(\Omega)} = 0.
\end{align*}
That completes the proof. 
\end{proof}

\subsubsection{Examples of kernels}
There are several examples of kernels that satisfy all the conditions of the 
theorem; a number of them are discussed in detail in \cite{Kassmann} in 
connection with the solvability of the Dirichlet problem of nonlocal equations. 
For some of these examples, the verification of $(PK)$ is nontrivial.  We list 
several examples of nontrivial kernels, for which we can verify all the 
conditions. This shows that the nonlocal Dirichlet problem for the 
corresponding of equations is well-posed. Given $\mathcal{I}$ an open subset of 
the unit sphere $\mathbb{S}^{d-1}$ such that Hausdorff measure 
$\mathcal{H}^{d-1}(\Lambda) > 0$, we call the set $\Lambda = \left\lbrace {\bf 
h}\in \mathbb{R}^{d}\setminus\{{\bfs 0}\}: \frac{{\bfs h}}{{|{\bf h}|} } \in 
\mathcal{I} \cup (- \mathcal{I}) \right\rbrace$ a double cone  with apex at the 
origin. Note that for such cone $\Lambda = -\Lambda$. Denote $\Lambda_{B_r}:= 
\Lambda \cap B_{r}({\bfs 0})$, a part of a double cone with apex at the origin 
in $B_{r}({\bfs 0})$. 

\textit{Example 1:} Suppose that $\rho({\bfs \xi})$ is a nonnegative, even,   integrable 
function in $\mathbb{R}^{d}$.  Define now
$$
k(\bdx, \bdy) = \rho(\bdx-\bdy)\chi_{\Lambda_{B_1}}(\bx-\by)\,.
$$
Since $K$ is symmetric, we only need to verify the Poincar\'e-Korn type 
inequality $(PK)$. But this is a consequence of 
\autoref{lem:SufficientPoincareLemma}. See also \cite[Proposition 
2]{Mengesha-Du} for a similar result that is valid for radial kernels.   Note 
that for these types of kernels the space $S(\mathbb{R}^{d};k)$ is just 
$L^{2}(\mathbb{R}^{d}; \mathbb{R}^{d})$.

\textit{Example 2:} More generally, if $\mathcal{C} = \left\lbrace {\bf h}\in B_{1}({\bfs 
0}): \frac{{\bfs h}}{{|{\bf h}|} } \in \mathcal{J} \right\rbrace$, and 
$\mathcal{J}$
 is an open subset of the unit sphere $\mathbb{S}^{d-1}$ with Hausdorff measure 
$\mathcal{H}^{d-1}(\Lambda) > 0$, then  $k(\bdx, \bdy) = 
\rho(\bdx-\bdy)\chi_{\mathcal{C}}(\bx-\by)$ satisfies all the conditions of the 
theorem. In this  case the kernel is not symmetric. However,  its symmetric 
part is $k_{s}(\bdx, \bdy) =\rho(\bdx-\bdy)\chi_{\mathcal{C}\cup 
(-\mathcal{C})}(\bx-\by)$, and the union $\mathcal{C}\cup (-\mathcal{C})$ is 
now a double cone with apex at the origin. The antisymmetric part of $k$ is 
given by $k_a(\bdx, \bdy) = 
\frac{1}{2}\left(\rho(\bdx-\bdy)\chi_{\mathcal{C}}(\bx-\by)-\rho(\bdx-\bdy)\chi_
{\mathcal{(-C)}}(\bx-\by)\right)$ and satisfies both conditions \eqref{C1} and 
\eqref{C2} as can easily be seen. 

Before we give other examples let us first prove a lemma that helps us compare 
function spaces. The result is an improvement of \cite[Lemma 
2.12]{Du-Zhou2011}, where the same result is shown for radial kernels that are 
supported on $\Lambda_{B_r} = B_r$.
\begin{lemma} [{\bf Fractional Korn inequality}]\label{lem:Korns-lemma}
Let $s\in (0, 1)$ and let $m({\bfs \xi})$ be an even function defined on 
$B_r({\bfs 0})$ with the property that $0<\alpha_1 \leq m({\bfs \xi}) \leq 
\alpha_{2} < \infty$ for some positive constants $\alpha_1$ and $\alpha_2$.  
For a given $\Lambda$ a double cone with apex at the origin and a given $r > 0$ 
define the kernel
 \[
 k_r(\bdx, \bdy) = \frac{m({\bdx-\bdy})  }{|\bdx - \bdy|^{d + 
2s}}\chi_{\Lambda_{B_{r}}}(\bx-\by).  
 \]
Then the function space $S(\mathbb{R}^{d};k_r)$ is precisely 
$H^{s}(\mathbb{R}^{d};\mathbb{R}^{d})$. Moreover, there exists a function 
$\beta(r)$ with the property that $\beta(r)\to 0$ as $r\to \infty$, and 
positive constants $C_1$, $C_2$ such that 
\begin{equation}\label{equi-korn}
C_{1}[{\bf u}, {\bf u}]_{S(\mathbb{R}^{d};k_r)}\leq |{\bf u}|_{H^{s}}^{2} \leq 
C_{2} [{\bf u}, {\bf u}]_{S(\mathbb{R}^{d};k_r)} + C_{2} \beta(r) \|{\bf 
u}\|_{L^{2}}^{2}. 
\end{equation}
If $\Lambda_{B_r}$ is replaced by $ \Lambda$, then $\beta$ can be taken to be 
the zero function.  
The constants $C_{1},$ $C_2$ and the function $\beta$ depend on $\alpha_i$, 
$\Lambda$, $d$ and $s$.
\end{lemma}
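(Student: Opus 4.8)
The plan is to pass to the Fourier side, as anticipated in the remark following \autoref{prop:WellDefinedLandF}. Substituting $\bh=\bx-\by$ in the seminorm and applying Plancherel's theorem componentwise in $\bx$ — for fixed $\bh$ the map $\bx\mapsto(\bu(\bx)-\bu(\bx-\bh))\cdot\bh/|\bh|$ has Fourier transform $\xi\mapsto(1-\e^{-\mathrm{i}\bh\cdot\xi})\,\widehat{\bu}(\xi)\cdot\bh/|\bh|$, and $|\widehat{\bu}(\xi)\cdot\bh/|\bh||^{2}=\langle\shapetensorbh\widehat{\bu}(\xi),\widehat{\bu}(\xi)\rangle$ — one obtains after Fubini
\[
[\bu,\bu]_{S(\bbR^d;k_r)}=\intdm{\bbR^d}{\bigl\langle M_r(\xi)\widehat{\bu}(\xi),\widehat{\bu}(\xi)\bigr\rangle}{\xi},\qquad M_r(\xi):=\intdm{\Lambda_{B_r}}{\frac{m(\bh)\,\bigl|1-\e^{-\mathrm{i}\bh\cdot\xi}\bigr|^{2}}{|\bh|^{d+2s}}\,\shapetensorbh}{\bh},
\]
where $\langle\cdot,\cdot\rangle$ is the Hermitian inner product on $\mathbb{C}^d$ and $M_r(\xi)$ is a real symmetric positive semidefinite matrix; similarly $|\bu|_{H^s}^2=C(d,s)\intdm{\bbR^d}{|\xi|^{2s}|\widehat{\bu}(\xi)|^2}{\xi}$, where $C(d,s)=\intdm{\bbR^d}{\bigl|1-\e^{-\mathrm{i}\bh\cdot\nu}\bigr|^{2}|\bh|^{-d-2s}}{\bh}$ is independent of the unit vector $\nu$. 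In these terms \eqref{equi-korn} and the identity $S(\bbR^d;k_r)=H^s(\bbR^d;\bbR^d)$ reduce to the matrix bound $M_\Lambda(\xi)\asymp|\xi|^{2s}\mathbb I$ (in the sense of quadratic forms) for the full-cone symbol $M_\Lambda$, together with a tail estimate for $M_\Lambda-M_r$.

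The upper bound is elementary: $\shapetensorbh\le\mathbb I$, $m\le\alpha_2$, and enlarging the domain from $\Lambda$ to $\bbR^d$ give $M_\Lambda(\xi)\le\alpha_2 C(d,s)|\xi|^{2s}\mathbb I$, hence $[\bu,\bu]_{S(\bbR^d;k_r)}\le\alpha_2 C(d,s)\,|\bu|_{H^s}^2$; this furnishes $C_1$ in \eqref{equi-korn} for every $r$, including $r=\infty$. For the lower bound I would use $m\ge\alpha_1$ and pass to polar coordinates $\bh=t\omega$, $\omega\in\Sigma:=\mathbb S^{d-1}\cap\Lambda=\mathcal I\cup(-\mathcal I)$; the radial integral equals $\intdmt{0}{\infty}{\bigl|1-\e^{-\mathrm{i}t(\omega\cdot\xi)}\bigr|^{2}t^{-1-2s}}{t}=c_s\,|\omega\cdot\xi|^{2s}$ with $c_s:=\intdmt{0}{\infty}{(2-2\cos u)\,u^{-1-2s}}{u}\in(0,\infty)$ — finiteness of $c_s$ (and of $M_r(\xi)$) is exactly where $s\in(0,1)$ enters, giving integrability near $0$ and near $\infty$. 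Thus
\[
M_\Lambda(\xi)\ge\alpha_1 c_s\,N(\xi),\qquad N(\xi):=\intdm{\Sigma}{|\omega\cdot\xi|^{2s}\,(\omega\otimes\omega)}{\sigma(\omega)},
\]
and it remains to show $N(\xi)\ge c_\Lambda|\xi|^{2s}\mathbb I$ for a constant $c_\Lambda=c_\Lambda(\Lambda,d,s)>0$.

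This positive-definiteness step is the heart of the matter and the part I expect to be the main obstacle: it is the Fourier incarnation of the Korn-type phenomenon that the projected differences $\diffqbu$ control the full difference quotient, and it explains why the support need only contain a cone. The argument: $N$ is continuous on $\bbR^d$ and homogeneous of degree $2s$, so by compactness of $\mathbb S^{d-1}$ it suffices to show $N(\xi)$ is positive definite for every $\xi\neq{\bfs 0}$. If $a^{\intercal}N(\xi)a=0$ for a unit vector $a$, then $|\omega\cdot\xi|^{2s}(\omega\cdot a)^2=0$ for $\sigma$-almost every $\omega\in\Sigma$; but $\{\omega:\omega\cdot\xi=0\}\cup\{\omega:\omega\cdot a=0\}$ is $\sigma$-null in $\mathbb S^{d-1}$ while $\Sigma$ is open with $\sigma(\Sigma)\ge\mathcal H^{d-1}(\mathcal I)>0$, a contradiction — this is the Fourier counterpart of the rigidity statement in \autoref{lem:char-RM}. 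Combining the displays gives $|\bu|_{H^s}^2\le C_2\,[\bu,\bu]_{S(\bbR^d;k_\Lambda)}$ with $C_2=C(d,s)/(\alpha_1 c_s c_\Lambda)$, which settles the case where $\Lambda_{B_r}$ is replaced by $\Lambda$ with $\beta\equiv 0$.

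Finally, to handle the ball restriction I would extend $m$ from $B_r({\bfs 0})$ to an even function $\tilde m$ on $\bbR^d$ with $\alpha_1\le\tilde m\le\alpha_2$ and set $\tilde k_\Lambda(\bx,\by)=\tilde m(\bx-\by)\,|\bx-\by|^{-d-2s}\chi_\Lambda(\bx-\by)$. Then
\[
[\bu,\bu]_{S(\bbR^d;\tilde k_\Lambda)}-[\bu,\bu]_{S(\bbR^d;k_r)}=\intdm{\Lambda\setminus B_r({\bfs 0})}{\frac{\tilde m(\bh)}{|\bh|^{d+2s}}\intdm{\bbR^d}{\Bigl((\bu(\bx)-\bu(\bx-\bh))\cdot\tfrac{\bh}{|\bh|}\Bigr)^{2}}{\bx}}{\bh},
\]
and estimating the inner integrand by $2|\bu(\bx)|^2+2|\bu(\bx-\bh)|^2$ shows this difference is at most $\beta(r)\Vnorm{\bu}_{L^2(\bbR^d)}^2$ with $\beta(r)=c_{d,s}\,\alpha_2\,r^{-2s}\to0$ as $r\to\infty$. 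Applying the full-cone estimate to $\tilde k_\Lambda$ (whose multiplier obeys the same two-sided bound, so $C_2$ is unchanged) gives $|\bu|_{H^s}^2\le C_2\,[\bu,\bu]_{S(\bbR^d;k_r)}+C_2\,\beta(r)\Vnorm{\bu}_{L^2(\bbR^d)}^2$, i.e. the second inequality in \eqref{equi-korn}; the first is the upper bound already obtained. Since $\Vnorm{\cdot}_{S(\bbR^d;k_r)}^2=\Vnorm{\cdot}_{L^2(\bbR^d)}^2+[\cdot,\cdot]_{S(\bbR^d;k_r)}$ and $\Vnorm{\cdot}_{H^s}^2=\Vnorm{\cdot}_{L^2(\bbR^d)}^2+|\cdot|_{H^s}^2$, the two-sided estimate \eqref{equi-korn} yields norm-equivalence, hence $S(\bbR^d;k_r)=H^s(\bbR^d;\bbR^d)$, with all constants depending only on $\alpha_1,\alpha_2,\Lambda,d,s$.
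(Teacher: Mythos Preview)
Your proof is correct and follows essentially the same Fourier--Plancherel route as the paper: both rewrite $[\bu,\bu]_{S(\bbR^d;k_r)}$ as $\int\langle M_r(\xi)\widehat{\bu},\widehat{\bu}\rangle\,\mathrm{d}\xi$, establish the two-sided bound $M_\Lambda(\xi)\asymp|\xi|^{2s}\mathbb I$ by a positivity/compactness argument on the sphere, and handle the truncation to $B_r$ by a tail estimate yielding $\beta(r)\to 0$. The only cosmetic difference is that you evaluate the radial integral in polar coordinates to isolate $N(\xi)=\int_\Sigma|\omega\cdot\xi|^{2s}(\omega\otimes\omega)\,\mathrm{d}\sigma$ before invoking compactness, whereas the paper scales $\bh\mapsto|\xi|\bh$ and works directly with the function $\Psi(\nu,\eta)=\int_\Lambda|\bh|^{-d-2s}(1-\cos(2\pi\nu\cdot\bh))|\eta\cdot\bh/|\bh||^2\,\mathrm{d}\bh$ on $\mathbb S^{d-1}\times\mathbb S^{d-1}$; your version makes the angular structure and the explicit decay $\beta(r)\sim r^{-2s}$ slightly more transparent.
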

\begin{proof}
We prove the lemma using the Fourier transform. First let us introduce the 
following modification $$\tilde{m}({\bfs \xi}) = \left\{ \begin{split}
m({\bfs \xi})&\quad {\bfs \xi}\in \Lambda_{B_r}\\
\alpha_{1} & \quad {\bfs \xi} \in \Lambda \cap \complement B_{r}({\bfs 0}). 
\end{split}
\right.$$
Then $\tilde{m}$ is even, and  $\tilde{m}({\bfs \xi}) \geq \alpha_{1}$ for all 
${\bfs \xi} \in \Lambda$. 
Now, for ${\bf u}\in S(\mathbb{R}^{d};k_r)$
\[
\begin{split}
[{\bf u}, {\bf u}]_{S(\mathbb{R}^{d};k_r)} &+ 
\iintdm{\mathbb{R}^{d} \, \mathbb{R}^{d}}{ \alpha_1\frac{\left| ({\bf u}(\bdy) 
- {\bf u}(\bdx))\cdot \frac{(\bdy - \bdx)}{|\bdy-\bdx|} \right| ^{2}}{|\bdy - 
\bdx|^{d + 2s}}\chi_{\Lambda\cap \complement B_{r}({\bfs 0})}(\bx-\by)}{\bdy}{\bdx} \\
&=\iintdm{\mathbb{R}^{d} \, \mathbb{R}^{d}}{ \tilde{m}({\bdx-\bdy})\frac{ 
\left| ({\bf u}(\bdy) - {\bf u}(\bdx))\cdot \frac{(\bdy - \bdx)}{|\bdy-\bdx|} 
\right|^{2}}{|\bdy - \bdx|^{d + 2s}}\chi_{\Lambda}(\bx-\by)}{\bdy}{\bdx} \\
 &= \iintdm{\mathbb{R}^{d} \, \mathbb{R}^{d}}{ \tilde{m}({\bdx-\bdy})\frac{ 
\left| ({\bf u}(\bdy) - {\bf u}(\bdx))\cdot \frac{(\bdy - \bdx)}{|\bdy-\bdx|} 
\right|^{2}}{|\bdy - \bdx|^{d + 2s}}\chi_{\Lambda}(\bx-\by)}{\bdy}{\bdx}\\
  &= \intdm{\Lambda}{{\tilde{m}({\bf h})\over |\bf h|^{d + 2s}} \|\tau_{{\bf 
h}}{\bf u}\|^{2}_{L^{2}(\mathbb{R}^{d})}}{\bh}
\end{split}
\]
where $\tau_{{\bf h}}{\bf u}(\bdx) = ({\bf u}(\bdx + {\bf h}) - {\bf 
u}(\bdx))\cdot \frac{{\bf h}}{|{\bf h}|}$. Note that the Fourier transform of 
$\tau_{{\bf h}}{\bf u}(\bdx)$ is given by
\[
\mathcal{F}(\tau_{{\bf h}}{\bf u})(\xi) = (e^{\imath 2\pi {\bfs \xi}\cdot {\bf 
h}} -1)\mathcal{F}({\bf u})({\bfs \xi})\cdot  \frac{{\bf h}}{|{\bf h}|}. 
\]
Using Parseval's identity and after a simple calculation we see that
\[
\begin{split}
\|\tau_{{\bf h}}{\bf u}\|_{L^{2}(\mathbb{R}^{d})}^{2} &=2 \intdm{\mathbb{R}^{d}}{ 
 \left|\mathcal{F}({\bf u})({\bfs \xi}) \cdot \frac{{\bf h}}{|{\bf 
h}|}\right|^{2} (1 - \cos(2\pi {\bfs \xi}\cdot {\bf h}))}{{\bfs \xi}}. 
\end{split}
\]
Plugging the last expression in the above semi-norm and interchanging the 
integral we get that 
\[
\begin{split}
\intdm{\Lambda}{{\tilde{m}({\bf h})\over |\bf h|^{d + 2s}} \|\tau_{{\bf h}}{\bf 
u}\|^{2}_{L^{2}(\mathbb{R}^{d})}}{\bh}
&= 2  \intdm{\mathbb{R}^{d}}{ \left[ \intdm{\Lambda} {{\tilde{m}({\bf h})\over |\bf h|^{d 
+ 2s}}\left|\mathcal{F}({\bf u})({\bfs \xi}) \cdot \frac{{\bf h}}{|{\bf 
h}|}\right|^{2} (1 - \cos(2\pi {\bfs \xi}\cdot {\bf h}))}{\bh} \right]}{{\bfs \xi}} \\
&\geq 2\alpha_{1} \intdm{\mathbb{R}^{d}}{ \left[ \intdm{\Lambda}{{(1 - \cos(2\pi {\bfs 
\xi}\cdot {\bf h}))\over |\bf h|^{d + 2s}}\left|\mathcal{F}({\bf u})({\bfs 
\xi}) \cdot \frac{{\bf h}}{|{\bf h}|}\right|^{2}}{\bh} \right]}{{\bfs \xi}}\\
& = 2\alpha_{1} \intdm{\mathbb{R}^{d}}{ |{\bfs \xi}|^{2s} \left[ \intdm{\Lambda}{{(1 - 
\cos(2\pi {{\bfs \xi} \over{|{\bfs \xi}|} }\cdot {\bf h}))\over |\bf h|^{d + 
2s}}\left|\mathcal{F}({\bf u})({\bfs \xi}) \cdot \frac{{\bf h}}{|{\bf 
h}|}\right|^{2}}{\bh} \right] }{{\bfs \xi}},  
\end{split}
\]
where in the last step we have made a change of variables ${\bf h} \mapsto 
|{\bfs \xi}|{\bf h}$ and used the fact that $\Lambda $ remains invariant under 
scaling. Notice that the last inequality can be written as 
\[
\begin{split}
[{\bf u}, {\bf u}]_{S(\mathbb{R}^{d};k)} &+ 
\iintdm{\mathbb{R}^{d} \, \mathbb{R}^{d}}{ \alpha_1\frac{ \left| ({\bf u}(\bdy) 
- {\bf u}(\bdx))\cdot \frac{(\bdy - \bdx)}{|\bdy-\bdx|} \right|^{2}}{|\bdy - 
\bdx|^{d + 2s}}\chi_{\Lambda\cap \complement B_{r}({\bfs 0})}(\bx-\by)}{\bdy}{\bdx} \\
	&\geq  2\alpha_{1} \intdm{\mathbb{R}^{d}}{ |{\bfs \xi}|^{2s}|\mathcal{F}({\bf 
u})({\bfs \xi})|^{2} \Psi\left({\mathcal{F}({\bf u})({\bfs \xi})\over 
|\mathcal{F}({\bf u})({\bfs \xi})|}, {{\bfs \xi} \over{|{\bfs \xi}|} } \right) }{{\bfs \xi}},  
\end{split}
\]
where the map $\Psi: \mathbb{S}^{d-1}\times \mathbb{S}^{d-1} \to [0, \infty)$ 
is given by  
$
\Psi({\bfs \nu}, {\bfs \eta}) = \int_{\Lambda} {(1 - \cos(2\pi {\bfs \nu}\cdot 
{\bf h}))\over |\bf h|^{d + 2s}}\left|{\bfs \eta} \cdot \frac{{\bf h}}{|{\bf 
h}|}\right|^{2} \, \mathrm{d}{\bf h}. 
$
It is not difficult to see that $\Psi $ is a continuous positive function on 
the compact set $\mathbb{S}^{d-1}\times \mathbb{S}^{d-1}$, and therefore has a 
positive minimum, $\Psi_{min}$. As a consequence we have 
\[
\begin{split}
2\alpha_{1} \Psi_{min} |{\bf u}|_{H^{s}}^{2} &= 2\alpha_{1} 
\Psi_{min}\intdm{\mathbb{R}^{d}}{ |{\bfs \xi}|^{2s}|\mathcal{F}({\bf u})({\bfs 
\xi})|^{2}}{{\bfs \xi}}\\
&\leq  [{\bf u}, {\bf u}]_{S(\mathbb{R}^{d};k)} + 
\iintdm{\mathbb{R}^{d} \, \mathbb{R}^{d}}{ \alpha_1\frac{ \left| ({\bf u}(\bdy) - {\bf 
u}(\bdx))\cdot \frac{(\bdy - \bdx)}{|\bdy-\bdx|}\right|^{2}}{|\bdy - \bdx|^{d + 
2s}}\chi_{\Lambda\cap \complement B_{r}({\bfs 0})}(\bx-\by)}{\bdy}{\bdx}. 
 \end{split}
\]
Next, we estimate the second term on the right hand side of the above 
inequality. Again using the Fourier transform we have that 
\[
\begin{split}
\iintdm{\mathbb{R}^{d} \, \mathbb{R}^{d}}{ &\alpha_1\frac{ \left|({\bf u}(\bdy) 
- {\bf u}(\bdx))\cdot \frac{(\bdy - \bdx)}{|\bdy-\bdx|} \right|^{2}}{|\bdy - 
\bdx|^{d + 2s}}\chi_{\Lambda\cap \complement B_{r}({\bfs 0})}(\bx-\by)}{\bdy}{\bdx} \\
&= 2\alpha_{1} \intdm{\mathbb{R}^{d}}{ \intdm{\Lambda\cap \complement B_{r}({\bfs 
0})}{{(1 - \cos(2\pi {\bfs \xi}\cdot {\bf h}))\over |\bf h|^{d + 
2s}}\left|\mathcal{F}({\bf u})({\bfs \xi}) \cdot \frac{{\bf h}}{|{\bf 
h}|}\right|^{2}}{{\bf h}}}{{\bfs \xi}}\\
&\leq 2\alpha_1 \beta(r) \intdm{\mathbb{R}^{d}}{ |\mathcal{F}({\bf u})({\bfs 
\xi})|^2 }{{\bfs \xi}}, 
\end{split}
\]
where $\beta(r) = \int_{\Lambda\cap \complement B_{r}({\bfs 0})} {\mathrm{d}{\bf h}\over 
|{\bf h}|^{d + 2s}}\to 0,$ as $r\to \infty$ and depends only on $d, s, $ and 
$\Lambda$. We conclude that there exists a constant $C>0$ such that for every 
${\bf u}\in S(\mathbb{R}^{d};k)$ we have  
$
|{\bf u}|_{H^{s}}^{2} \leq C [{\bf u}, {\bf u}]_{S(\mathbb{R}^{d};k)} + C 
\beta(r) \|{\bf u}\|_{L^{2}}^{2}. 
$
The bound
\[
[{\bf u}, {\bf u}]_{S(\mathbb{R}^{d};k)} \leq 2 \alpha_2  \Psi_{max} |{\bf 
u}|_{H^{s}}^{2}
\]
can be proved in a similar fashion. 
\end{proof}

\noindent Let us now continue discussing examples of kernels that may satisfy our 
well-posedness result.  

\vspace{0.01in}

\textit{Example 3:} Let $k_r$ be as in \autoref{lem:Korns-lemma}. 
Since  the kernel is symmetric, to check the applicability 
of \autoref{thm:SolutionofD0} for this kernel, we need to verify only the 
Poincar\'e-Korn type inequality.  But this follows from 
\autoref{lem:SufficientPoincareLemma} by taking $\rho({\bfs \xi})= |{\bfs 
\xi}|^2 k_r({\bfs \xi})$, for any $r>0$.  
By above lemma, the space $S(\mathbb{R}^{d};k_r)$ is in fact  
$H^{s}(\mathbb{R}^{d};\mathbb{R}^{d})$.   

\textit{Example 4:} Another nontrivial non-symmetric kernel given in \cite{Kassmann} is 
the following. For $s\in (0,1)$, fix $\alpha\in \left( 0, {s\over 2} \right)$. 
Let $\Lambda$ be a double cone  with apex at the origin. Given the cone 
$\mathcal{C} = \left\lbrace {\bf h}\in B_{1}({\bfs 0}): \frac{{\bfs h}}{{|{\bf 
h}|} } \in \mathcal{J} \right\rbrace$, and $\mathcal{J}$
 is a nontrivial open subset of the unit sphere $\mathbb{S}^{d-1}, $ such that 
$-\mathcal{J} \neq \mathcal{J}$, let us consider the kernel 
 \[
 k(\bdx, \bdy) = \frac{1}{|\bdx-\bdy|^{d+2s}} \chi_{\Lambda}(\by-\bx) + 
\frac{1}{|\bdx-\bdy|^{d+2\alpha}} \chi_{\mathcal{C}}(\by-\bx). 
 \]
 Then the symmetric and antisymmetric part of $k$ are given by 
 \[
 \begin{split}
 k_s(\bx, \by) &= \frac{1}{|\bdx-\bdy|^{d+2s}} \chi_{\Lambda}(\by-\bx) + 
\frac{1}{2}  \frac{1}{|\bdx-\bdy|^{d+2\alpha}} \chi_{\mathcal{C}\cup 
(-\mathcal{C})}(\by-\bx)\\
  k_a(\bx, \by) &=  \frac{1}{2}\frac{1}{|\bdx-\bdy|^{d+2\alpha}} 
\chi_{\mathcal{C}}(\by-\bx) - \frac{1}{2}  \frac{1}{|\bdx-\bdy|^{d+2\alpha}} 
\chi_{(-\mathcal{C})}(\by-\bx).
 \end{split}
 \]
 Conditions \eqref{SM} and \eqref{C1}-\eqref{C2} can be shown as in 
\cite{Kassmann}. Let us show \eqref{k1}-\eqref{k2} with $\tilde{k}(\bx, \by) = 
\frac{1}{|\bx-\by|^{d+2s}}$. Again, \eqref{k2} is shown in \cite{Kassmann} 
where the constant $A_2$ depends on $\mathcal{C}$ and $s-{2\alpha}$, but to 
show \eqref{k1} we use the fact that $k_{s}(\bx, \bdy) \geq 
\frac{1}{|\bdx-\bdy|^{d+2s}} \chi_{\Lambda}(\by-\bx)$. Indeed, 
using \autoref{lem:Korns-lemma} and the remark following it, there exist 
constants $c_1, c_2 > 0$ such that 
\[
\begin{split}
\iintdm{\bbR^d\,  \bbR^d}{\tilde{k}(\bdx, \bdy)\left(({\bf u}(\by) - 
{\bf u}(\bdx))\cdot{\bdy-\bdx \over |\bdy-\bdx|}\right)^2}{\bdx}{\bdy} &\leq 
c_1|{\bf u}|^{2}_{H^{s}} \\
& \leq c_{2} \, \iintdm{\bbR^d\,  \bbR^d} 
{\frac{\chi_{\Lambda}(\by-\bx)}{|\bdx-\bdy|^{d + 2s}} 
|\cD(\bu)(\bx,\by)|^2} {\bdx} {\bdy} \\
&\leq  2c_2 [{\bf u}, {\bf u}] _{S(\mathbb{R}^{d};k)}\,.
\end{split}
 \]
The Poincar\'e-Korn inequality $(PK)$ now follows from the standard Fractional 
Poincar\'e inequality, because the function space $S(\mathbb{R}^{d};k)$ 
coincides with $H^{s}(\mathbb{R}^{d};\mathbb{R}^{d})$, and because by 
\autoref{lem:Korns-lemma} \\ 
$$
|{\bf u}|^{2}_{H^{s}} \leq C [{\bf u}, {\bf u}] 
_{S(\mathbb{R}^{d};k)}.
$$ 

\subsubsection{Variants of the Dirichlet problem}

As indicated earlier in the proof of \autoref{thm:SolutionofD0}, conditions 
\eqref{C1}-\eqref{C2} on the kernel $k$ are used to show the positive 
semi-definiteness of the bilinear form on $S(\mathbb{R}^{d}; k)$.  There are 
however kernels for which either these conditions are not true or difficult to 
verify. For this class of kernels, well-posedness of the Dirichlet problem 
corresponding to the addition of a positive multiple of the identity operator 
can be obtained.

\begin{proposition}\label{prop:withbeta}
Let $\Omega \subset \bbR^d$ be open, bounded. Let $k$ satisfy \eqref{SM} and 
\eqref{k1}-\eqref{k2}. Assume also that $(PK)$ holds. Then there exists 
$\beta_{0} > 0$ such that for any $\beta > \beta_{0}$ and any ${\bf f} \in 
S^{\ast}_{\Omega}(\bbR^d;k)$, there exists a unique solution $\bu \in 
\HOmegaRd$ to 
\begin{equation}
\begin{cases}
\mathbb{ L}\bu  + \beta {\bf u}= {\bf f} & \quad \text{in } \Omega\,, \\
\bu = {\bfs 0} & \quad \text{on } \complement \Omega\,.
\end{cases}
\end{equation}
Moreover, there exists a constant $c>0$ independent of ${\bf f}$ such that 
 \[
 [{\bf u}, {\bf u}]_{\HOmegaRd} \leq c \|{\bf 
f}\|_{S^{*}_{\Omega}(\mathbb{R}^{d}; k)}. 
 \]
\end{proposition}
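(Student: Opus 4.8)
The plan is to apply the Lax--Milgram theorem to the bilinear form $\cB(\bu,\bv) := \cF^k(\bu,\bv) + \beta\,(\bu,\bv)_{L^2(\bbR^d)}$ on the Hilbert space $\HOmegaRd$, exactly in the spirit of the proof of \autoref{thm:SolutionofD0}; the only new feature is that the zeroth-order term $\beta\,(\bu,\bv)_{L^2(\bbR^d)}$ will be used to generate coercivity \emph{without} appealing to the cancellation conditions \eqref{C1}--\eqref{C2}. Continuity of $\cB$ on $\HOmegaRd$ is immediate: $\cF^k$ is continuous on $S(\bbR^d;k)$ by \autoref{prop:WellDefinedLandF}, and $(\bu,\bv)_{L^2(\bbR^d)}$ is trivially continuous because $\Vnorm{\cdot}_{L^2(\bbR^d)} \le \Vnorm{\cdot}_{\HRd}$.

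The substance of the proof is coercivity. Rather than showing that the antisymmetric part of $\cF^k(\bu,\bu)$ is nonnegative (the role that \eqref{C1}--\eqref{C2} played in \autoref{thm:SolutionofD0}), I would only bound it from below. First I would reuse the pointwise Young inequality from the proof of \autoref{prop:WellDefinedLandF}, but now with a free parameter $\eta>0$:
\[
\Bigl| k_a(\bx,\by)\,\mathcal{D}({\bf u})(\bdx,\bdy)\,\diffbux \Bigr| \le \frac{\eta}{2}\,\tilde{k}(\bx,\by)\,|\mathcal{D}({\bf u})(\bdx,\bdy)|^2 + \frac{1}{2\eta}\,\frac{k_a^2(\bx,\by)}{\tilde{k}(\bx,\by)}\,|\bu(\bx)|^2 .
\]
Integrating over $\bbR^d\times\bbR^d$ and applying \eqref{k1} to the first term and \eqref{k2} to the second bounds the antisymmetric term of $\cF^k(\bu,\bu)$ in absolute value by $\tfrac{\eta A_1}{2}\Vnorm{\bu}_{\HRd}^2 + \tfrac{A_2}{2\eta}\Vnorm{\bu}_{L^2(\bbR^d)}^2$. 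Expanding $\Vnorm{\bu}_{\HRd}^2 = \Vnorm{\bu}_{L^2(\bbR^d)}^2 + [\bu,\bu]_{\HRd}$ and then fixing $\eta$ small enough (concretely $\eta = 1/(2A_1)$, so that the resulting coefficient of $[\bu,\bu]_{\HRd}$ stays strictly below $\tfrac12$) leads to
\[
\cF^k(\bu,\bu) \ge \tfrac14\,[\bu,\bu]_{\HRd} - \beta_0\,\Vnorm{\bu}_{L^2(\bbR^d)}^2, \qquad \beta_0 := \tfrac14 + A_1 A_2 .
\]
Hence, for every $\beta > \beta_0$ one gets $\cB(\bu,\bu) \ge \tfrac14[\bu,\bu]_{\HRd} + (\beta-\beta_0)\Vnorm{\bu}_{L^2(\bbR^d)}^2 \ge \min\{\tfrac14,\beta-\beta_0\}\Vnorm{\bu}_{\HRd}^2$, which is the required coercivity on $\HOmegaRd$. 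Note that $(PK)$ is not needed for this step; I would use it only at the very end, to express the a priori bound through the seminorm $[\,\cdot\,,\,\cdot\,]_{\HOmegaRd}$, since on $\HOmegaRd$ the inequality $(PK)$ makes this seminorm equivalent to $\Vnorm{\cdot}_{\HRd}^2$.

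With continuity and coercivity established, the Lax--Milgram theorem produces a unique $\bu\in\HOmegaRd$ with $\cB(\bu,{\bfs\vphi}) = ({\bf f},{\bfs\vphi})_{L^2(\bbR^d)}$ for all ${\bfs\vphi}\in\HOmegaRd$, which is precisely the weak formulation of $\mathbb{L}\bu+\beta\bu={\bf f}$ in $\Omega$, $\bu={\bfs 0}$ on $\complement\Omega$; testing with ${\bfs\vphi}=\bu$ and combining the coercivity estimate with $(PK)$ yields the stated bound, just as in \autoref{thm:SolutionofD0}. I expect the one delicate point to be the bookkeeping in the coercivity step: the ``bad'' multiple of $[\bu,\bu]_{\HRd}$ coming from the antisymmetric part must be absorbed into the $\tfrac12[\bu,\bu]_{\HRd}$ supplied by the symmetric part (this dictates the choice of $\eta$), after which the residual multiple of $\Vnorm{\bu}_{L^2(\bbR^d)}^2$ is absorbed by enlarging $\beta$ past $\beta_0$. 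No assumption on $k$ beyond \eqref{SM} and \eqref{k1}--\eqref{k2} is used --- in particular \eqref{C1}--\eqref{C2} are irrelevant --- which is exactly the advantage of adding $\beta\bu$.
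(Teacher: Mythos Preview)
Your proposal is correct and follows essentially the same route as the paper: both establish a G\aa rding-type estimate $\cF^k(\bu,\bu)\ge \tfrac14[\bu,\bu]_{\HRd}-\gamma\Vnorm{\bu}_{L^2}^2$ by applying Young's inequality with a free parameter to the antisymmetric term and invoking \eqref{k1}--\eqref{k2}, after which Lax--Milgram applies to $\cF^k+\beta(\cdot,\cdot)_{L^2}$ for $\beta$ large. Your write-up is in fact more explicit than the paper's (which just says ``the proof follows from standard arguments'' after the G\aa rding estimate), including your observation that $(PK)$ is used only at the end to convert the a priori bound into the seminorm form.
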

\begin{proof}
The proof follows from standard arguments once Gårding-type estimates are 
established. To that end, we show that there is a constant $\gamma = 
\gamma(A_1,A_2) > 0$ such that
\begin{equation}\label{GardingsInequality}
\cF^k(\bu,\bu) \geq \frac{1}{4} \Vert \bu \Vert^2_{\HRd} - \gamma \Vert \bu 
\Vert^2_{L^2(\bbR^d)} \quad \text{for all }\bu \in \HRd.
\end{equation}
To prove this, let $\bu \in \HRd$.  From \eqref{k1}-\eqref{k2} and by Young's 
inequality,
\begin{align*}
\cF^k(\bu,\bu) &\geq \frac{1}{2}\iintdm{\bbR^d \, \bbR^d}{k_s(\bx,\by) 
(\mathcal{D}({\bf u})(\bdx, \bdy))^2}{\by}{\bx}  - \iintdm{\bbR^d \, 
\bbR^d}{k_a(\bx,\by) |\mathcal{D}({\bf u})(\bdx, \bdy)| \left| \bu(\bx) \cdot 
\frac{\bx-\by}{|\bx-\by|} \right|}{\by}{\bx} \\
&\geq \frac{1}{2}[\bu,\bu]_{\HRd} - \iintdm{\bbR^d \, \bbR^d}{|\mathcal{D}({\bf 
u})(\bdx, \bdy)| \tilde{k}^{1/2}(\bx,\by) 
|\bu(\bx)|k_a(\bx,\by)\tilde{k}^{-1/2}(\bx,\by)}{\by}{\bx} \\
&\geq \frac{1}{2}[\bu,\bu]_{\HRd} - \iintdm{\bbR^d \, \bbR^d}{\left( \veps 
|\mathcal{D}({\bf u})(\bdx, \bdy)|^2 \tilde{k}(\bx,\by) + 
\frac{1}{4\veps}|\bu(\bx)|^2 k_a^2(\bx,\by) 
\tilde{k}^{-1}(\bx,\by)\right)}{\by}{\bx} \\
&\geq \frac{1}{4}[\bu,\bu]_{\HRd} - C(\epsilon)\Vert \bu \Vert^2_{L^2(\bbR^d)} 
\\
&\geq \frac{1}{4}\Vert \bu \Vert^2_{\HRd} - \gamma \Vert \bu 
\Vert^2_{L^2(\bbR^d)},
\end{align*}
if $\veps$ is chosen sufficiently small such that $A_1 \epsilon < 1/4$ and then 
$\gamma = \gamma(A_1, A_2)$ chosen sufficiently large.
\end{proof}
We next discuss an example of a nontrivial  kernel that satisfies all the 
conditions of the proposition. The example is taken from \cite{Schilling, 
Fukushima, Kassmann} and discussed in detail there. For two given positive 
numbers $0<\alpha_{1}\leq \alpha_{2}<2$, let $\alpha:\mathbb{R}^{d}\to 
[\alpha_{1}, \alpha_{2}] $ be a continuous function, with its modulus of 
continuity $\omega[\alpha]$ satisfying 
$\int_{0}^{1}\frac{(\omega[\alpha](r)\ln(r))^{2}}{r^{1 + \alpha_2}} \mathrm{d}r < 
\infty$.  We introduce the non-symmetric kernel 
\[
k(\bdx, \bdy) = \frac{b(\bdx)}{|\bdx-\bdy|^{d + \alpha(\bdx)}},
\]
where $b(\bdx)$ is a continuous function bounded from below and  above by 
positive numbers and satisfying the inequality $|b(\bdx)-b(\bdy)|\leq 
c|\alpha(\bdx) - \alpha(\bdy)|$ for some  $c>0$ provided $|\bdx - \bdy| < 1$. 
To see if \autoref{prop:withbeta} applies to this kernel, we need to verify 
\eqref{SM},  \eqref{k1}-\eqref{k2} and $(PK)$. It has been shown in 
\cite{Schilling} that this kernel satisfies \eqref{SM} and 
\eqref{k1}-\eqref{k2}, with $\tilde{k}$ taken to be the symmetric part $k_{s}$ 
of $k$.  What remains is the show the  Poincar\'e-Korn inequality $(PK)$ holds 
for $k$. But this follows from \autoref{lem:SufficientPoincareLemma} and the 
fact that $k_{s}(\bdx, \bdy) \geq \frac{b_{min}}{|\bdx-\bdy|^{d + \alpha_{1}}}$ 
when $|\bdx - \bdy| < 1$. 

We would like to mention that in \cite{Kassmann} for the modified kernel 
$k'(\bx, \by) = \chi_{B_{R}({\bfs 0})} (\by-\bx) k(\bdx, \bdy)$, for $1\ll R$,  
the Dirichlet problem for scalar equations is shown to be well-posed even for 
$\beta = 0$, see \cite[Theorem 4.4]{Kassmann}. This was possible using the 
Fredholm Alternative theorem via the application of the weak maximum principle 
that is used to prove uniqueness of the solution to the Dirichlet problem with 
zero right hand side.   Following the argument in \cite{Kassmann}, one can 
write a Fredholm Alternative theorem for the Dirichlet problem $(D_0)$ of the 
system of nonlocal equations. However, since we are dealing with system of 
equations a maximum principle is not applicable and we are unable to show 
uniqueness of the solution of the linear system of equations $(D_0)$.   The 
uniqueness of the zero solution $(D_0)$ corresponding to ${\bf f}=0$ under the 
assumption of \autoref{prop:withbeta} or even the stronger assumption on $k$ 
given in \cite[
Theorem 4.4]{Kassmann} remains an open problem.  

We end this section by noting that well-posedness of the Dirichlet problem with 
nonzero complementary data can also be proved. To that end, again following the 
set up in \cite{Kassmann}, let us introduce the function space 
\[
V(D;k) = \left\{ \bv:\bbR^d \to \bbR^d \,: \, \bv|_{D} \in 
L^2(D;\mathbb{R}^{d}), (\bv(\bx)-\bv(\by)) \cdot 
\frac{(\bx-\by)}{|\bx-\by|}k_s^{1/2}(\bx,\by) \in L^2(D \times \bbR^d) 
\right\}\,.
\]
The mapping $[\bu, \bv]_{V(D;k)}$ given by 
\[
[ \bu,\bv ]_{V(D;k)} := \iintdm{D \, \bbR^d}{k_s(\bx,\by) \diffqbu 
\diffqbv}{\by}{\bx}
\]
defines a bilinear form on $V(D;k)$. In the event  that $D = \mathbb{R}^{d}$, 
it is clear that $V(\bbR^d;k)=\HRd, $ where $\HRd$ is defined in the 
introduction.   
For a given ${\bf g} \in V(\Omega;k)$, we say $\bu \in V(\Omega;k)$ is called a 
solution of
\begin{equation}
\mathbb{ L}\bu = {\bf f}  \quad \text{in } \Omega, \quad\quad
\bu = \bg \quad \text{on } \complement \Omega,
\tag{$D$}
\end{equation}
if $\bu-\bg \in \HOmegaRd$ and \eqref{weak-pairing} holds. 

We now state the well-posedness of the Dirichlet Problem. We omit the proof here 
as it can be done following the argument in  \cite{Kassmann}.  

\begin{theorem}\label{thm:WellPosednessofDLaxMilgram}
Let $\Omega \subset \bbR^d$ be open, bounded. Let $k$ be a kernel that 
satisfies \eqref{SM}, \eqref{C1}-\eqref{C2}, and  $(PK)$. Assume further that 
there exists a $\tilde{k}$ such that for all ${\bf u} \in V(\Omega;k)$
\begin{align}\label{eq:k1-omega}
\begin{split}
&\iintdm{\Omega \, \bbR^d}{\diffqu^2 \tilde{k}(\bdx,\bdy)}{\bdy}{\bdx} \\
&\quad \leq A_1 
\iintdm{\Omega \, \bbR^d}{\diffqu^2 k_s(\bdx,\bdy)}{\bdy}{\bdx}
\end{split}
\end{align}
and such that \eqref{k2} holds for this $\tilde{k}$. Then $(D)$ has a unique 
solution ${\bf u} \in V(\Omega;k)$, with
\begin{equation}\label{SolutionEstimate}
[{\bf u},{\bf u}]_{V(\Omega;k)} \leq C \left( \Vert f 
\Vert^2_{S^*_{\Omega}(\bbR^d;k)} + [g,g]_{V(\Omega;k)} \right)\,,
\end{equation}
where $C = C(C_P,A_1,A_2) > 0$.
\end{theorem}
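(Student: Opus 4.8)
The plan is to reduce $(D)$ to a homogeneous problem on the Hilbert space $\HOmegaRd$ and then apply the Lax--Milgram theorem, following the proof of \autoref{thm:SolutionofD0} and keeping track of the extra term generated by the data $\bg$. First I would note that, for fixed $\bg\in V(\Omega;k)$, a function $\bu$ solves $(D)$ if and only if $\bv:=\bu-\bg$ belongs to $\HOmegaRd$ and
\[
\cF^k(\bv,{\bfs \vphi}) = ({\bf f},{\bfs \vphi})_{L^2(\bbR^d)} - \cF^k(\bg,{\bfs \vphi}) \qquad \text{for every } {\bfs \vphi}\in\HOmegaRd,
\]
which is immediate from the bilinearity of $\cF^k$ and the inclusion $\HOmegaRd\subseteq V(\Omega;k)$. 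So the task is to solve $\cF^k(\bv,\cdot)=\ell$ on $\HOmegaRd$, where $\ell({\bfs \vphi}):=({\bf f},{\bfs \vphi})_{L^2(\bbR^d)}-\cF^k(\bg,{\bfs \vphi})$. I would then record that the restriction of $\cF^k$ to $\HOmegaRd$ is continuous and coercive: the coercivity bound $\cF^k(\bv,\bv)\ge\tfrac{1}{4C_P}\Vnorm{\bv}_{\HRd}^2$ for $\bv\in\HOmegaRd$ is precisely the computation carried out in the proof of \autoref{thm:SolutionofD0} and uses only \eqref{C1}--\eqref{C2} and $(PK)$, while continuity is the estimate \eqref{ContinuityofEEstimates}; here one checks that the latter goes through with the localized hypothesis \eqref{eq:k1-omega} in place of the global \eqref{k1}, because in that estimate the second argument of $\cF^k$ enters only through a plain point value, which vanishes off $\Omega$, so the $\tilde k$-energy of the first argument is needed only over $\Omega\times\bbR^d$.

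The one genuinely new ingredient, which I expect to be the main obstacle, is to make sense of and bound ${\bfs \vphi}\mapsto\cF^k(\bg,{\bfs \vphi})$ on $\HOmegaRd$; this is not automatic because $\bg\in V(\Omega;k)$ need not lie in $\HRd$, so $\cD(\bg)\,k_s^{1/2}$ is only square integrable over $\Omega\times\bbR^d$ rather than over $\bbR^d\times\bbR^d$. The resolution I would use is that every ${\bfs \vphi}\in\HOmegaRd$ vanishes on $\complement\Omega$. For the symmetric part of $\cF^k(\bg,{\bfs \vphi})$, the factor $\cD({\bfs \vphi})(\bx,\by)$ vanishes when $\bx,\by\in\complement\Omega$, so Cauchy--Schwarz confines the $\bg$-factor to $(\Omega\times\bbR^d)\cup(\bbR^d\times\Omega)$, on which $\iint k_s\,\cD(\bg)^2\le 2[\bg,\bg]_{V(\Omega;k)}$ since $k_s(\bx,\by)$ and $\cD(\bg)(\bx,\by)^2$ are invariant under $\bx\leftrightarrow\by$. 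For the antisymmetric part, inserting $\tilde k^{1/2}\tilde k^{-1/2}$ as in \autoref{prop:WellDefinedLandF} and using that ${\bfs \vphi}(\bx)=0$ off $\Omega$, the factor $\bigl(\int_\Omega|{\bfs \vphi}(\bx)|^2\int_{\bbR^d}k_a^2/\tilde k\bigr)^{1/2}$ is controlled by $A_2^{1/2}\Vnorm{{\bfs \vphi}}_{L^2(\bbR^d)}$ through \eqref{k2}, and $\bigl(\iint_{\Omega\times\bbR^d}\tilde k\,\cD(\bg)^2\bigr)^{1/2}\le A_1^{1/2}[\bg,\bg]_{V(\Omega;k)}^{1/2}$ through \eqref{eq:k1-omega}. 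Combining, $|\cF^k(\bg,{\bfs \vphi})|\le C(A_1,A_2)\,[\bg,\bg]_{V(\Omega;k)}^{1/2}\Vnorm{{\bfs \vphi}}_{\HRd}$, so $\ell$ is bounded on $\HOmegaRd$ with $\Vnorm{\ell}_{S^{*}_{\Omega}(\bbR^d;k)}\le\Vnorm{{\bf f}}_{S^{*}_{\Omega}(\bbR^d;k)}+C(A_1,A_2)[\bg,\bg]_{V(\Omega;k)}^{1/2}$.

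Given boundedness of $\ell$, the Lax--Milgram theorem produces a unique $\bv\in\HOmegaRd$ with $\cF^k(\bv,\cdot)=\ell$; then $\bu:=\bg+\bv$ lies in $V(\Omega;k)$, satisfies $\bu-\bg\in\HOmegaRd$ and, for every ${\bfs \vphi}\in\HOmegaRd$, $\cF^k(\bu,{\bfs \vphi})=\cF^k(\bv,{\bfs \vphi})+\cF^k(\bg,{\bfs \vphi})=({\bf f},{\bfs \vphi})_{L^2(\bbR^d)}$, so it solves $(D)$. Uniqueness follows because the difference of two solutions lies in $\HOmegaRd$ and is $\cF^k$-orthogonal to $\HOmegaRd$, hence is zero by coercivity. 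Finally, testing $\cF^k(\bv,\cdot)=\ell$ against $\bv$ gives $\tfrac{1}{4C_P}\Vnorm{\bv}_{\HRd}^2\le\ell(\bv)\le\Vnorm{\ell}_{S^{*}_{\Omega}(\bbR^d;k)}\Vnorm{\bv}_{\HRd}$, so $\Vnorm{\bv}_{\HRd}\le4C_P\Vnorm{\ell}_{S^{*}_{\Omega}(\bbR^d;k)}$; then $[\bu,\bu]_{V(\Omega;k)}^{1/2}\le[\bv,\bv]_{V(\Omega;k)}^{1/2}+[\bg,\bg]_{V(\Omega;k)}^{1/2}\le\Vnorm{\bv}_{\HRd}+[\bg,\bg]_{V(\Omega;k)}^{1/2}$, and squaring and inserting the bound on $\Vnorm{\ell}$ yields \eqref{SolutionEstimate} with $C=C(C_P,A_1,A_2)$. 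Apart from the boundedness of $\cF^k(\bg,\cdot)$ just discussed, every step is a direct transcription of the argument for $(D_0)$.
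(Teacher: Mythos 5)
Your proposal is correct and follows the route the paper itself indicates: the authors explicitly omit the proof of \autoref{thm:WellPosednessofDLaxMilgram} and say it "can be done following the argument in \cite{Kassmann}," and the argument you give — translate to a homogeneous problem, verify that $\bfs\vphi\mapsto\cF^k(\bg,\bfs\vphi)$ defines a bounded functional on $\HOmegaRd$ using the localized hypotheses \eqref{eq:k1-omega} and \eqref{k2} together with the vanishing of test functions off $\Omega$, invoke the coercivity already established in the proof of \autoref{thm:SolutionofD0}, and apply Lax--Milgram — is precisely that argument. You correctly identify the only genuinely new point, namely the boundedness of $\cF^k(\bg,\cdot)$ for $\bg$ merely in $V(\Omega;k)$, and handle both the symmetric and antisymmetric pieces in the right way, so nothing is missing.
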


\begin{remark}
Condition \eqref{eq:k1-omega} obviously holds if one chooses 
$\tilde{k}(\bdx,\bdy) = k_s(\bdx,\bdy)$. The integration allows for more 
flexibility here, see \cite{Kassmann} for examples. Note that 
\autoref{thm:WellPosednessofDLaxMilgram} opens up an interesting question 
concerning data on $\complement \Omega$. The result requires ${\bf g} \in 
V(\Omega;k)$, i.e., the data is given in all of $\mathbb{R}^d$. This condition 
is similar to the condition $g \in H^1(\Omega)$ when searching for a 
solution $v$ solving some partial differential equation of second order in 
$\Omega$ with $v-g \in H^1_0(\Omega)$. From point of view of applications it is 
desirable to prescribe $g$ only on the complement $\complement \Omega$ and to 
have some extension theorem. Such results are nowadays standard for classical 
Sobolev function spaces. They put into relation the trace space 
$H^{\frac{1}{2}}(\Omega)$ with $H^1(\Omega)$. For spaces with fractional order 
of derivative, a similar relation has been addressed in 
\cite{DyKa16}.

\end{remark}

\section{Interior Regularity of Solutions}\label{sec:regularity}
\subsection{Setup and main results}
We now turn to the question of regularity of solutions. We want to answer the 
following question: if the data ${\bf f}$ are in $ 
L^{p}(\Omega;\mathbb{R}^{d})$, what is the optimal space for the weak 
solution ${\bf u}$ of the Dirichlet problem of the system of nonlocal 
equations ($D_0$)? From the existence result proved in the previous 
section, if ${\bf f}\in S^{*}_{\Omega}(\mathbb{R}^{d}; k)$ then ${\bf u}\in 
S_{\Omega}(\mathbb{R}^{d}; k)$, which is the largest space to which the 
solution can belong. This space is, in general, not the optimal space.  
Moreover, for general kernels $k$ there is no good characterization of the space 
or other finer subspaces in which the solution may live. With this in mind, in 
this section we give a partial result concerning regularity of solutions. The 
result applies to systems of equations with leading operator $\mathbb{L}$ 
defined using an even function comparable with the fractional kernel. To be 
precise, let $s\in (0, 1)$ and $m({\bfs \xi})$ be an even 
function with the property that $0<\alpha_1 \leq m({\bfs \xi}) \leq \alpha_{2} < 
\infty$ for some positive constants $\alpha_1$ and $\alpha_2$.  For a given 
$\Lambda$, a double cone with apex at the origin, and $0< r \leq \infty$ we 
consider translation-invariant kernels that may be supported on $\Lambda$: 
 \begin{equation}\label{form-k}
 k_r(\bdx-\bdy) = \frac{m({\bdx-\bdy})  }{|\bdx - \bdy|^{d + 
2s}}\chi_{\Lambda_{B_r}}(\bx-\by).  
 \end{equation}
For kernels of this form we have shown in \autoref{lem:Korns-lemma} that $ 
\HRd= H^{s}(\mathbb{R}^{d};\mathbb{R}^{d})$.  Noting that 
$H_{\Omega}(\mathbb{R}^{d};k) = 
L^{2}_{\Omega}(\mathbb{R}^{d};\mathbb{R}^{d})\cap \HRd$, we have that 
$S_{\Omega}(\mathbb{R}^{d};k) = L^{2}_{\Omega}(\mathbb{R}^{d};\mathbb{R}^{d}) 
\cap H^{s}(\mathbb{R}^{d};\mathbb{R}^{d})$. We denote the latter set by 
$H^{s}_{\Omega}(\mathbb{R}^{d};\mathbb{R}^{d})$. We also denote 
\[
H^{2s}_{loc}(\Omega;\mathbb{R}^{d}) = \{{\bf u}\in L^{2}(\Omega; 
\mathbb{R}^{d}): \eta {\bf u} \in H^{2s}(\mathbb{R}^{d};\mathbb{R}^{d}), \quad 
\forall \eta \in C_{c}^{\infty}(\Omega)\}. \]
We also need the following potential spaces. If we denote ${\bf u} \in 
\mathcal{S}'$, the space of tempered distributions, then 
$$
\cL^{s,p}(\bbR^d) := \{ {\bf u} \in \mathcal{S}':  \mathcal{F}^{-1}[(1+|{\bfs 
\xi}|^{2})^{s/2}\mathcal{F}({\bf u})]\in  L^p(\bbR^d;\mathbb{R}^{d}) \}, 
$$ 
where $1 < p < \infty$ and $s \geq 0$. The space is equivalent to $\{{\bf u}\in 
L^{p}: (-\Delta)^{s/2} {\bf u}\in  L^{p}\}$, where $(-\Delta)^{s/2}$ is the
standard 
fractional Laplacian operator applied componentwise.  We also denote 
\[
\mathcal{L}^{s,p}_{loc}(\Omega;\mathbb{R}^{d}) := \{{\bf u}\in L^{p}(\Omega; 
\mathbb{R}^{d}): \eta {\bf u} \in 
\mathcal{L}^{s,p}(\mathbb{R}^{d};\mathbb{R}^{d}), \quad \forall \eta \in 
C_{c}^{\infty}(\Omega)\}. \]
When $p=2$, $\cL^{s,2}(\bbR^d) = H^{s}(\mathbb{R}^{d};\mathbb{R}^{d})$. 

The following theorem contains one of the results of this section. 

\begin{theorem}\label{thm:InteriorRegularity-L2MainTheorem}
Assume that $k_r$ has the form \eqref{form-k}. 
Let $\Omega \subset \bbR^d$ be a bounded open set, ${\bf f} \in 
L^2_{\Omega}(\bbR^d;\mathbb{R}^{d})$ and  ${\bf u} \in 
S_{\Omega}(\mathbb{R}^{d};k)$ be the unique weak solution to the system 
\begin{equation}\label{eq-InteriorRegularity-DirichletProblem}
\begin{cases}
	\mathbb{L}{\bf u} = {\bf f}  & \text{ on } \Omega \\
	{\bf u} = 0 & \text{ on } \complement \Omega.
\end{cases}
\end{equation}
Then ${\bf u} \in H^{2s}_{loc}(\Omega;\mathbb{R}^{d})$. Moreover, for any $\eta 
\in C_c^{\infty}(\Omega)$, there exists a constant $C>0$ depending on $\eta$ 
such that 
\[
|\eta{\bf u}|_{H^{2s}} \leq C \|{\bf f}\|_{L^{2}}. 
\]
\end{theorem}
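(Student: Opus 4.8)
The plan is to localise the equation: for a fixed cutoff $\eta\in C_c^\infty(\Omega)$ I would derive an equation for $\eta\bu$ on all of $\bbR^d$ whose right-hand side lies in $L^2$, and then invert $\mathbb{L}$ on $\bbR^d$ at the level of its Fourier symbol. This follows the scheme of \cite{Biccari-Warma} for the fractional Laplacian, adapted to the matrix operator. Since $\bu$ solves $(D_0)$ we already know, by \autoref{lem:Korns-lemma}, that $\bu\in S_\Omega(\bbR^d;k_r)=L^2_\Omega(\bbR^d)\cap H^s(\bbR^d;\bbR^d)$, hence $\eta\bu\in H^s(\bbR^d;\bbR^d)$, and the point is to upgrade the order from $s$ to $2s$. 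Working first with the truncated operators $\mathbb L_n$ of \autoref{prop:WellDefinedLandF}, a direct rearrangement of the integrals (valid pointwise a.e.\ because $k_r\chi_{D_n}\in L^1$) gives the commutator identity $\mathbb L_n(\eta\bu)=\eta\,\mathbb L_n\bu+\cC^n_\eta\bu$ on $\bbR^d$, with
\[
\cC^n_\eta\bu(\bx)=\int_{|\bx-\by|>1/n}k_r(\bx-\by)\,\big(\eta(\bx)-\eta(\by)\big)\Big(\bu(\by)\cdot\tfrac{\bx-\by}{|\bx-\by|}\Big)\tfrac{\bx-\by}{|\bx-\by|}\,\mathrm{d}\by .
\]
Substituting $\bu(\by)=\bu(\bx)-(\bu(\bx)-\bu(\by))$ splits this as $\cC^n_\eta\bu=A_n\bu+R_n$, where $A_n(\bx)=\int_{|\bx-\by|>1/n}k_r(\bx-\by)(\eta(\bx)-\eta(\by))\shapetensorbxy\,\mathrm{d}\by$ and $R_n(\bx)=-\int_{|\bx-\by|>1/n}k_r(\bx-\by)(\eta(\bx)-\eta(\by))\,\mathcal{D}(\bu)(\bx,\by)\,\tfrac{\bx-\by}{|\bx-\by|}\,\mathrm{d}\by$.

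Next I would pass to the limit $n\to\infty$ and show $\cC^n_\eta\bu\to\cC_\eta\bu$ in $L^2(\bbR^d;\bbR^d)$ with $\|\cC_\eta\bu\|_{L^2}\le C(\eta)\|\bu\|_{H^s}$. For the matrices $A_n$: since $\eta\in C_c^\infty$ and $k_r$, $\Lambda$ are even, the first-order Taylor term $\nabla\eta(\bx)\cdot(\bx-\by)$ of $\eta(\bx)-\eta(\by)$ is odd in $\bx-\by$ and hence integrates to zero over the symmetric annuli $\{1/n<|\bx-\by|<1\}\cap\Lambda$; only the quadratic Taylor remainder $O(|\bx-\by|^2)$ and the far region $|\bx-\by|\gtrsim1$ survive, and the estimates $\int_{|\bz|<1}|\bz|^{2-d-2s}\,\mathrm{d}\bz<\infty$ (valid because $s<1$) and $\int_{1<|\bz|<r}|\bz|^{-d-2s}\,\mathrm{d}\bz<\infty$ give $A_n(\bx)\to A(\bx)$ pointwise, with $A$ bounded and supported in a neighbourhood of $\supp\eta$; this step uses only the evenness of $m$, not any regularity. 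Hence $A_n\bu\to A\bu$ in $L^2$ with $\|A\bu\|_{L^2}\le C(\eta)\|\bu\|_{L^2}$. For $R_n$, Cauchy--Schwarz in $\by$ bounds $|R_n(\bx)|$ by $\big(\int|\eta(\bx)-\eta(\by)|^2k_r(\bx-\by)\,\mathrm{d}\by\big)^{1/2}\big(\int|\mathcal{D}(\bu)(\bx,\by)|^2k_r(\bx-\by)\,\mathrm{d}\by\big)^{1/2}$; the first factor is bounded uniformly in $\bx$ (again by $s<1$ and smoothness of $\eta$), while the square of the second integrates in $\bx$ to $[\bu,\bu]_{S(\bbR^d;k_r)}$, so $R_n\to R$ in $L^2$ with $\|R\|_{L^2}\le C(\eta)[\bu,\bu]_{S(\bbR^d;k_r)}^{1/2}\le C(\eta)\|\bu\|_{H^s}$. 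On the other side, testing $\mathbb L_n\bu$ against $\eta{\bfs \vphi}$ for ${\bfs \vphi}\in C_c^\infty(\bbR^d;\bbR^d)$, using $\big(\mathbb L_n\bu,\eta{\bfs \vphi}\big)_{L^2}=\cF^{k_r}_n(\bu,\eta{\bfs \vphi})$ (Fubini, as in \autoref{prop:WellDefinedLandF}) together with the weak formulation of $(D_0)$ and the limit relations of \autoref{prop:WellDefinedLandF}, gives $\big(\eta\,\mathbb L_n\bu,{\bfs \vphi}\big)_{L^2}\to\big({\bf f},\eta{\bfs \vphi}\big)_{L^2}=\big(\eta{\bf f},{\bfs \vphi}\big)_{L^2}$. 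Passing to the limit in the commutator identity then yields, for every ${\bfs \vphi}\in C_c^\infty(\bbR^d;\bbR^d)$,
\[
\cF^{k_r}(\eta\bu,{\bfs \vphi})=\big(\eta{\bf f}+\cC_\eta\bu,\,{\bfs \vphi}\big)_{L^2},
\]
i.e.\ the functional $\mathbb L(\eta\bu)\in S^*(\bbR^d;k_r)$ is represented by the $L^2$ field ${\bf G}:=\eta{\bf f}+\cC_\eta\bu$.

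Finally I would invert on the Fourier side. Because $k_r$ is even and translation invariant, $\cF^{k_r}(\bv,{\bfs \vphi})=\int_{\bbR^d}\mathcal M_r(\xi)\,\widehat{\bv}(\xi)\cdot\overline{\widehat{{\bfs \vphi}}(\xi)}\,\mathrm{d}\xi$ with the symmetric positive semidefinite symbol $\mathcal M_r(\xi)=\int_{\bbR^d}k_r(\bz)\,(1-\cos(2\pi\xi\cdot\bz))\,\tfrac{\bz}{|\bz|}\otimes\tfrac{\bz}{|\bz|}\,\mathrm{d}\bz$ --- this is exactly the computation behind \autoref{lem:Korns-lemma}. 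The Korn equivalence proved there forces, as quadratic forms, $2C_1\mathcal M_r(\xi)\le|\xi|^{2s}I$ and $|\xi|^{2s}I\le 2C_2\mathcal M_r(\xi)+C_2\beta(r)I$, so that $\mathcal M_r(\xi)\ge\tfrac1{4C_2}|\xi|^{2s}I$, hence $\mathcal M_r(\xi)$ is invertible with $|\mathcal M_r(\xi)^{-1}|\le 4C_2|\xi|^{-2s}$, as soon as $|\xi|\ge R_0:=(2C_2\beta(r))^{1/(2s)}$ (and $R_0=0$ when $r=\infty$, since then $\beta\equiv0$). From $\mathcal M_r(\xi)\widehat{\eta\bu}(\xi)=\widehat{{\bf G}}(\xi)$ for a.e.\ $\xi$ --- which follows by Plancherel from the displayed identity --- we obtain $|\widehat{\eta\bu}(\xi)|\le 4C_2|\xi|^{-2s}|\widehat{{\bf G}}(\xi)|$ for $|\xi|\ge R_0$, and therefore
\[
\int_{\bbR^d}(1+|\xi|^2)^{2s}\,|\widehat{\eta\bu}(\xi)|^2\,\mathrm{d}\xi\le C(R_0)\,\|\eta\bu\|_{L^2}^2+C\,\|{\bf G}\|_{L^2}^2<\infty,
\]
so $\eta\bu\in\cL^{2s,2}(\bbR^d)=H^{2s}(\bbR^d;\bbR^d)$ with $|\eta\bu|_{H^{2s}}\le C(\eta)\big(\|\bu\|_{L^2}+\|{\bf f}\|_{L^2}+\|\cC_\eta\bu\|_{L^2}\big)\le C(\eta)\big(\|\bu\|_{H^s}+\|{\bf f}\|_{L^2}\big)$. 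The a priori estimate of \autoref{thm:SolutionofD0} (whose hypotheses hold for $k_r$ by Example~3) combined with the Poincar\'e--Korn inequality gives $\|\bu\|_{H^s}\le C\|{\bf f}\|_{L^2}$, and since $\eta\in C_c^\infty(\Omega)$ was arbitrary we conclude $\bu\in H^{2s}_{loc}(\Omega;\bbR^d)$ with $|\eta\bu|_{H^{2s}}\le C\|{\bf f}\|_{L^2}$.

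The step I expect to be the main obstacle is showing $\cC_\eta\bu\in L^2$. In the scalar fractional-Laplacian case one treats the commutator as a pseudodifferential operator of order $2s-1$, but here the kernel carries the merely measurable weight $m$ together with the projection $\tfrac{\bz}{|\bz|}\otimes\tfrac{\bz}{|\bz|}$, so that route is not available; the remedy is the algebraic splitting $\cC_\eta\bu=A\bu+R$, which absorbs the singular part of the commutator into the bounded pointwise multiplier $A(\bx)$, whose defining principal-value integral converges \emph{only} by virtue of the evenness of $k_r$ and $\Lambda$, while the remainder $R$ is controlled by the energy seminorm $[\bu,\bu]_{S(\bbR^d;k_r)}^{1/2}$ via Cauchy--Schwarz. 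A secondary point is that for truncated kernels ($r<\infty$) the symbol $\mathcal M_r$ is elliptic only at high frequencies, which is why the low-frequency term $\|\eta\bu\|_{L^2}$ must be carried along on the right-hand side.
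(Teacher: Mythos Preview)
Your proof is correct and follows essentially the same strategy as the paper: localise by a cutoff $\eta$, show the commutator $\cC_\eta\bu$ lies in $L^2$ via the splitting into a bounded matrix multiplier plus an energy-controlled remainder, and then invert the translation-invariant operator on $\bbR^d$ using the Fourier symbol. Your $A$ and $R$ are precisely the paper's $\mathbb{L}\eta$ and $-I_s(\bu,\eta)$, and your Fourier inversion with the high-frequency ellipticity of $\mathcal M_r$ derived from the Korn inequality is the content of the paper's \autoref{lem:lma-L2RegularityonRd}.

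The organizational differences are minor: you work through the truncated operators $\mathbb{L}_n$ and pass to the limit, whereas the paper derives the commutator identity directly at the level of the bilinear form $\cF^k$; and for the remainder $R$ you use a single Cauchy--Schwarz step with a uniform bound on $\int|\eta(\bx)-\eta(\by)|^2 k_r(\bx-\by)\,\mathrm{d}\by$, whereas the paper splits $I_s$ according to whether $\by\in\Omega$ or $\by\in\complement\Omega$. Your route is slightly more streamlined here. One small caveat: your claim that $A$ is supported in a neighbourhood of $\supp\eta$ holds only when $r<\infty$; for $r=\infty$ the matrix $A(\bx)$ is merely bounded (and decaying), but since $\bu$ is supported in $\Omega$ this is all you need for $\|A\bu\|_{L^2}\le\|A\|_\infty\|\bu\|_{L^2}$.
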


Our second regularity result corresponds to the case when ${\bf f} \in 
L^p_{\Omega}(\bbR^d;\mathbb{R}^{d})$ for $p \geq 2$. For this result, we only 
study  $\mathbb{L}$ corresponding to $m = 1$ and $\Lambda  = \mathbb{R}^{d}$. 
That is,  $k$ is the standard fractional kernel given by  $k(x,y) = 
|x-y|^{-d-2s}$. To separate this special operator from generic ones, we 
introduce the notation $(-\mathring{\mathbf{\Delta}})^{s}$ to denote the matrix 
operator. That is, 
\[
(-\mathring{\mathbf{\Delta}})^{s}{\bf u} = \emph{p.v.} \intdm{\mathbb{R}^{d}}{ 
\shapetensorbxy {\bdu(\bdx)-\bdu(\bdy)\over { |\bx-\by|^{d+2s}}}}{\bdy}. 
\]
Note that, here and above, we omit a normalizing constant depending on 
$s$ and $d$ in the intro-differential representation 
of the fractional Laplace-type operator $(-\mathring{\mathbf{\Delta}})^{s}$. We 
do not study the limit cases  $s\nearrow 1$ or $s\searrow 0$.

\begin{theorem}\label{thm:int-for-p}
Let $p \in [2,\infty)$,  ${\bf f} \in L^p_{\Omega}(\bbR^d)$ and  ${\bf u} \in 
S_{\Omega}(\bbR^d;k)$ be the unique weak solution to the Dirichlet problem 
\eqref{eq-InteriorRegularity-DirichletProblem} with $\mathbb{L}$ replaced by 
$(-\mathring{\mathbf{\Delta}})^{s}$.  
Then ${\bf u} \in \mathcal{L}^{2s,p}_{loc}(\Omega)$ provided
\begin{enumerate}
\item[a)] $2\leq p\leq 2^{*_{s}}$, where $2^{{*}_s} := {2d\over d-2s}$; 
\item [ ] or 
\item [b)] $p > 2^{*_{s}}$ and ${\bf u}\in L^{p}(\Omega;\mathbb{R}^{d})$. 
\end{enumerate}
\end{theorem}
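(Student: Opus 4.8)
The plan is to adapt to the matrix operator $(-\mathring{\mathbf{\Delta}})^{s}$ the commutator‑and‑bootstrap scheme of Biccari–Warma, the key structural input being that the Fourier symbol of $(-\mathring{\mathbf{\Delta}})^{s}$ is a positive‑definite matrix comparable to $|{\bfs \xi}|^{2s}\bbI$. First I would record the multiplier identity: writing ${\bf h}={\bf x}-{\bf y}$ one gets $\widehat{(-\mathring{\mathbf{\Delta}})^{s}{\bf u}}({\bfs \xi}) = M({\bfs \xi})\widehat{\bf u}({\bfs \xi})$ with $M({\bfs \xi}) = \intrd{\frac{{\bf h}\otimes{\bf h}}{|{\bf h}|^2}\frac{1-\cos(2\pi{\bf h}\cdot{\bfs \xi})}{|{\bf h}|^{d+2s}}}{{\bf h}}$, and by scaling and rotation invariance $M({\bfs \xi}) = |{\bfs \xi}|^{2s}\bigl(a\,\bbI + b\,\tfrac{{\bfs \xi}\otimes{\bfs \xi}}{|{\bfs \xi}|^2}\bigr)$ with $a>0$, $a+b>0$ depending only on $d,s$. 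Hence $|{\bfs \xi}|^{2s}M({\bfs \xi})^{-1}=\tfrac1a\bbI-\tfrac{b}{a(a+b)}\tfrac{{\bfs \xi}\otimes{\bfs \xi}}{|{\bfs \xi}|^{2}}$ and $|{\bfs \xi}|^{-2s}M({\bfs \xi})$ are matrices of Hörmander–Mikhlin multipliers, so for every $q\in(1,\infty)$ and every compactly supported ${\bf v}\in L^q(\bbR^d)$ one has $(-\Delta)^{s}{\bf v}\in L^q$ iff $(-\mathring{\mathbf{\Delta}})^{s}{\bf v}\in L^q$; combined with the equivalence $\mathcal{L}^{2s,q}=\{{\bf v}\in L^q:(-\Delta)^{s}{\bf v}\in L^q\}$ recalled above, this reduces the theorem to showing $\eta{\bf u}\in L^p$ and $(-\mathring{\mathbf{\Delta}})^{s}(\eta{\bf u})\in L^p$ for every $\eta\in C_c^{\infty}(\Omega)$.

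Next I would localize. By \autoref{thm:InteriorRegularity-L2MainTheorem} (applicable since $(-\mathring{\mathbf{\Delta}})^{s}$ has the form \eqref{form-k} with $m\equiv1$, $\Lambda=\bbR^d$, $r=\infty$), ${\bf u}\in H^{2s}_{loc}(\Omega;\bbR^d)$, so $(-\mathring{\mathbf{\Delta}})^{s}{\bf u}={\bf f}$ a.e. in $\Omega$ and $\zeta{\bf u}\in H^{2s}(\bbR^d)$ for every $\zeta\in C_c^\infty(\Omega)$. Fixing $\eta\in C_c^\infty(\Omega)$ and $\tilde\eta\in C_c^\infty(\Omega)$ with $\tilde\eta\equiv1$ near $\supp\eta$, one has
\[
(-\mathring{\mathbf{\Delta}})^{s}(\eta{\bf u}) = \eta\,(-\mathring{\mathbf{\Delta}})^{s}{\bf u} + \mathcal{C}_\eta{\bf u} = \eta{\bf f} + \mathcal{C}_\eta(\tilde\eta{\bf u}) + \mathcal{C}_\eta\bigl((1-\tilde\eta){\bf u}\bigr),
\]
where $\mathcal{C}_\eta=[(-\mathring{\mathbf{\Delta}})^{s},\eta]$ has principal‑value kernel $\frac{({\bf x}-{\bf y})\otimes({\bf x}-{\bf y})}{|{\bf x}-{\bf y}|^2}\frac{\eta({\bf x})-\eta({\bf y})}{|{\bf x}-{\bf y}|^{d+2s}}$. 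The piece $\mathcal{C}_\eta((1-\tilde\eta){\bf u})$ has a kernel dominated by $C|{\bf x}-{\bf y}|^{-d-2s}\chi_{\{|{\bf x}-{\bf y}|\ge c_0\}}$ with $c_0=\dist(\supp\eta,\supp(1-\tilde\eta))>0$, an $L^1$ convolution kernel, whence $\|\mathcal{C}_\eta((1-\tilde\eta){\bf u})\|_{L^q}\le C\|{\bf u}\|_{L^q}$. For $\mathcal{C}_\eta(\tilde\eta{\bf u})$ one uses that $\mathcal{C}_\eta$ is a pseudodifferential operator of order $2s-1<1$ — equivalently, extracting the leading term of $\eta({\bf x})-\eta({\bf y})$ and exploiting the oddness of ${\bf h}\mapsto({\bf h}\otimes{\bf h})(\nabla\eta\cdot{\bf h})$ renders its kernel locally of order $-(d+2s-1)$ — so $\mathcal{C}_\eta:\mathcal{L}^{2s-1,q}\to L^q$ is bounded for every $q\in(1,\infty)$; when $s<1/2$ this already follows from $|\eta({\bf x})-\eta({\bf y})|\le C|{\bf x}-{\bf y}|$.

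Then I would run the bootstrap. Set $q_0=2$ and $q_{k+1}=\min\{p,q_k^\ast\}$ with $q_k^\ast=\tfrac{dq_k}{d-q_k}$ (any finite exponent, with the obvious modification, once $q_k\ge d$). I claim $\eta{\bf u}\in\mathcal{L}^{2s,q_k}$ for all $\eta\in C_c^\infty(\Omega)$ and all $k$: the case $k=0$ is \autoref{thm:InteriorRegularity-L2MainTheorem}. Granting it for $k$, the Sobolev embedding $\mathcal{L}^{2s,q_k}\hookrightarrow\mathcal{L}^{2s-1,q_k^\ast}$ applied to $\tilde\eta{\bf u}$ and the mapping property above give $\mathcal{C}_\eta(\tilde\eta{\bf u})\in L^{q_k^\ast}$; since moreover ${\bf u}\in L^{q_{k+1}}$ (one always has ${\bf u}\in L^2$, and ${\bf u}\in L^p$ either by hypothesis (b) or, in case (a), because $p\le 2^{*_s}$ and ${\bf u}\in S_\Omega(\bbR^d;k)=H^{s}_{\Omega}(\bbR^d;\bbR^d)\hookrightarrow L^{2^{*_s}}$, so ${\bf u}\in L^r$ for all $r\in[2,p]\ni q_{k+1}$) and $\eta{\bf f}\in L^{q_{k+1}}$, the displayed identity yields $(-\mathring{\mathbf{\Delta}})^{s}(\eta{\bf u})\in L^{q_{k+1}}(\bbR^d)$ (its only slowly decaying part is $\eta{\bf f}$, supported in $\Omega$; the remaining pieces lie in $L^{q_k^\ast}$ near $\supp\eta$ and decay like $|{\bf x}|^{-d-2s}$ at infinity). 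Together with $\eta{\bf u}\in L^{q_{k+1}}$ and the reduction of the first paragraph, this gives $\eta{\bf u}\in\mathcal{L}^{2s,q_{k+1}}$ for every $\eta$. Since $\tfrac1{q_{k+1}}\le\tfrac1{q_k}-\tfrac1d$ as long as $q_{k+1}<p$, the sequence $q_k$ increases to $p$ in finitely many steps, which is the assertion.

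The step I expect to be the main obstacle is the bound $\mathcal{C}_\eta:\mathcal{L}^{2s-1,q}\to L^q$ in the regime $2s\ge1$: there the commutator kernel is more singular than Calderón–Zygmund and one must genuinely use the cancellation from the oddness of the leading term, either via pseudodifferential calculus or by a direct kernel decomposition followed by a fractional‑integration estimate. A secondary, more bookkeeping, difficulty is keeping the two indices (the smoothness drop $2s\mapsto 2s-1$ and the integrability gain) aligned through the iteration so that the Sobolev exponents match and the global $L^q$‑membership of ${\bf u}$ required for the far‑field term is available at every stage — which is precisely where the dichotomy $p\le 2^{*_s}$ versus $p>2^{*_s}$ with the extra assumption ${\bf u}\in L^p$ enters.
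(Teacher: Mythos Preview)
Your proposal follows the same overall scheme as the paper: localize with a cutoff, identify the commutator term, use the explicit matrix symbol $M({\bfs \xi})=|{\bfs \xi}|^{2s}(a\bbI+b\,{\bfs \xi}\otimes{\bfs \xi}/|{\bfs \xi}|^2)$ and the $L^p$-boundedness of Riesz transforms to get global $\mathcal{L}^{2s,q}$ regularity on $\bbR^d$, then bootstrap. The symbol computation and the role of the hypothesis ${\bf u}\in L^p$ in case~(b) match the paper exactly.

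The genuine difference is in the commutator estimate. The paper does not treat $\mathcal{C}_\eta$ as a pseudodifferential operator of order $2s-1$. Instead it writes the commutator as ${\bf g}=[(-\mathring{\bfs \Delta})^s\eta]{\bf u}-I_s({\bf u},\eta)$ with $I_s({\bf u},\eta)(\bx)=\int\frac{\eta(\bx)-\eta(\by)}{|\bx-\by|^{d+2s}}\mathcal{D}({\bf u})(\bx,\by)\frac{\bx-\by}{|\bx-\by|}\,\mathrm{d}\by$, and bounds $\|{\bf g}\|_{L^q}$ by $\|{\bf u}\|_{W^{s,q}(\omega)}+\|{\bf u}\|_{L^q(\Omega)}$ via the elementary H\"older split
\[
\frac{|\eta(\bx)-\eta(\by)|}{|\bx-\by|^{d+2s}}=\Bigl(\frac{|\eta(\bx)-\eta(\by)|^{q'}}{|\bx-\by|^{d+sq'}}\Bigr)^{1/q'}\Bigl(\frac{1}{|\bx-\by|^{d+sq}}\Bigr)^{1/q},
\]
with no cancellation used. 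The bootstrap then runs $W^{2s,q}\hookrightarrow W^{s,q'}$ with $1/q'=1/q-s/d$, gaining $s$ per step. Your route, via $\mathcal{C}_\eta:\mathcal{L}^{2s-1,q}\to L^q$ and $\mathcal{L}^{2s,q}\hookrightarrow\mathcal{L}^{2s-1,q^\ast}$ with $1/q^\ast=1/q-1/d$, is sharper (since $2s-1<s$) and iterates faster, but you correctly identify its cost: when $2s\ge 1$ the order-$(2s-1)$ bound genuinely needs the odd cancellation in the leading kernel term (or PDO calculus), whereas the paper's argument is entirely self-contained and works uniformly in $s\in(0,1)$ without this distinction.
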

As we describe in the introduction, to prove 
\autoref{thm:InteriorRegularity-L2MainTheorem} and \autoref{thm:int-for-p} we 
follow an argument used in \cite{Biccari-Warma}, where a similar but more 
general result is proved for the Dirichlet problem for fractional Laplacian 
equation when the right hand side comes from $L^p$ for any $1<p<\infty$.   
The argument relies on an optimal regularity result for weak solutions of the 
same system posed on the entire space. Multiplying the weak solution of the 
Dirichlet problem by a cutoff function, the product becomes a weak solution 
of a system of equations posed on $\mathbb{R}^{d}$ with a perturbed right hand 
side. The task is then to show that the perturbed force term lives in the same 
space as the original right hand side function.  In implementing the strategy 
of  \cite{Biccari-Warma}  to our case, although the cutoff function argument 
remains the same, we have to demonstrate the optimal regularity result for weak 
solutions of the strongly coupled system in $\mathbb{R}^{d}$.  
For strong solutions of nonlocal equations defined on $\mathbb{R}^{d}$, optimal regularity is obtain in \cite{DoKi12}. 

We should mention that, for the scalar case,  the result \cite[Theorem 1.4]{Biccari-Warma} 
does not require ${\bf u}$ be in $L^{p}(\Omega)$ for large $p$ as we do in 
 \autoref{thm:int-for-p}.  Roughly speaking, they 
prove that a solution to the Dirichlet problem of the fractional Laplacian with 
right hand side in $L^{p}$ must also be in $L^{p}$, see \cite[Lemma 
2.5]{Biccari-Warma}. A similar Calderon-Zygmund type estimate is also proved in 
\cite[Theorem 16]{Peral}. Unfortunately we are unable to extend their  proof to 
the vector-valued case because the argument in  \cite{Biccari-Warma} relies on 
a monotonicity property of an associated semigroup and in the case of 
\cite{Peral} it 
uses a Moser-type argument where a function that is a power of the solution is 
used as a test function. which we cannot do for systems.  

\subsection{Interior $H^{2s}$ Regularity for the Dirichlet Problem of the 
System of Equations}
Now we turn to the main point. 
We recall that for a given ${\bf f}\in L^{2}(\mathbb{R}^{d};\mathbb{R}^{d})$, 
we say that ${\bf u}\in S(\mathbb{R}^{d};k)$ is a weak solution to $
\mathbb{L}{\bf u} = {\bf f}
$ in $\mathbb{R}^{d}$ 
if 
\begin{equation}\label{weak-whole-space}
\langle \mathbb{L}{\bf u}, {\bfs \psi} \rangle := 
\iintdm{\mathbb{R}^{d} \, \mathbb{R}^{d}}{{\mathcal{D} ({\bf u})(\bdx, \bdy) 
\mathcal{D} ({\bfs \psi})(\bdx, \bdy)} k(\bdx-\bdy)}{\bx}{\by} = 
\intdm{\mathbb{R}^{d}}{{\bf f}(\bdx)\cdot {\bfs \psi}(\bdx)}{\bdx}, \quad \forall 
{\bfs \psi}\in C^{\infty}_{c}(\mathbb{R}^{d};\mathbb{R}^{d}). 
\end{equation}
The following lemma gives an optimal regularity result for weak solutions of 
the system. 
\begin{lemma}\label{lem:lma-L2RegularityonRd}
Assume that $k_r$ has the form \eqref{form-k}. Suppose that ${\bf f}\in 
L^{2}(\mathbb{R}^{d};\mathbb{R}^{d})$. Let ${\bf u}\in 
H^{s}(\mathbb{R}^{d};\mathbb{R}^{d})$ be a weak solution to the system of 
nonlocal equations 
\[
\mathbb{L}{\bf u} = {\bf f}, \quad\text{in $\mathbb{R}^{d}$}.
\]
Then ${\bf u} \in H^{2s}(\bbR^d;\mathbb{R}^{d})$, and $|{\bf u}|_{H^{2s}} \leq 
c \left(\|{\bf f}\|_{L^{2}} + \|{\bf u}\|_{L^{2}}\right)$ for some constant c 
depending only on $r$, $s$, $d$, and $\Lambda$.  
\end{lemma}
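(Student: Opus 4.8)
The plan is to exploit the translation invariance of $k_r$ so that $\mathbb{L}$ becomes a Fourier multiplier, and then to show that its matrix symbol $\mathbb{M}({\bfs\xi})$ satisfies $\mathbb{M}({\bfs\xi})\gtrsim|{\bfs\xi}|^{2s}$ (in the quadratic-form sense) for $|{\bfs\xi}|$ large; dividing by the symbol then yields the $H^{2s}$-estimate. First I would rewrite \eqref{weak-whole-space}: substituting ${\bf h}=\by-\bx$ and using Fubini, for ${\bfs\psi}\in C_c^\infty(\bbR^d;\bbR^d)$ one has $\langle\mathbb{L}{\bf u},{\bfs\psi}\rangle=\intdm{\bbR^d}{k_r({\bf h})\,(\tau_{\bf h}{\bf u},\tau_{\bf h}{\bfs\psi})_{L^2(\bbR^d)}}{{\bf h}}$, where $\tau_{\bf h}{\bf u}(\bx)=({\bf u}(\bx+{\bf h})-{\bf u}(\bx))\cdot\frac{{\bf h}}{|{\bf h}|}$ as in \autoref{lem:Korns-lemma}. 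Applying Parseval's identity exactly as there, with $\mathcal{F}(\tau_{\bf h}{\bf u})({\bfs\xi})=(e^{\imath 2\pi{\bfs\xi}\cdot{\bf h}}-1)\mathcal{F}({\bf u})({\bfs\xi})\cdot\frac{{\bf h}}{|{\bf h}|}$ and $|e^{\imath 2\pi{\bfs\xi}\cdot{\bf h}}-1|^2=2(1-\cos(2\pi{\bfs\xi}\cdot{\bf h}))$, the left side becomes $\intdm{\bbR^d}{\overline{\mathcal{F}({\bfs\psi})({\bfs\xi})}^{\intercal}\,\mathbb{M}({\bfs\xi})\,\mathcal{F}({\bf u})({\bfs\xi})}{{\bfs\xi}}$ with
\[
\mathbb{M}({\bfs\xi})=2\intdm{\bbR^d}{k_r({\bf h})\,\bigl(1-\cos(2\pi{\bfs\xi}\cdot{\bf h})\bigr)\left(\frac{{\bf h}}{|{\bf h}|}\otimes\frac{{\bf h}}{|{\bf h}|}\right)}{{\bf h}}.
\]
This integral converges since $1-\cos(t)\lesssim t^2$ near $t=0$ while $2s>0$ controls the tail when $r=\infty$; moreover $\mathbb{M}$ is continuous and, splitting at $|{\bf h}|=|{\bfs\xi}|^{-1}$, satisfies $|\mathbb{M}({\bfs\xi})|\leq C|{\bfs\xi}|^{2s}$. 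Since the bilinear form in \eqref{weak-whole-space} and ${\bfs\psi}\mapsto\intdm{\bbR^d}{{\bf f}\cdot{\bfs\psi}}{\bx}$ are both continuous on $S(\bbR^d;k_r)=H^s(\bbR^d;\bbR^d)$ (by \autoref{prop:WellDefinedLandF} and \autoref{lem:Korns-lemma}) and $C_c^\infty$ is dense there, \eqref{weak-whole-space} extends to all ${\bfs\psi}\in\mathcal{S}$; taking ${\bfs\psi}=\mathcal{F}^{-1}{\bfs\varphi}$ with ${\bfs\varphi}\in C_c^\infty$, and noting $\mathbb{M}\mathcal{F}({\bf u})\in L^1_{loc}$ (as $\mathbb{M}$ is locally bounded and $\mathcal{F}({\bf u})\in L^2$), we conclude $\mathbb{M}({\bfs\xi})\mathcal{F}({\bf u})({\bfs\xi})=\mathcal{F}({\bf f})({\bfs\xi})$ for a.e.\ ${\bfs\xi}$.

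The heart of the argument is the lower bound $\mathbb{M}({\bfs\xi})\geq c_0|{\bfs\xi}|^{2s}\mathbb{I}$, in the sense of quadratic forms, for $|{\bfs\xi}|$ large. For a unit vector ${\bfs\eta}$, using $k_r({\bf h})\geq\alpha_1|{\bf h}|^{-d-2s}\chi_{\Lambda_{B_r}}({\bf h})$ and the substitution ${\bf h}\mapsto{\bf h}/|{\bfs\xi}|$, under which the cone $\Lambda$ is invariant,
\[
{\bfs\eta}^{\intercal}\mathbb{M}({\bfs\xi}){\bfs\eta}\;\geq\;2\alpha_1|{\bfs\xi}|^{2s}\intdm{\Lambda\cap B_{r|{\bfs\xi}|}({\bfs 0})}{\frac{1-\cos\bigl(2\pi\frac{{\bfs\xi}}{|{\bfs\xi}|}\cdot{\bf h}\bigr)}{|{\bf h}|^{d+2s}}\Bigl|{\bfs\eta}\cdot\frac{{\bf h}}{|{\bf h}|}\Bigr|^2}{{\bf h}}.
\]
For $|{\bfs\xi}|\geq 1/r$ (every ${\bfs\xi}\neq{\bfs 0}$ when $r=\infty$) the domain contains $\Lambda\cap B_1({\bfs 0})$, so the right side is at least $2\alpha_1|{\bfs\xi}|^{2s}\widetilde\Psi\bigl(\frac{{\bfs\xi}}{|{\bfs\xi}|},{\bfs\eta}\bigr)$, where $\widetilde\Psi({\bfs\nu},{\bfs\eta}):=\intdm{\Lambda\cap B_1({\bfs 0})}{\frac{1-\cos(2\pi{\bfs\nu}\cdot{\bf h})}{|{\bf h}|^{d+2s}}\bigl|{\bfs\eta}\cdot\frac{{\bf h}}{|{\bf h}|}\bigr|^2}{{\bf h}}$. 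Exactly as for the map $\Psi$ in the proof of \autoref{lem:Korns-lemma}, $\widetilde\Psi$ is continuous on $\mathbb{S}^{d-1}\times\mathbb{S}^{d-1}$ and strictly positive: if $\widetilde\Psi({\bfs\nu},{\bfs\eta})=0$ then, since $\{{\bfs\nu}\cdot{\bf h}\in\mathbb{Z}\}$ is a Lebesgue-null set, ${\bfs\eta}\cdot\frac{{\bf h}}{|{\bf h}|}=0$ for a.e.\ ${\bf h}\in\Lambda\cap B_1({\bfs 0})$, which is impossible because the directions $\frac{{\bf h}}{|{\bf h}|}$ there fill the open set $\mathcal{I}\cup(-\mathcal{I})\subset\mathbb{S}^{d-1}$, which lies in no hyperplane through the origin. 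Hence $\widetilde\Psi\geq\widetilde\Psi_{\min}>0$, giving $\mathbb{M}({\bfs\xi})\geq c_0|{\bfs\xi}|^{2s}\mathbb{I}$ with $c_0=2\alpha_1\widetilde\Psi_{\min}$ whenever $|{\bfs\xi}|\geq 1/r$.

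It remains to assemble the estimate. For $|{\bfs\xi}|\geq 1/r$ the matrix $\mathbb{M}({\bfs\xi})$ is invertible with $\|\mathbb{M}({\bfs\xi})^{-1}\|\leq(c_0|{\bfs\xi}|^{2s})^{-1}$, so $\mathcal{F}({\bf u})=\mathbb{M}^{-1}\mathcal{F}({\bf f})$ yields $|{\bfs\xi}|^{2s}|\mathcal{F}({\bf u})({\bfs\xi})|\leq c_0^{-1}|\mathcal{F}({\bf f})({\bfs\xi})|$ there; by Plancherel,
\[
\intdm{\{|{\bfs\xi}|\geq 1/r\}}{|{\bfs\xi}|^{4s}|\mathcal{F}({\bf u})({\bfs\xi})|^2}{{\bfs\xi}}\;\leq\;c_0^{-2}\|{\bf f}\|_{L^2}^2 .
\]
On $\{|{\bfs\xi}|<1/r\}$ (empty if $r=\infty$) one simply bounds $|{\bfs\xi}|^{4s}\leq r^{-4s}$, so that contribution is at most $r^{-4s}\|{\bf u}\|_{L^2}^2$. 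Adding the two pieces,
\[
|{\bf u}|_{H^{2s}}^2=\intdm{\bbR^d}{|{\bfs\xi}|^{4s}|\mathcal{F}({\bf u})({\bfs\xi})|^2}{{\bfs\xi}}\;\leq\;c\bigl(\|{\bf f}\|_{L^2}^2+\|{\bf u}\|_{L^2}^2\bigr),
\]
with $c=c(\alpha_1,\alpha_2,r,s,d,\Lambda)$; in particular ${\bf u}\in H^{2s}(\bbR^d;\bbR^d)$, as claimed.

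I expect the main difficulty to be this uniform lower bound on the symbol at high frequencies. Two points need care: that truncating $k_r$ to $|{\bf h}|<r$ does not destroy ellipticity — handled by the rescaling ${\bf h}\mapsto{\bf h}/|{\bfs\xi}|$, which replaces the fixed ball $B_r$ by the dilating ball $B_{r|{\bfs\xi}|}$ — and, more substantively, that the cone geometry produces a genuine \emph{matrix} lower bound, not merely positivity in selected directions; this is the fractional Korn positivity, packaged in the strictly positive minimum $\widetilde\Psi_{\min}$ borrowed from \autoref{lem:Korns-lemma}. Turning the weak formulation into the pointwise symbol identity is by contrast routine, once one observes that ${\bf u}\in H^s$ makes $\mathbb{M}\mathcal{F}({\bf u})$ locally integrable so that the Fourier-side identity is meaningful.
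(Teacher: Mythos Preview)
Your proof is correct and shares the paper's core strategy: pass to the Fourier side to obtain the pointwise identity $\mathbb{M}({\bfs\xi})\mathcal{F}({\bf u})({\bfs\xi})=\mathcal{F}({\bf f})({\bfs\xi})$, then use the cone-based positivity (the $\Psi_{\min}$/$\widetilde\Psi_{\min}$ argument from \autoref{lem:Korns-lemma}) to get a matrix lower bound $\mathbb{M}({\bfs\xi})\gtrsim|{\bfs\xi}|^{2s}\mathbb{I}$.

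Where you differ is in handling the truncation to $B_r$. The paper decomposes the kernel as $k_r=k-\tilde{k}_r$, with $k$ supported on the full cone $\Lambda$ and $\tilde{k}_r$ on $\Lambda\cap\complement B_r$; the full-cone symbol $\mathbb{M}$ then enjoys exact scaling and the uniform lower bound for \emph{all} ${\bfs\xi}\neq{\bfs 0}$, while the tail contributes the error $\|\tilde{\mathbb{M}}_r({\bfs\xi})\hat{\bf u}\|_{L^2}\leq\beta(r)\|{\bf u}\|_{L^2}$ with $\beta(r)\to 0$ as $r\to\infty$, matching \eqref{equi-korn}. You instead keep the truncated symbol and split in frequency: on $\{|{\bfs\xi}|\geq 1/r\}$ the rescaling ${\bf h}\mapsto{\bf h}/|{\bfs\xi}|$ dilates $B_r$ to $B_{r|{\bfs\xi}|}\supset B_1$, restoring the lower bound; on $\{|{\bfs\xi}|<1/r\}$ you use the trivial bound $|{\bfs\xi}|^{4s}\leq r^{-4s}$. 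Your route is a touch more direct and self-contained; the paper's buys the explicit vanishing of the constant as $r\to\infty$ and reuses the $\Psi$ already set up in \autoref{lem:Korns-lemma}. Your justification of the pointwise symbol identity (density and local integrability of $\mathbb{M}\mathcal{F}({\bf u})$) and of the strict positivity of $\widetilde\Psi$ is in fact more detailed than the paper's.
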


\begin{remark}
Note that \cite{coz17} establishes very similar regularity results. 
\end{remark}

\begin{proof}
Let ${\bfs \psi}\in C^{\infty}_{c}(\mathbb{R}^{d};\mathbb{R}^{d})$. Then 
iterating the integral in \eqref{weak-whole-space} and changing variables  we 
have that 
\[
\intdm{\mathbb{R}^{d}}{k_r({\bf h})\intdm{\mathbb{R}^{d}}{ \left(({\bf u}(\bdx + 
{\bf h})- {\bf u}(\bdx)) \cdot \frac{{\bf h}}{|{\bf h}|} \right) \left(({\bfs 
\psi}(\bdx + {\bf h})- {\bfs \psi}(\bdx)) \cdot \frac{{\bf h}}{|{\bf h}|} 
\right) }{\bdx}}{\bh} = \intdm{\mathbb{R}^{d}}{{\bf f}(\bdx)\cdot {\bfs 
\psi}(\bdx)}{\bdx}.
\]
We apply the Fourier transform and Plancherel theorem to rewrite the above 
integral in the frequency space as 
\[
\intdm{\mathbb{R}^{d}}{ (\mathbb{M}_r({\bfs \xi}) \hat{{\bf u}} ({\bfs \xi})) 
\cdot \hat{{\bfs \psi}} ({\bfs \xi}) }{{\bfs \xi}} = 
\intdm{\mathbb{R}^{d}}{\hat{{\bf f}}({\bfs \xi})\cdot \hat{{\bfs \psi}}({\bfs 
\xi})}{{\bfs \xi}}, 
\]
where $\mathbb{M}_r({\bfs \xi})$ is the matrix of Fourier symbols given by 
\begin{equation*}
\mathbb{M}_r({\bfs \xi}) = \intdm{\mathbb{R}^{d}}{ k_r({\bf h}) (e^{\imath 2\pi 
{\bfs \xi} \cdot {\bf h}}  -1 )^{2} \frac{{\bf h}}{|{\bf h}|} \otimes 
\frac{{\bf h}}{|{\bf h}|}}{{\bf h}}.
\end{equation*}
We now use the density of the Fourier transform of 
$C_c^{\infty}(\mathbb{R}^{d};\mathbb{R}^{d})$ in $L^{2}(\mathbb{R}^{d}; 
\mathbb{R}^{d})$ to conclude that   
\begin{equation} 
\label{symbol-weak}
\mathbb{M}_r({\bfs \xi}) \hat{{\bf u}} ({\bfs \xi}) = \hat{{\bf f}}({\bfs 
\xi}), \quad \text{almost everywhere in $\mathbb{R}^{d}$.}
\end{equation}
Let us write $ k_r = k - \tilde{k}_r$ where $ k(\bz) = \frac{m({\bz})  
}{|\bz|^{d + 2s}}\chi_{\Lambda}(\bz).$ Notice that $\tilde{k}_r$ is supported 
outside of the ball $B_r$. If we denote the matrix of symbols by $\bbM$ and 
$\tilde{\bbM}_r$, we have that 
\begin{equation}\label{formula-for-matrix-symbol}
\mathbb{M}({\bfs \xi}) \hat{{\bf u}} ({\bfs \xi}) = \tilde{\bbM}_r({\bfs \xi}) 
\hat{{\bf u}} ({\bfs \xi})  + \hat{{\bf f}}({\bfs \xi}), \quad \text{almost 
everywhere in $\mathbb{R}^{d}$.}
\end{equation}
To estimate the relevant norms of ${\bf u}$, let us first estimate the 
eigenvalues of the matrix $\mathbb{M}({\bfs \xi})$. To that end, for any ${\bfs 
\eta}\in \mathbb{S}^{d-1}$, noting the form of $k$ we have that 
\[
\begin{split}
(\mathbb{M}({\bfs \xi}){\bfs \eta})\cdot{\bfs \eta}  =  \intdm{\mathbb{R}^{d}}{
k({\bf h}) (e^{\imath 2\pi {\bfs \xi} \cdot {\bf h}}  -1 )^{2}\left|{\bfs \eta} 
\cdot {{\bf h}\over |{\bf h}|}\right|^{2}}{\bh}
	&\geq  2\alpha_{1} 
\intdm{\Lambda}{ \frac{(1- \cos(2\pi {\bfs \xi}\cdot {\bf h}))}{|{\bf 
h}|^{d+2s}}\left|{\bfs \eta} \cdot {{\bf h}\over |{\bf h}|}\right|^{2}}{{\bf h}}
\\
&\geq 2\alpha_{1} \Psi_{min} |{\bfs \xi}|^{2s}\,,
\end{split}
\]
where the last inequality is from the proof of \autoref{lem:Korns-lemma}.  As 
a consequence the eigenvalues of the matrix function $|{\bfs \xi}|^{-2s} 
\mathbb{M}({\bfs \xi})$ are uniformly bounded from below by a positive number. 
We also note that since $\mathbb{M}({\bfs \xi})$ is symmetric and positive 
definite for each ${\bfs \xi}$, the eigenvalues of the square of 
$\mathbb{M}({\bfs \xi})$ are precisely the squares of the eigenvalues of 
$\mathbb{M}({\bfs \xi})$. It then follows that for any vector ${\bf w}$
\[
|\mathbb{M}({\bfs \xi}){\bf w}|^{2}  = \mathbb{M}({\bfs \xi}){\bf w}\cdot 
(\mathbb{M}({\bfs \xi}){\bf w}) = \mathbb{M}({\bfs \xi})\mathbb{M}({\bfs 
\xi}){\bf w}\cdot {\bf w} = |{\bf w}|^2\min_{\beta}\{\beta({\bfs \xi})^{2} \}\,,
\] 
where the minimum is taken over the eigenvalues $\beta({\bfs \xi})$ of 
$\mathbb{M}({\bfs \xi})$.  We conclude that there exists a positive number 
$\alpha_{0}$ that depends only on $\alpha_{1}, s, \Lambda$, such that for all 
vectors ${\bfs \xi}$ and ${\bf w}$ in $\mathbb{R}^{d}$ we have that 
\[
|\mathbb{M}({\bfs \xi}){\bf w}|^{2} \geq \alpha_{0}(|{\bfs \xi}|^{2s}|{\bf 
w}|)^{2}\,.
\]
We now easily see from \eqref{symbol-weak} that 
\[
\begin{split}
\Vnorm{\Dss {\bf u}}_{L^2(\bbR^d)}  = \||{\bfs \xi}|^{2s}\hat{{\bf 
u}}\|_{L^{2}}\leq \alpha_{0}  \|\mathbb{M}({\bfs \xi})\hat{{\bf u}}\|_{L^{2}}  
&= \alpha_{0}\left(\|\hat{\bf f}\|_{L^2(\bbR^d)} + + \|\tilde{\bbM}_r({\bfs 
\xi}) \hat{{\bf u}} ({\bfs \xi}) \|_{L^{2}}^2\right)\\
&\leq \alpha_0 \left(\|\hat{\bf f}\|_{L^2(\bbR^d)} + \beta(r) \|\hat{{\bf u}} 
\|_{L^{2}}^2\right)\,,
\end{split}
\]
where in the last inequality  $\beta(r)$ is from \autoref{lem:Korns-lemma}. 
Thus, since we already know that ${\bf u}\in 
L^{2}(\mathbb{R}^{d};\mathbb{R}^{d})$,  
we get that ${\bf u} \in \cL^{2s,2}$.
\end{proof}

\begin{proof}[Proof of \autoref{thm:InteriorRegularity-L2MainTheorem}]

Let $\Omega_{1}\Subset \Omega_2 \Subset \Omega$. Let $\eta \in 
C^{\infty}_c(\Omega_{2})$ be a real-valued function such that
\[
\eta(\bx) \equiv 1, \,\,\bx \in \Omega_{1},\quad \eta(\bx) \in [0,1], \,\bx \in 
\Omega_{2} \setminus \Omega_{1},\quad\text{and}\, \eta(\bx) = 0,  \,\,\bx \in 
\bbR^d \setminus \Omega_{2}.
\]

Let ${\bf f} \in L^2_{\Omega}(\bbR^d;\mathbb{R}^{d})$ and let ${\bf u} \in 
H_{\Omega}(\bbR^d;k)$ be the unique weak solution to the Dirichlet problem 
\eqref{eq-InteriorRegularity-DirichletProblem}. Notice that because of the form 
of $k$, ${\bf u}$ is in fact in 
$H_{\Omega}^{s}(\mathbb{R}^{d};\mathbb{R}^{d})$.  Now, it is clear that the 
function $\eta {\bf u} \in H_{\Omega}^{s}(\mathbb{R}^{d};\mathbb{R}^{d})$. 
Using the identity 
\[
\mathcal{D}({\bf u}\eta)(\bdy, \bdx) = (\eta(\bdx)-\eta(\bdy)) {\bf u}(\bdx) 
\cdot {\bdy-\bdx \over {|\bdy - \bdx|}}
+ \eta(\bx) \mathcal{D}({\bf u})(\bdx, \bdy) + (\eta(\bdy) - \eta(\bx)) 
\mathcal{D}({\bf u})(\bdx, \bdy) \]
we see that for every ${\bf v} \in C^{\infty}_c(\bbR^d;\mathbb{R}^{d})$, 
\begin{equation}\label{integration-by-p}
\cF^{k}(\eta {\bf u},{\bf v}) -  \cF^{k}({\bf u},\eta {\bf v}) = 
([\mathbb{L}\eta] {\bf u},{\bf v})_{L^2(\bbR^d)} - (I_s({\bf u},\eta),{\bf 
v})_{L^2(\bbR^d)}.
\end{equation}
where  for almost all $\bx \in \bbR^d$ the vector valued function is 
\[
I_s({\bf u},v)(\bdx) = \intdm{\bbR^d}{k(\by-\bx)(\eta({\bf x})-\eta({\bf 
y}))\mathcal{D}({\bf u})(\bdx, \bdy) {\bdx - \bdy \over |{\bf x}-\bdy|}} {\by}, 
\]
which is finite via H\"older's inequality.   In the above and hereafter we 
suppress the dependence of $k$ on $r$. The matrix valued function 
$\mathbb{L}{\eta} (\bdx)$ is given by 
\[
\mathbb{L}{\eta} (\bdx) = {\emph p.v.}\intdm{\mathbb{R}^{d}}{ 
k(\bx-\by)(\eta(\bx) - \eta({\bf y}) )\shapetensorbxy}{\by}.  
\]
Let us justify the $L^{2}$ inner products in the right hand side of 
\eqref{integration-by-p}.    
To that end, we introduce the vector field 
\[
{\bf g}:= (\mathbb{L}\eta){\bf u} - I_s({\bf u},\eta),
\]
and show that ${\bf g} \in L^2(\bbR^d;\mathbb{R}^{d})$. In fact, we also 
show that there exists a constant $C>0$ independent of ${\bf u}$ (but depends 
on $\eta$) such that
\begin{equation}\label{eq-L2RegularityProof-gEstimate}
\Vnorm{{\bf g}}_{L^2(\bbR^d)} \leq C \Vnorm{{\bf u}}_{H^{s}}.
\end{equation}
The rest of the argument is similar to that given in  \cite{Biccari-Warma} adjusted to the system case. 
We include it here fore clarity and completeness. 
We begin by noting that $\mathbb{L}{\eta}$ is uniformly bounded in $\mathbb{R}^{d}$. Indeed, 
using the fact that $\eta\in C_c(\mathbb{R}^{d})$ and $k$ is even, we can 
easily show that $\|\mathbb{L}{\eta}\|_{\infty} \leq C 
(\|D^{2}\eta\|_{L^{\infty}}  + \|\eta\|_{L^{\infty}}.  )$
As a consequence of this and the Poincar\'e-Korn inequality, since ${\bf u} \in 
H_{\Omega}^{s}(\mathbb{R}^{d}, \mathbb{R}^{d})$ we have 
\begin{equation}
\Vnorm{(\mathbb{L}\eta){\bf u}}^2_{L^2(\bbR^d)}  
\leq \Vnorm{\mathbb{L}\eta}^2_{L^{\infty}(\bbR^d)} \Vnorm{{\bf 
u}}^2_{L^2(\bbR^d)} \leq C |{\bf u}|^{2}_{H^{s}}.
\end{equation}
To bound the $L^2$ norm of  $I_s({\bf u},\eta)(\bx) $, we begin by breaking the 
region of integration as 
\begin{align*}
I_s({\bf u},\eta)(\bx) 
&= \intdm{\Omega}{k(\by-\bx)(\eta({\bf x})-\eta({\bf y}))\mathcal{D}({\bf 
u})(\bdx, \bdy) {\bdx - \bdy \over |{\bf x}-\bdy|}} {\by} \\ 
&\quad + \intdm{\bbR^d 
\setminus \Omega}{k(\by-\bx)(\eta({\bf x})-\eta({\bf y}))\mathcal{D}({\bf 
u})(\bdx, \bdy) {\bdx - \bdy \over |{\bf x}-\bdy|}} {\by} \\
&=\intdm{\Omega}{k(\by-\bx)(\eta({\bf x})-\eta({\bf y}))\mathcal{D}({\bf 
u})(\bdx, \bdy) {\bdx - \bdy \over |{\bf x}-\bdy|}} {\by} \\
& \quad + \eta({\bf x}) \intdm{\bbR^d \setminus 
\Omega}{k(\by-\bx)\mathcal{D}({\bf u})(\bdx, \bdy) {\bdx - \bdy \over |{\bf 
x}-\bdy|}} {\by} \\
&:= I_1(\bx) + I_2(\bx).
\end{align*}
Let us estimate the first integral $I_1(\bx)$. Using Cauchy-Schwarz,
\begin{align*}
|I_1(\bx)| 
\leq \left( \intdm{\Omega}{k(\by-\bx)|\eta({\bf x})-\eta({\bf y} )|^{2}} {\by} 
\right)^{1/2} \left( \intdm{\Omega}{k(\by-\bx)|\mathcal{D}({\bf u})(\bdx, 
\bdy)|^{2}} {\by} \right)^{1/2}. 
\end{align*}
Since $\Omega$ is bounded, taking $R= \text{diam}(\Omega),$ we see that for any 
$\bx\in \Omega$, $\Omega\subset B_{2R}(\bdx)$.  
We now use the fact that $\eta$ is smooth and the kernel is comparable with the 
fractional kernel to obtain that 
a constant $C >0$ depending on $\eta$ such that for any $\bdx \in \Omega$
\[
\intdm{\Omega}{k(\by-\bx)|\eta({\bf x})-\eta({\bf y} )|^{2}} {\by} \leq 
\|\nabla \eta\|_{L^{\infty}}\intdm{B_{2R}(\bdx)}{k(\by-\bx) |\bdy - 
\bdx|^2}{\by}= \|\nabla \eta\|_{L^{\infty}}\intdm{B_{2R}({\bfs 0})}{k({\bfs 
\xi}) |{\bfs \xi}|^2}{{\bfs \xi}} \leq C.
\]
For $\bx\in \complement \Omega$, we use the fact $\eta(\bdx) = 0$, and 
$\text{supp}(\eta) \subset \Omega_{2}$, and that $\delta = 
\text{dist}(\Omega_{2}, \partial  \Omega) >0$ to conclude that 
\[
\intdm{\Omega}{k(\by-\bx)|\eta({\bf x})-\eta({\bf y} )|^{2}} {\by}  = 
\intdm{\Omega_{2}}{k(\by-\bx)|\eta({\bf y} )|^{2}} {\by} \leq \| 
\eta\|^{2}_{L^{\infty}} \intdm{\{|{\bfs \xi}| > \delta\}} {k({\bfs \xi})}{{\bfs 
\xi}} \leq C.
\]
Using the two preceding estimates, we see that there exists a positive constant 
$C>0$, that depends on $\eta$ such that 
\begin{equation}\label{eq-L2RegularityProof-I1Estimate}
\intdm{\bbR^d}{|I_1(\bx)|^2}{\bx} \leq C \iintdm{ \bbR^d\, 
\Omega}{k(\by-\bx)|\mathcal{D}({\bf u})(\bdx, \bdy)|^{2}}{\by}{\bx} \leq C 
|{\bf u}|^2_{H^{s}}.
\end{equation} 
To estimate the $L^{2}$ norm of $I_{2}$, again using Cauchy-Schwarz we get that
\[
|I_2(\bx)|^{2} 
\leq |\eta({\bf x})|^{2} \left( \intdm{\bbR^d \setminus \Omega}{k(\by-\bx)} {\by} \right) \left( \, 
\intdm{\bbR^d \setminus\Omega}{k(\by-\bx)|\mathcal{D}({\bf u})(\bdx, 
\bdy)|^{2}} {\by}  \right).
\]
As before, since $\eta(\bdx) = 0$, and $\text{supp}(\eta) \subset \Omega_{2}$, 
and that $\delta = \text{dist}(\Omega_{2}, \partial  \Omega) >0$, we have that 
the function 
\[
\bdx\mapsto |\eta({\bf x})|^{2}\intdm{\bbR^d \setminus \Omega}{k(\by-\bx)} {\by}
\]
is bounded. Thus,
\begin{equation}\label{eq-L2RegularityProof-I2Estimate}
\intdm{\bbR^d}{|I_2(\bx)|^2}{\bx} = \intdm{\Omega_2}{|I_2(\bx)|^2}{\bx} \leq C 
\iintdm{\Omega_2 \, \complement \Omega}{k(\by-\bx)|\mathcal{D}({\bf u})(\bdx, 
\bdy)|^{2}} {\by} {\bx}
 \leq C|{\bf u}|^{2}_{H^{s}}.
\end{equation}
Therefore, the estimate \eqref{eq-L2RegularityProof-gEstimate} of ${\bf g}$ 
follows from \eqref{eq-L2RegularityProof-I1Estimate} and 
\eqref{eq-L2RegularityProof-I2Estimate}. 
We have shown that $\eta {\bf u}$ is a weak solution to the equation
$$
\mathbb{L}(\eta {\bf u}) = {\bf F} \quad \text{ in } \bbR^d,
$$
where ${\bf F} = \eta {\bf f} + (\mathbb{L}\eta){\bf u} - I_s({\bf u},\eta) \in 
L^2(\bbR^d;\mathbb{R}^{d})$. By \autoref{lem:lma-L2RegularityonRd}, $\eta {\bf 
u} \in H^{2s,2}(\bbR^d;\mathbb{R}^{d})$. Thus $\bu \in H^{2s,2}_{loc}(\Omega)$ 
and the proof is complete.
\end{proof}
\subsection{Interior $\mathcal{L}^{2s,p}$  Regularity for $p>2$}
In this section we prove Theorem \autoref{thm:int-for-p}.  As before we start 
with an optimal regularity estimate for the system of equations posed on 
$\mathbb{R}^{d}$. 
\begin{lemma}\label{lem:lma-LpRegularityonRd}
Let $p\in (1, \infty)$. For ${\bf f} \in L^p(\bbR^d;\mathbb{R}^{d})\cap 
L^2(\bbR^d;\mathbb{R}^{d})$, if  ${\bf u} \in H^{s}_{\Omega}(\mathbb{R}^{d}; 
\mathbb{R}^{d})\cap L^p(\bbR^d;\mathbb{R}^{d})$ is a weak solution of
$$
(-\mathring{\mathbf{\Delta}})^{s}{\bf u} = {\bf f} \quad \text{ in } \bbR^d.
$$
Then ${\bf u} \in \cL^{2s,p}(\bbR^d;\mathbb{R}^{d})$, Moreover, there exists a 
constant $C = C(d, s)>0$ such that 
\[
\|(-\mathring{\bfs \Delta})^s{\bf u}\|_{L^{p}} \leq C\|{\bf f}\|_{L^{p}}. 
\]
\end{lemma}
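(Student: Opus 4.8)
The plan is to reduce the lemma to an application of the Mihlin--Hörmander multiplier theorem, after inverting the matrix Fourier symbol of $(-\mathring{\mathbf{\Delta}})^{s}$. First I would pass to the Fourier side exactly as in the proof of \autoref{lem:lma-L2RegularityonRd}: testing $(-\mathring{\mathbf{\Delta}})^{s}{\bf u} = {\bf f}$ against $C_{c}^{\infty}(\mathbb{R}^{d};\mathbb{R}^{d})$, iterating the double integral, changing variables and using Plancherel, one obtains the pointwise identity $\mathbb{M}({\bfs \xi})\hat{{\bf u}}({\bfs \xi}) = \hat{{\bf f}}({\bfs \xi})$ for almost every ${\bfs \xi}$, where $\mathbb{M}$ is the matrix Fourier symbol of $(-\mathring{\mathbf{\Delta}})^{s}$ appearing in \autoref{lem:lma-L2RegularityonRd} (the case $m\equiv 1$, $\Lambda=\mathbb{R}^{d}$, $r=\infty$). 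This step is legitimate since ${\bf u}\in H^{s}(\mathbb{R}^{d};\mathbb{R}^{d})$ and ${\bf f}\in L^{2}$; moreover \autoref{lem:lma-L2RegularityonRd} applied with the same data (so that $\beta(r)=0$) already gives ${\bf u}\in H^{2s}(\mathbb{R}^{d};\mathbb{R}^{d})$, hence $(-\Delta)^{s}{\bf u}\in L^{2}$ and the manipulations below involving $(-\Delta)^{s}{\bf u}$ are justified a priori.

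Next I would use the explicit structure of $\mathbb{M}$. Since the kernel $|{\bf h}|^{-d-2s}\,\frac{{\bf h}}{|{\bf h}|}\otimes\frac{{\bf h}}{|{\bf h}|}$ is isotropic (covariant under ${\bf h}\mapsto R{\bf h}$ for rotations $R$), the symbol $\mathbb{M}$ is homogeneous of degree $2s$ and satisfies $\mathbb{M}(R{\bfs \xi}) = R\,\mathbb{M}({\bfs \xi})\,R^{\intercal}$ for every rotation $R$; a matrix-valued function on $\mathbb{S}^{d-1}$ with this covariance must, together with the homogeneity, be of the form
\begin{equation*}
\mathbb{M}({\bfs \xi}) = |{\bfs \xi}|^{2s}\Big(a_{s}\,\mathbb{I} + b_{s}\,\frac{{\bfs \xi}\otimes{\bfs \xi}}{|{\bfs \xi}|^{2}}\Big),\qquad a_{s}>0,\quad a_{s}+b_{s}>0,
\end{equation*}
for constants $a_{s},b_{s}$ depending only on $d,s$, where the two positivity assertions record that the eigenvalues $a_{s}$ (multiplicity $d-1$) and $a_{s}+b_{s}$ of $|{\bfs \xi}|^{-2s}\mathbb{M}({\bfs \xi})$ are bounded below by the positive constant $2\alpha_{1}\Psi_{min}$ produced in \autoref{lem:lma-L2RegularityonRd}. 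Consequently $\mathbb{M}({\bfs \xi})$ is invertible for ${\bfs \xi}\neq{\bfs 0}$ and the multiplier
\begin{equation*}
{\bf m}({\bfs \xi}):= |{\bfs \xi}|^{2s}\,\mathbb{M}({\bfs \xi})^{-1} = \frac{1}{a_{s}}\,\mathbb{I} - \frac{b_{s}}{a_{s}(a_{s}+b_{s})}\,\frac{{\bfs \xi}\otimes{\bfs \xi}}{|{\bfs \xi}|^{2}}
\end{equation*}
is a $d\times d$ matrix-valued function, homogeneous of degree $0$ and of class $C^{\infty}$ on $\mathbb{R}^{d}\setminus\{{\bfs 0}\}$; in particular each entry ${\bf m}_{ij}$ obeys the Mihlin--Hörmander derivative bounds $|\partial^{\gamma}{\bf m}_{ij}({\bfs \xi})|\leq C_{\gamma}|{\bfs \xi}|^{-|\gamma|}$ for all multi-indices $\gamma$.

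I would then conclude as follows. From $\mathbb{M}({\bfs \xi})\hat{{\bf u}} = \hat{{\bf f}}$ one reads off, componentwise, that $\widehat{(-\Delta)^{s}{\bf u}}({\bfs \xi}) = |{\bfs \xi}|^{2s}\hat{{\bf u}}({\bfs \xi}) = {\bf m}({\bfs \xi})\hat{{\bf f}}({\bfs \xi})$, i.e.\ $(-\Delta)^{s}{\bf u} = T_{{\bf m}}{\bf f}$, where $T_{{\bf m}}$ is the Fourier multiplier operator with symbol ${\bf m}$. Applying the scalar Mihlin--Hörmander theorem to each entry ${\bf m}_{ij}$ and summing, $T_{{\bf m}}$ is bounded on $L^{q}(\mathbb{R}^{d};\mathbb{R}^{d})$ for every $q\in(1,\infty)$, with norm depending only on $d,s$. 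Since ${\bf f}\in L^{2}\cap L^{p}$, picking ${\bf f}_{n}\in\mathcal{S}$ with ${\bf f}_{n}\to{\bf f}$ in $L^{2}\cap L^{p}$ and passing to an a.e.-convergent subsequence shows that the $L^{2}$- and $L^{p}$-realizations of $T_{{\bf m}}{\bf f}$ agree; combined with $(-\Delta)^{s}{\bf u} = T_{{\bf m}}{\bf f}$ in $L^{2}$ (valid since ${\bf u}\in H^{2s}$) this gives $(-\Delta)^{s}{\bf u}\in L^{p}$ with $\|(-\Delta)^{s}{\bf u}\|_{L^{p}}\leq C\|{\bf f}\|_{L^{p}}$, which is the content of the displayed bound in the statement (the fractional Laplacian acting componentwise). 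As ${\bf u}\in L^{p}$ by hypothesis, the equivalence $\cL^{2s,p}(\mathbb{R}^{d}) = \{{\bf u}\in L^{p}:(-\Delta)^{s}{\bf u}\in L^{p}\}$ then yields ${\bf u}\in\cL^{2s,p}(\mathbb{R}^{d};\mathbb{R}^{d})$.

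The main obstacle is the second step: identifying $\mathbb{M}$ well enough to know that ${\bf m}$ is a genuine matrix-valued Mihlin multiplier. Differentiating the integral representation of $\mathbb{M}$ directly is not absolutely convergent, so one must instead combine homogeneity with the rotational covariance to reduce $\mathbb{M}$ to the two-parameter form above; once that is in hand, the smoothness and degree-zero homogeneity of ${\bf m}$ — hence the $L^{p}$ bound — are immediate, and the $L^{2}$--$L^{p}$ consistency of the multiplier is routine.
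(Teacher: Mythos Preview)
Your proposal is correct and follows essentially the same route as the paper: pass to the Fourier side to obtain $\mathbb{M}({\bfs\xi})\hat{\bf u}=\hat{\bf f}$, identify $\mathbb{M}({\bfs\xi})=|{\bfs\xi}|^{2s}\big(a\,\mathbb{I}+b\,\frac{{\bfs\xi}\otimes{\bfs\xi}}{|{\bfs\xi}|^{2}}\big)$, invert explicitly, and conclude that $|{\bfs\xi}|^{2s}\hat{\bf u}$ is obtained from $\hat{\bf f}$ by a bounded $L^{p}$ matrix multiplier. The only cosmetic differences are that the paper computes the constants $a,b$ directly as integrals (rather than deducing the form from rotational covariance) and observes that the entries $\xi_{i}\xi_{j}/|{\bfs\xi}|^{2}$ are symbols of second-order Riesz transforms, so $L^{p}$-boundedness is immediate without invoking the full Mihlin--H\"ormander machinery.
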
 
\begin{proof}
We proceed as in the proof of \autoref{lem:lma-L2RegularityonRd} to obtain 
that in the Fourier space the equation is $
\mathbb{M}({\bfs \xi}) \hat{\bf u} ({\bfs \xi}) = \hat{\bf f}({\bfs \xi})
$ almost everywhere,
where $\mathbb{M}({\bfs \xi})$ is as given in \eqref{formula-for-matrix-symbol} 
with $k$ replaced by the kernel $\frac{1}{|\bx-\by|^{d+2s}}$. For the 
particular form of the kernel, we can explicitly compute the matrix of symbols. 
$\bbM ({\bfs \xi})$ is given by 
\[
\mathbb{M}({\bfs \xi}) = |{\bfs \xi}|^{2s} \left( \ell_1 \bbI + \ell_2 {{\bfs 
\xi}\over |{\bfs \xi}|}\otimes {{\bfs \xi}\over |{\bfs \xi}|} \right)\,,
\]
where $\ell_{1}$ and $\ell_{2}$ are positive numbers given by the formula 
$\ell_i = \int_{\mathbb{R}^{d}} \frac{1-\cos(2\pi h_1)}{|{\bf h}|^{d+2s}} 
{h_{i}^{2} \over |{\bf h}|^{2}} \, \mathrm{d}{\bf h},$ for $i=1, 2$, and 
$\mathbb{I}$ is the $d\times d$ identity matrix. 
The matrix $\ell_1 \bbI + \ell_2 {{\bfs \xi}\over |{\bfs \xi}|}\otimes {{\bfs 
\xi}\over |{\bfs \xi}|} $ is invertible for any ${\bfs \xi} \neq {\bfs 0}$, 
with 
\[
\left( \ell_1 \bbI + \ell_2 {{\bfs \xi}\over |{\bfs \xi}|}\otimes {{\bfs 
\xi}\over |{\bfs \xi}|} \right)^{-1} = \left( {1\over \ell_1} \bbI - 
{\ell_2\over \ell_1(\ell_1 + \ell_2)} {{\bfs \xi}\over |{\bfs \xi}|}\otimes 
{{\bfs \xi}\over |{\bfs \xi}|} \right) 
\]
Using this formula, we can rewrite $\mathbb{M}({\bfs \xi}) \hat{\bf u} ({\bfs 
\xi}) = \hat{\bf f}({\bfs \xi})$ as 
\[
|{\bfs \xi}|^{2s}\hat{\bu}({\bfs \xi}) =  \left( {1\over \ell_1} \bbI - 
{\ell_2\over \ell_1(\ell_1 + \ell_2)} {{\bfs \xi}\over |{\bfs \xi}|}\otimes 
{{\bfs \xi}\over |{\bfs \xi}|} \right) \hat{{\bf f}}({\bfs \xi})
\]
The conclusion of the lemma now follows from the assumption that ${\bf u}\in 
L^{p}(\mathbb{R}^{d}; \mathbb{R}^{d})$ and for any real numbers $a$ and $b$, 
the matrix multiplier
$a \bbI + b  {{\bfs \xi}\over |{\bfs \xi}|}\otimes {{\bfs \xi}\over |{\bfs 
\xi}|}
$
is a $L^p$-multiplier for any $1 < p < \infty$.  The latter follows immediately 
from the $L^p$-boundedness of the Riesz Transforms.

\end{proof}
We use this regularity theorem to prove the second main result of this 
section. Let begin by review the standard fractional Sobolev spaces.
For $p \in [1,\infty)$, $\Omega$ an open subset of $\mathbb{R}^{d}$ and $s \in 
(0,1)$, we define 
$$
W^{s,p}(\Omega;\mathbb{R}^{d}) := \left\{ {\bf u} \in L^p(\Omega) \, : \, 
\iintdm{\Omega \, \Omega}{\frac{|{\bf u}(\bx)-{\bf 
u}(\by)|^p}{|\bx-\by|^{d+sp}}}{\bx}{\by} < \infty \right\}. 
$$
With norm $\|{\bf u}\|^{p} = \|{\bf u}\|_{L^p} + \iintdm{\Omega \, 
\Omega}{\frac{|{\bf u}(\bx)-{\bf u}(\by)|^p}{|\bx-\by|^{d+sp}}}{\bx}{\by}$, it 
is well known that $W^{s,p}(\Omega;\mathbb{R}^{d})$ is a Banach space. 

If $s>1$, then let we write $s = m + \sigma $, where $m$ is the largest integer 
less than $s$, and define 
\[
W^{s,p}(\Omega;\mathbb{R}^{d}) = \{u\in W^{m, p}(\Omega;\mathbb{R}^{d}): 
D^{\alpha}{\bf u} \in W^{\sigma, p}(\Omega;\mathbb{R}^{d}),\quad |\alpha| = 
m\}. 
\]
With the norm $\|{\bf u}\|^{p} = \|{\bf u}\|_{W^{m,p}} + \sum_{|\alpha| = m} 
\|D^{\alpha}\|_{W^{\sigma, p}}$, the space $W^{s,p}(\Omega;\mathbb{R}^{d})$
is known to be a Banach space.  

We also need the Sobolev embedding result 
\[
W^{r, p}(\mathbb{R}^{d};\mathbb{R}^{d}) \hookrightarrow W^{s, 
q}(\mathbb{R}^{d};\mathbb{R}^{d}),\,\text{provided $0 < s\leq r$, $1 <p\leq 
q<\infty$, and $r- {d\over p} = s-{d\over q}$}
\]
If $\Omega$ is open and bounded with smooth enough boundary then 
\[
W^{r, p}(\Omega;\mathbb{R}^{d}) \hookrightarrow W^{s, 
q}(\Omega;\mathbb{R}^{d}),\,\text{provided $0 < s\leq r$, $1 <p\leq q<\infty$, 
and $r- {d\over p} \geq s-{d\over q}$}. 
\]
Let us recall the relation between the potential spaces and the fractional 
Sobolev spaces, \cite[Chapter~5, Theorem~5]{Stein}. For our purpose it suffices 
to recall 
\[
\mathcal{L}^{s, p}(\mathbb{R}^{d};\mathbb{R}^{d}) \subset 
W^{s,p}(\mathbb{R}^{d};\mathbb{R}^{d}), \,\text {if $p \geq 2$.}
\] 
For $p=2$, the spaces are the same; $H^{s}(\mathbb{R}^{d};\mathbb{R}^{d}) = 
\mathcal{L}^{s, 2}(\mathbb{R}^{d};\mathbb{R}^{d}) = W^{s, 
2}(\mathbb{R}^{d};\mathbb{R}^{d})$.

\begin{proof}[Proof of \autoref{thm:int-for-p}]
Let ${\bf f} \in L^p_{\Omega}(\bbR^d)$. Since $p \geq 2$, and $\Omega$ is 
bounded, ${\bf f} \in L^2_{\Omega}(\bbR^d)$. Therefore a unique weak solution 
${\bf u}$ in $H^{s}_{\Omega}(\mathbb{R}^{d})$ exists.  
Let $\eta$ be the cutoff function constructed in the proof 
of \autoref{thm:InteriorRegularity-L2MainTheorem}. Then we have that  $\eta 
{\bf u} \in H^{2s}(\mathbb{R}^{d};\mathbb{R}^{d}))= W^{2s,2}(\bbR^d)$. By 
Sobolev Embedding, $\eta {\bf u} \in W^{s,2^{*_{s}}}(\bbR^d;\mathbb{R}^{d})$.
Now, let $\omega_1$, $\omega_2$ be open sets such that $\omega \Subset \omega_1 
\Subset \omega_2 \Subset \Omega$. The cutoff function $\eta$ was arbitrary, so 
therefore ${\bf u} \in W^{s,2^{*_{s}}}(\omega_2;\mathbb{R}^{d})$. 

{\bf Part a.} ($p \leq 2^{*_{s}}$) Since $\omega_2$ is bounded and $p \leq 2^{*_{s}}$, again by Sobolev Embedding  
${\bf u} \in W^{s,p}(\omega_2;\mathbb{R}^{d})$. 
Moreover, since ${\bf u}\in H^{s}_{\Omega}(\mathbb{R}^{d};\mathbb{R}^{d})$, we 
have that ${\bf u}\in W^{s;2}(\Omega) \hookrightarrow L^p_{\Omega}(\bbR^d)$.  
In summary, ${\bf u} \in H^{s}(\mathbb{R}^{d};\mathbb{R}^{d}) \cap 
W^{s,p}(\omega_2;\mathbb{R}^{d}) \cap L^p_{\Omega}(\bbR^d;\mathbb{R}^{d})$. 
Now we proceed as in the proof of 
\autoref{thm:InteriorRegularity-L2MainTheorem}. With the same reasoning, 
$\eta{\bf u} $ is a weak solution of 
\[
(-\mathring{{\bfs \Delta}})^{s}(\eta {\bf u}) = \mathbf{F} \quad \text{in 
$\mathbb{R}^{d}$}, 
\]
where $\mathbf{F} = \eta {\bf f} + ((-\mathring{{\bfs \Delta}})^{s}\eta){\bf u} 
- I_s({\bf u},\eta).$  
Let ${\bf g} := ((-\mathring{{\bfs \Delta}})^{s}\eta){\bf u} - I_s({\bf 
u},\eta)$. We have shown already that ${\bf g}\in 
L^{2}(\mathbb{R}^{d};\mathbb{R}^{d})$.  Noting that $\eta{\bf u}\in 
H^{s}(\mathbb{R}^{d};\mathbb{R}^{d})\cap L^p_{\Omega}(\bbR^d;\mathbb{R}^{d})$, 
we can now apply \autoref{lem:lma-LpRegularityonRd} to conclude that 
$\eta{\bf u}\in \mathcal{L}^{2s, p}(\mathbb{R}^{d};\mathbb{R}^{d})$ provided we 
successfully show 
${\bf g} \in L^p(\bbR^d;\mathbb{R}^{d})$. In fact, we demonstrate that for 
some constant $C>0$ independent of ${\bf u}$ 
\begin{equation}\label{eq-LpRegularityProof-gEstimate}
\Vnorm{{\bf g}}_{L^p(\bbR^d)} \leq C (\Vnorm{{\bf 
u}}_{W^{s,p}(\omega_2;\mathbb{R}^{d})} + \Vnorm{{\bf 
u}}_{L^p(\Omega;\mathbb{R}^{d})}).
\end{equation}
The last estimate follows from a similar argument as in 
\autoref{thm:InteriorRegularity-L2MainTheorem}. We sketch its proof. More 
detail can be found in \cite{Biccari-Warma}.  
As before, the matrix function $(-\mathring{{\bfs \Delta}})^{s}\eta \in 
L^{\infty}(\bbR^d)$. Thus,
\begin{align*}
\Vnorm{((-\mathring{{\bfs \Delta}})^{s}\eta){\bf u}}^p_{L^p(\bbR^d)} &= 
\intdm{\bbR^d}{\left| \intdm{\bbR^d}{\frac{\eta(\bx)-\eta(y)}{|\bx-\by|^{d+2s}} 
\shapetensorbxy}{\by} \, {\bf u}(\bx) \right|^p}{\bx} \\
&\leq \Vnorm{(-\mathring{{\bfs \Delta}})^{s}\eta}^p_{L^{\infty}(\bbR^d)} 
\Vnorm{{\bf u}}^p_{L^p(\Omega)}.
\end{align*}
The second term $I_s({\bf u},\eta)$ can also be dealt with in the same way as 
in the proof of \autoref{thm:InteriorRegularity-L2MainTheorem}. We begin by 
breaking the integral as 
\begin{align*}
I_s({\bf u},\eta)(\bx) &= 
\intdm{\omega_1}{\frac{\eta(\bx)-\eta(\by)}{|\bx-\by|^{d+2s}} \mathcal{D}{\bf 
u}(\bdx, \bdy)}{\by} +  \eta(\bx) \intdm{\bbR^d \setminus 
\omega_1}{\frac{\mathcal{D}({\bf u})(\bdx, \bdy)}{|\bx-\by|^{d+2s}}  }{\by} := 
I_1(\bx) + I_2(\bx).
\end{align*}
We estimate $I_1(\bx)$: Using Holder's inequality with conjugate $p' = p/(p-1)$,
\begin{align*}
|I_1(\bx)| &\leq \intdm{\omega_1}{\frac{|\eta(\bx)-\eta(\by)|}{|\bx-\by|^{d+2s}} 
\diffqbunorm}{\by} \\
&\leq \left( 
\intdm{\omega_1}{\frac{|\eta(\bx)-\eta(\by)|^{p'}}{|\bx-\by|^{d+sp'}}}{\by} 
\right)^{1/p'} \left( \intdm{\omega_1}{\diffqbunorm^p 
\frac{1}{|\bx-\by|^{d+sp}}}{\by} \right)^{1/p},
\end{align*}
from which we get that
\begin{equation}\label{eq-LpRegularityProof-I1Estimate}
\begin{split}
\intdm{\bbR^d}{|I_1(\bx)|^p}{\bx} 
\leq C \left( \Vnorm{{\bf u}}^p_{W^{s,p}(\omega_2)} + \Vnorm{{\bf u}}^p_{L^p(\Omega)} \right).
\end{split}
\end{equation}
Similarly, we also get that 
\begin{equation}\label{eq-LpRegularityProof-I2Estimate}
\begin{split}
\intdm{\bbR^d}{|I_2(\bdx)|^p}{\bdx} 
\leq C \Vnorm{{\bf u}}^p_{L^p(\Omega)}.
\end{split}
\end{equation}
Therefore, the estimate \eqref{eq-LpRegularityProof-gEstimate} of ${\bf g}$ follows from \eqref{eq-LpRegularityProof-I1Estimate} and \eqref{eq-LpRegularityProof-I2Estimate}.  

{\bf Part b.} ($p > 2^{*_{s}}$)  From Part a) we have that ${\bf u}\in W^{2s, 2^{*_{s}}}_{loc}(\Omega;\mathbb{R}^{d})$, and so ${\bf u}\in W^{2s, 2^{*_{s}}}(\omega_2;\mathbb{R}^{d})$. By Sobolev Embedding we have that ${\bf u}\in W^{s, q_1}(\omega_2;\mathbb{R}^{d})$ with $q_1 = \min\left\{p, \frac{N 2^{*_{s}}}{N-s 2^{*_{s}}}\right\} = \min\left\{p, \frac{2N}{N-4s}\right\}$. By assumption ${\bf u} \in L^{q_1}(\Omega;\mathbb{R}^{d})$. With this information, we can now repeat the argument in Part a) to conclude that ${\bf u}\in W^{2s, q_1}_{loc}(\Omega;\mathbb{R}^{d})$. Now, if $2\leq p\leq \frac{2N}{N-4s},$ the proof is completed. Otherwise we iterate the above procedure to obtain ${\bf u}\in W^{2s, q_j}_{loc}(\Omega;\mathbb{R}^{d})$ with $q_j = \min\left\{p, \frac{2N}{N-js}\right\}$, for all $j\geq 2.$ For $p\geq 2^{*_{s}}$ , we can now choose $j \in \mathbb{N}$ such that $2\leq p \leq  \frac{2N}{N-js}$.  That completes the proof of the theorem. 
\end{proof}


\end{document}